\numberwithin{equation}{section}
\theoremstyle{plain} 
\newtheorem{theorem}{Theorem}[section]
\newtheorem{lemma}[theorem]{Lemma}
\newtheorem{prop}[theorem]{Proposition}
\theoremstyle{definition}
\newtheorem{definition}[theorem]{Definition}
\newtheorem{conjecture}[theorem]{Conjecture}
\theoremstyle{remark}
\newtheorem{remark}[theorem]{Remark}
\newcommand{\CP}{\mathbb{C}P}
\newcommand{\N}{\mathbb{N}}
\newcommand{\R}{\mathbb{R}}
\newcommand{\C}{\mathbb{C}}
\newcommand{\Z}{\mathbb{Z}}
\newcommand{\Q}{\mathbb{Q}}
\newcommand{\PP}{\mathbb{P}}
\newcommand{\bz}{\boldsymbol{z}}
\newcommand{\bm}{\boldsymbol{m}}
\newcommand{\bn}{\boldsymbol{n}}
\newcommand{\bt}{\boldsymbol{t}}
\newcommand{\PT}{\mathbb{P}^1_{3,3,3}}
\newcommand{\PH}{\mathbb{P}^1_{2,3,6}}
\newcommand{\PQ}{\mathbb{P}^1_{2,4,4}}
\newcommand{\dbar}{\overline{\partial}}
\newcommand{\CM}{\mathcal{M}}
\newcommand{\WT}[1]{\widetilde{#1}}
\newcommand{\OL}[1]{\overline{#1}}
\newcommand{\CO}{\mathcal O}
\newcommand{\da}[2]{\Delta^{{#1}/{#2}}_{1}}
\newcommand{\db}[2]{\Delta^{{#1}/{#2}}_{2}}
\newcommand{\dc}[2]{\Delta^{{#1}/{#2}}_{3}}
\newcommand{\ta}[2]{t_{1, \frac{#1}{#2}}}
\newcommand{\tb}[2]{t_{2, \frac{#1}{#2}}}
\newcommand{\tc}[2]{t_{3, \frac{#1}{#2}}}
\newcommand{\pt}{[\rm{pt}]}
\begin{document}
\author[Hong]{Hansol Hong}
\address{Department of Mathematical Sciences\\ Seoul National University\\ Gwanak-ro 1\\ Gwanak-gu \\Seoul 151-747\\ Korea}
\email{hansol84@snu.ac.kr, hansol84@gmail.com}
\author[Shin]{Hyung-Seok Shin}
\address{Department of Mathematical Sciences\\ Seoul National University\\ Gwanak-ro 1\\ Gwanak-gu \\Seoul 151-747\\ Korea}
\email{shs83@snu.ac.kr}

\title{On quantum cohomology ring of elliptic $\mathbb{P}^1$ orbifolds }
\maketitle
\begin{abstract}
We compute  quantum cohomology rings of elliptic $\mathbb{P}^1$ orbifolds via orbi-curve counting.
The main technique is the classification theorem which relates holomorphic orbi-curves with
certain orbifold coverings. The countings of orbi-curves are related to the integer solutions of Diophantine equations.
This reproduces the computation of Satake and Takahashi in the case of  $\PP^1_{3,3,3}$ via different method.
\end{abstract}

\tableofcontents

\section{Introduction}

The theory of holomorphic curves has been a great tool to understand the geometry of a symplectic manifold. The quantum cohomology ring plays an important role in Hamiltonian dynamics and mirror symmetry. Quantum cohomology counts J-holomorphic spheres inside a symplectic manifold which intersect three given (co)cycles. As the counting also includes constant holomorphic spheres, quantum cohomology deforms the classical cup product on the singular cohomology ring.

Later, Chen and Ruan \cite{CR2} defined the quantum cohomology for symplectic orbifolds. Their orbifold quantum cohomology ring captures stringy phenomenon in the sense that it also includes twisted sectors of a given orbifold as well as usual cocycles on the underlying space of the orbifold. In order to do this, one also allows the domain curves to have oribfold singularities. That is, one should count holomorphic {\em orbi-spheres} as well.

The main objects which we will study throughout the paper are one of simplest types of such orbifolds, spheres with three cone points called the {\em orbifold projective lines}. The orbifold quantum cohomology of these spheres indeed have a richer structure than that of the ordinary smooth sphere, since one additionally considers interaction among three singular points through J-holomorphic orbi-spheres as mentioned. We shall briefly review the orbifold quantum cohomology in Section \ref{sec:GWtheoryorb}.

Orbifold projective lines of our interest in this paper are those which admit elliptic curves as their manifold covers, and have three singular points $z_1$, $z_2$ and $z_3$. There are three such orbifold projective lines, $\mathbb{P}^1_{3,3,3}$, $\mathbb{P}^1_{2,3,6}$ and $\mathbb{P}^1_{2,4,4}$, where subindices indicate the orders of singularities at three orbifold points $z_i$. We will call these {\em elliptic orbifold projective lines}. (We remark that there is in fact one more orbifold projective line which is a quotient of elliptic curve, that is the orbifold projective line $\mathbb{P}^1_{2,2,2,2}$ with four $\Z_2$-singular points, which will not be considered in this paper.)

Recently, Satake-Takahashi \cite{ST} computed the full genus-0 Gromov-Witten potential for $\mathbb{P}^1_{3,3,3}$ and $\mathbb{P}^1_{2,2,2,2}$ making use of algebraic argument (for e.g. WDVV-equations). Furthermore, Krawitz-Shen \cite{KS} independently computed the potentials for $\PT$, $\PH$ and $\PQ$ even for all genera and proved the Landau-Ginzburg/Calabi-Yau correspondences for these examples.

We would like to mention that our method is different from theirs. Namely, our purpose here is to reproduce the (small) quantum product terms of the potential by directly counting holomorphic orbi-spheres. For this we will classify all holomorphic orbi-spheres with three markings in Section \ref{subsec:orbsphorbcover}. Interestingly, we find that these orbi-spheres have an one-to-one correspondence with the solutions of certain {\em Diophantine equations} depending on the lattice structures on the universal covers of orbifold projective lines constructed from the preimages of three singular points. 


The quantum product on $\PT$ is related to the Diophantine equation $Q_F (a,b):=a^2 -ab + b^2=d$.

\begin{theorem}\label{thm:main1}
For $\PT$, the only nontrivial 3-fold Gromov-Witten invariants are
\begin{align}
\label{eq:mainthmPT123} \langle \Delta_1^{1/3}, \Delta_2^{1/3}, \Delta_3^{1/3} \rangle_{0,3}^{\PT} \\
\label{eq:mainthmPT111}
\langle \Delta_i^{1/3}, \Delta_i^{1/3}, \Delta_i^{1/3} \rangle_{0,3}^{\PT}
\end{align}
for $i=1,2,3$, where subindices of $\Delta$ indicate the singular points $z_i$. If one denotes the moduli space of degree-$d$ holomorphic orbi-spheres contributing to \eqref{eq:mainthmPT123} by $\mathcal{M}_{0,3,d} (\PT;\Delta_1^{1/3}, \Delta_1^{1/3}, \Delta_1^{1/3})$ and that for \eqref{eq:mainthmPT111} by $\mathcal{M}_{0,3,d} (\PT;\Delta_i^{1/3}, \Delta_i^{1/3}, \Delta_i^{1/3})$, then 
$$\# \mathcal{M}_{0,3,d} (\PT;\Delta_1^{1/3}, \Delta_2^{1/3}, \Delta_3^{1/3}) = \frac{1}{6}\# \{ (a,b) : Q_F (a,b) =d, \,\,d \equiv 1 \,\,({\rm mod} \,3) \}$$
$$\# \mathcal{M}_{0,3,d} (\PT;\Delta_1^{1/3}, \Delta_1^{1/3}, \Delta_1^{1/3}) = \frac{1}{3}\# \{ (a,b) : Q_F (a,b) =d, \,\, d \equiv 0 \,\, ({\rm mod} \,3) \} $$
\end{theorem}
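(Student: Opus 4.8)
The plan is to combine the classification of holomorphic orbi-spheres from Section~\ref{subsec:orbsphorbcover} with the arithmetic of the Eisenstein integers $\Z[\omega]$, $\omega = e^{2\pi i/3}$. Recall that $\PT$ is the global quotient $E/(\Z/3)$, where $E = \C/\Z[\omega]$ is the hexagonal elliptic curve and $\Z/3$ acts by $z \mapsto \omega z$, with three fixed points $p_1, p_2, p_3$ descending to the cone points $z_1, z_2, z_3$. First I would use the classification theorem to lift any orbi-sphere contributing to \eqref{eq:mainthmPT123} or \eqref{eq:mainthmPT111} to a holomorphic self-map of the universal cover $\C$; being holomorphic and equivariant for the deck group (a wallpaper group factoring through $E$), this lift is forced to be affine, $\tilde u(z) = \alpha z + \beta$, with $\alpha \in \Z[\omega]$ and $\beta$ one of the three fixed points. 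Writing $\alpha = a + b\omega$, the mapping degree of the induced orbifold map equals the norm $N(\alpha) = a^2 - ab + b^2 = Q_F(a,b)$, so that degree-$d$ orbi-spheres correspond precisely to $\alpha$ with $Q_F(a,b) = d$.

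The second step is to read off the behaviour at the cone points by reducing modulo the ramified prime $\mathfrak{p} = (1-\omega)$, for which $\Z[\omega]/\mathfrak{p} \cong \mathbb{F}_3$. The action of $\alpha$ on the group of fixed points $\tfrac{1}{1-\omega}\Z[\omega]/\Z[\omega] \cong \Z/3$ is multiplication by the residue $\bar\alpha \in \mathbb{F}_3$, and the congruence $Q_F(a,b) \equiv (a+b)^2 \pmod 3$ then yields the dichotomy: $d \equiv 1 \pmod 3$ forces $\bar\alpha \neq 0$, so the cone points are permuted bijectively, which is the configuration computing \eqref{eq:mainthmPT123}; while $d \equiv 0 \pmod 3$ forces $\bar\alpha = 0$, so the cone-point map is constant and all three markings land on a single $z_i$, computing \eqref{eq:mainthmPT111}. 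The residue $d \equiv 2 \pmod 3$ never occurs, consistently with the absence of solutions. Combined with the age selection rule for the Calabi--Yau orbifold $\PT$, which requires the three twisted insertions to have ages summing to $1$, this is what I would use to argue that \eqref{eq:mainthmPT123} and \eqref{eq:mainthmPT111} exhaust the nontrivial three-point invariants.

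It then remains to count, for fixed $d$, the isomorphism classes of orbi-spheres realizing the prescribed incidences. Imposing $\tilde u(p_j) = p_j$ in the distinct case (resp. $\tilde u(p_j) = p_i$ for all $j$ in the coincident case) pins $\beta$ down to $0$ (resp. to $p_i$) and restricts $\alpha$: in the distinct case only the half of the solution set with $\bar\alpha = 1$ is compatible with the ordered labelling $x_j \mapsto z_j$, whereas in the coincident case every solution (all having $\bar\alpha = 0$) is compatible. The residual ambiguity is the $\Z/3$ of automorphisms of the domain orbi-sphere coming from its deck transformation $z \mapsto \omega z$, acting by $\alpha \mapsto \omega^k \alpha$; this action is free on the nonzero solution set, since $\omega^k \alpha = \alpha$ would force $\alpha = 0$, so its orbits have size exactly $3$. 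Dividing accordingly produces $\tfrac12 \cdot \tfrac13 = \tfrac16$ of $\#\{(a,b) : Q_F(a,b)=d\}$ in the distinct case and $\tfrac13$ of the same count in the coincident case. The main obstacle I anticipate is not this free quotient but the preceding moduli-theoretic identification: verifying, through the classification theorem, that the orbifold stable maps (with their twisted-sector/age data and representability constraints) correspond bijectively to these affine data with automorphism group exactly $\Z/3$ — in particular that the hyperelliptic-type unit $-1$ permutes the target cone points rather than stabilizing a map, so that no spurious extra automorphism inflates the stabilizers.
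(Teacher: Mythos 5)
Your proposal follows essentially the same route as the paper's proof: lift the orbi-sphere to the universal cover, identify the lift with multiplication by an Eisenstein integer $\lambda=a+b\tau$ whose norm $Q_F(a,b)$ is the degree, read off the orbi-insertions from the residue of $\lambda$ modulo the ramified prime $(1-\tau)$, and quotient by the symmetries. The paper does the residue computation by explicitly tracking the images of $\tfrac{1+2\tau}{3}$ and $\tfrac{2+\tau}{3}$ and using $Q_F(a,b)\equiv (a+b)^2 \pmod 3$, which is your $\mathbb{F}_3$-reduction in different clothing; and your bookkeeping of $\tfrac12\cdot\tfrac13$ versus $\tfrac13$ agrees with the paper's quotient by $\Z_6=\langle 1+\tau\rangle$ versus $\Z_3$, including the observation that $-1$ transposes $w_2$ and $w_3$ rather than stabilizing a map.

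The one genuine omission concerns the ``only nontrivial invariants'' claim. Your age selection rule plus the mod-$3$ dichotomy correctly excludes mixed twisted insertions such as $\langle \da{1}{3},\db{1}{3},\db{1}{3}\rangle$, but the Gromov--Witten invariant is an integral over $\OL{\CM}_{0,3,d}$, whose boundary a priori contains nodal stable maps (with a sphere or bad football bubble carrying positive energy) and configurations in which one marked point is smooth; the affine-lift classification simply does not apply to those, so they must be excluded separately. The paper does this in Lemma \ref{lem:nodal}, by lifting the two-orbi-point bubble component to the elliptic curve and invoking $\pi_2(E)=0$, together with an orbifold fundamental group argument for the non-good bubbles $\PP^1_{mp,mq}$. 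Some such step is needed before your count of affine data can be equated with the full invariant; everything else in your outline matches the paper's argument.
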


Similarly, solutions the Diophantine equation $Q_G (a,b):=a^2 + b^2=d$ are assigned to holomorphic orbi-spheres in $\PQ$.

\begin{theorem}\label{thm:main2}
For $\PQ$, the nontrivial contributions to 3-fold Gromov-Witten invariants come only from the moduli spaces
$$\mathcal{M}_{0,3,d} (\PQ; \da{1}{2}, \Delta_j^{1/4}, \Delta_k^{1/4} ), \mathcal{M}_{0,3,d} (\PQ; \Delta_j^{2/4}, \Delta_j^{1/4}, \Delta_j^{1/4} ), \mathcal{M}_{0,3,d} (\PQ; \Delta_j^{2/4}, \Delta_k^{1/4}, \Delta_k^{1/4})$$
 for $j, k = 2, 3$ and
$$\# \mathcal{M}_{0,3,d} (\PQ; \da{1}{2}, \Delta_j^{1/4}, \Delta_k^{1/4} ) = \frac{1}{4}\# \{ (a,b) : Q_G (a,b) =d, \,\,d \equiv 1 \,\,({\rm mod} \,4) \}$$
$$\# \mathcal{M}_{0,3,d} (\PQ; \Delta_j^{2/4}, \Delta_j^{1/4}, \Delta_j^{1/4} ) = \frac{1}{4}\# \{ (a,b) : Q_G (a,b) =d, \,\, d \equiv 0 \,\, ({\rm mod} \,4) \} $$
$$\#\mathcal{M}_{0,3,d} (\PQ; \Delta_j^{2/4}, \Delta_k^{1/4}, \Delta_k^{1/4}) = \frac{1}{4}\# \{ (a,b) : Q_G (a,b) =d, \,\,d \equiv 2 \,\,({\rm mod} \,4) \}$$
\end{theorem}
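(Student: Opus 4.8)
The plan is to run, for $\PQ=\mathbb{P}^1_{2,4,4}$, the same argument that yields Theorem \ref{thm:main1} for $\PT$, with the Eisenstein lattice replaced by the Gaussian (square) lattice. First I would apply the classification theorem of Section \ref{subsec:orbsphorbcover} to present a degree-$d$ holomorphic orbi-sphere with three orbifold markings as a branched orbifold covering, and lift the picture to the universal cover. Writing $\PQ=\C/\Gamma$ for the $(2,4,4)$-triangle group and $E=\C/\Z[i]$ for the manifold cover with $\PQ=E/\Z_4$ (multiplication by $i$), the preimages of $z_1$ (order $2$) are the edge-midpoints, those of $z_2$ (order $4$) the vertices, and those of $z_3$ (order $4$) the face-centers of the tiling of $\C$ by unit squares. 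A holomorphic orbi-sphere connecting a prescribed triple of cone points is thereby encoded by a Gaussian integer $a+bi$, whose squared length $Q_G(a,b)=a^2+b^2=d$ records the degree; this is the Gaussian analogue of the role played by $Q_F(a,b)=a^2-ab+b^2$ for $\PT$.

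Second, I would determine which twisted-sector triples actually occur. The monodromies around the three cone points must multiply to the identity and be compatible with the $\Z_4$-action, and the genus-$0$ selection rule forces the age fractions to sum to $1$. At orders $(2,4,4)$ the only triples of positive fractions with this property are $(1/2,1/4,1/4)$ and $(1/4,1/4,2/4)$; distributing these among $z_1$ and $z_2,z_3$ gives the candidates
$$\CM_{0,3,d}(\PQ;\da{1}{2},\Delta_j^{1/4},\Delta_k^{1/4}),\quad \CM_{0,3,d}(\PQ;\Delta_j^{2/4},\Delta_j^{1/4},\Delta_j^{1/4}),\quad \CM_{0,3,d}(\PQ;\Delta_j^{2/4},\Delta_k^{1/4},\Delta_k^{1/4}),$$
together with the a priori possible mixed type $\Delta_j^{2/4},\Delta_j^{1/4},\Delta_k^{1/4}$. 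I expect the main obstacle to be the completeness claim: even among the triples passing the age rule, only the three listed are geometrically realized, and I would show that the mixed type and all remaining insertions give vanishing invariants by proving that the lift to the cover does not exist, equivalently that the associated Diophantine data are unsolvable or violate the degree congruence. The congruences themselves come from reducing $Q_G$ modulo $4$: since $a^2+b^2\equiv 0,1,2\pmod 4$ according as $(a,b)$ is (even, even), (exactly one odd), or (both odd), and these parity types fix the cosets of $\Z[i]$ occupied by the three marked preimages, the residue of $d$ selects the family, giving $d\equiv 1,0,2\pmod 4$ respectively and excluding $d\equiv 3$, consistent with the absence of a fourth family.

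Third, I would set up the bijection between the points of each moduli space and the solution set $\{(a,b):Q_G(a,b)=d\}$ modulo the unit group $\Z[i]^\times=\{\pm1,\pm i\}$ of order $4$. Multiplying $a+bi$ by a unit is a $\Z_4$-rotation of the cover carrying one representative of a given orbi-sphere to another, so each orbi-sphere is counted exactly four times in the raw solution count; this produces the uniform factor $\tfrac14$, the exact analogue of the factors $\tfrac16$ and $\tfrac13$ appearing for $\PT$, where the relevant unit group $\Z[\omega]^\times$ has order $6$. Establishing this bijection, together with the parity/congruence dictionary above, immediately gives the three displayed counting formulas.

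Finally, a point I would treat carefully is the solutions with nontrivial $\Z[i]^\times$-stabilizer, namely those with $a=0$, $b=0$, or $a=\pm b$, where naive division by $4$ could fail. I would check, exactly as one does for $\PT$, that for the residues $d\equiv 0,1,2\pmod 4$ in question either such solutions do not arise or their stabilizers correspond to orbifold automorphisms already accounted for in the definition of $\CM_{0,3,d}$, so that the stated equalities hold on the nose.
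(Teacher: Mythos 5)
Your proposal follows essentially the same route as the paper's Section 6.2: lift each orbi-sphere to a linear map $z\mapsto\lambda z$ with $\lambda=a+bi\in\Z[i]$, read off the insertion type from which cosets of the lattice the images of $\tfrac12$ and $\tfrac{1+i}{2}$ land in (equivalently the residue of $a^2+b^2$ modulo $4$), and divide by the order-$4$ unit group $\{\pm1,\pm i\}$, which is exactly the paper's $\Z_4$-isotropy action at the base point. The only cosmetic differences are your explicit worry about solutions with nontrivial stabilizer (vacuous for $d\geq 1$, since only $\lambda=0$ is fixed by a nontrivial unit) and your phrasing of the vanishing of the mixed type as unsolvability of the Diophantine data, which is the same exhaustive mod-$4$ case analysis the paper performs.
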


The quantum product on $\PH$ is also related to the Diophantine equation $Q_F (a,b):=a^2 + b^2=d$, but now we consider $d$ modulo $6$.

\begin{prop}\label{prop:main}
For $\PH$, we also have a similar statement related to the number of solutions of $Q_F(a,b)=d$ considering $d \,\, ({\rm mod}\, 6)$ and $d \,\, ({\rm mod}\, 3)$. Nontrivial 3-fold Gromov-Witten invariants are listed as follows.
\begin{enumerate}
\item
$$\# \mathcal{M}_{0,3,d} (\PH; \da{1}{2}, \db{1}{3}, \dc{1}{6} ) = \frac{1}{6}\# \{ (a,b) : Q_F (a,b) =d, \,\,d \equiv 1 \,\,({\rm mod} \,6) \},$$
$$\# \mathcal{M}_{0,3,d} (\PH; \dc{3}{6}, \db{1}{3}, \dc{1}{6} ) = \frac{1}{6}\# \{ (a,b) : Q_F (a,b) =d, \,\,d \equiv 4 \,\,({\rm mod} \,6) \},$$
$$\# \mathcal{M}_{0,3,d} (\PH; \da{1}{2}, \dc{2}{6}, \dc{1}{6} ) = \frac{1}{6}\# \{ (a,b) : Q_F (a,b) =d, \,\,d \equiv 3 \,\,({\rm mod} \,6) \},$$
$$\# \mathcal{M}_{0,3,d} (\PH; \dc{3}{6}, \dc{2}{6}, \dc{1}{6} ) = \frac{1}{6}\# \{ (a,b) : Q_F (a,b) =d, \,\,d \equiv 0 \,\,({\rm mod} \,6) \}.$$
\item
$$\# \mathcal{M}_{0,3,2d} (\PH; \dc{2}{6}, \dc{2}{6}, \dc{2}{6} ) = \frac{1}{6}\# \{ (a,b) : Q_F (a,b) =d, \,\,d \equiv 0 \,\,({\rm mod} \,3) \},$$
$$\# \mathcal{M}_{0,3,2d} (\PH; \db{1}{3}, \db{1}{3}, \dc{2}{6} ) = \frac{1}{6}\# \{ (a,b) : Q_F (a,b) =d, \,\,d \equiv 1 \,\,({\rm mod} \,3) \},$$
$$\# \mathcal{M}_{0,3,2d} (\PH; \db{1}{3}, \db{1}{3}, \db{1}{3} ) = \frac{1}{3}\# \{ (a,b) : Q_F (a,b) =d, \,\,d \equiv 0 \,\,({\rm mod} \,3) \}.$$
\end{enumerate}
In the item (2), there are only even-degree holomorphic orbi-spheres.
\end{prop}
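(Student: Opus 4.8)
The plan is to run, for $\PH$, the same machinery that yields Theorems~\ref{thm:main1} and~\ref{thm:main2}, while tracking the extra data carried by the order-$6$ point $z_3$. By the classification theorem of Section~\ref{subsec:orbsphorbcover}, a nonconstant degree-$d$ holomorphic orbi-sphere into $\PH=[E/\Z_6]$, with $E=\C/\Lambda$ the hexagonal elliptic curve, has a flat orbi-sphere as domain and lifts, through the universal cover $\C$, to an affine map $w\mapsto \alpha w+\beta$ that is equivariant for the two crystallographic groups. Equivariance forces $\alpha$ to be an Eisenstein integer, $\alpha=a+b\omega\in\Z[\omega]=\mathrm{End}(E)$, with degree equal to its norm $d=|\alpha|^2=Q_F(a,b)=a^2-ab+b^2$, while the translation $\beta$ records on which of the three distinguished $\mathrm{Aut}(E)$-orbits --- the lattice points (isotropy $6$), the deep holes (isotropy $3$), the half-periods (isotropy $2$) --- the three cone points of the domain are sent. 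Hence the moduli spaces are again indexed by $Q_F(a,b)=d$, and the content of the Proposition is the dictionary between the twisted-sector triple and the residue of $d$, together with the correct symmetry weight.

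For item (1) the key is the splitting $Q_F\bmod 6\leftrightarrow(Q_F\bmod 2,\,Q_F\bmod 3)$, under which $Q_F$ assumes only the values $0,1,3,4$ modulo $6$. I would show that $Q_F\bmod 3$ decides whether the order-$3$ cone point maps to the genuine order-$3$ point $z_2$ (sector $\db{1}{3}$, occurring when $Q_F\equiv1$) or to the $\Z_3$-subsector of $z_3$ (sector $\dc{2}{6}$, when $Q_F\equiv0$), and symmetrically that $Q_F\bmod2$ decides whether the order-$2$ cone point maps to $z_1$ (sector $\da{1}{2}$) or to the $\Z_2$-subsector of $z_3$ (sector $\dc{3}{6}$). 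The four independent combinations reproduce precisely the four sector triples of item (1) with residues $d\equiv1,4,3,0\pmod6$, and the weight $\frac16=1/|\Z_6|$ comes from the point group $\Z_6=\mathrm{Aut}(E)$ acting on the solutions of $Q_F(a,b)=d$, its orbits indexing isomorphism classes of orbi-spheres exactly as in the all-distinct case of Theorem~\ref{thm:main1}. That no further triple contributes I would obtain from the monodromy relation $g_1g_2g_3=1$ with each $g_i\neq1$ of order dividing the local isotropy, combined with the virtual-dimension (age) constraint that forces the remaining $3$-point invariants to vanish.

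For item (2) the participating sectors $\db{1}{3}$ and $\dc{2}{6}$ are exactly the order-$3$-type sectors, whose monodromies lie in $\Z_3\subset\Z_6$; the corresponding orbi-spheres therefore factor through the intermediate $\Z_2$-cover $\PT=[E/\Z_3]\to[E/\Z_6]=\PH$. Since $\mathrm{Area}(\PH)=\frac12\,\mathrm{Area}(\PT)$, a degree-$d$ orbi-sphere of $\PT$ descends to a degree-$2d$ orbi-sphere of $\PH$, which is why only even degrees occur here and why the congruences are read modulo $3$. I would then identify each moduli space of (2) with the image of one from Theorem~\ref{thm:main1}: the three order-$3$ points of $\PT$ map two-to-one onto $z_2$ and one-to-one onto the order-$3$ structure at $z_3$, so that $(\da{1}{3},\db{1}{3},\dc{1}{3})_{\PT}$ descends to $(\db{1}{3},\db{1}{3},\dc{2}{6})_{\PH}$ and each $(\Delta_i^{1/3})^3_{\PT}$ descends to $(\db{1}{3})^3$ or $(\dc{2}{6})^3$; matching with Theorem~\ref{thm:main1} produces the three identities of (2).

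The hard part will be the two bookkeeping steps underlying this dictionary. First, one must locate the translation coset $\beta$ sharply enough to attach each admissible triple to a single residue class and to exclude every other triple; this is where the age computation for the order-$6$ point, which supports five distinct twisted sectors, has to be done with care. Second, and more delicately, the symmetry weights in item (2) require understanding how the deck group $\Z_2$ of $\PT\to\PH$ acts on the moduli spaces of Theorem~\ref{thm:main1} --- whether that action is free or has fixed loci --- in order to see the weights $\frac16$ and $\frac13$ reappear correctly after descent. The analytic ingredients, namely the affineness of the lift and the regularity of the moduli spaces that makes $\#\mathcal{M}_{0,3,\bullet}$ a genuine weighted count, are the same as for $\PT$ and $\PQ$ and carry over unchanged.
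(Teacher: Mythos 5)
Your strategy coincides with the paper's own proof. For item (1) the domain is $\PH$ itself; the map lifts through the universal covers to $z \mapsto \lambda z$ with $\lambda = a + b\tau \in \Z[\tau]$ and $d = |\lambda|^2 = Q_F(a,b)$, and the insertions at the order-$2$ and order-$3$ marked points are read off from whether $\lambda\cdot\frac{1+\tau}{2}$ and $\lambda\cdot\frac{2+\tau}{3}$ land in $p^{-1}(w_3)$ or not, which is controlled by $d \bmod 2$ and $d \bmod 3$ exactly as in your dictionary; the weight $\frac16$ is the $(1+\tau)$-multiplication symmetry. For item (2) the paper proves (Lemma \ref{lem:PT_lifting}, via the $\pi_1^{orb}$ lifting criterion of Proposition \ref{prop:cov_lifting}) that every such map factors through the double cover $\pi:\PT \to \PH$, that each downstairs map has exactly two liftings exchanged by the deck transformation switching $w_1'$ and $w_2'$ --- this settles the ``hard part'' you flag, since the $\Z_2$-action on liftings is free and yields clean $1{:}2$ correspondences producing the weights $\frac16$ and $\frac13$ --- and that the degree doubles, whence the reduction to Theorem \ref{thm:main1} with $q \mapsto q^2$.

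The one step in your outline that would fail is the claim that the monodromy relation $g_1g_2g_3=1$ together with the age constraint forces all remaining $3$-point invariants to vanish. It does not: the triples $\langle \dc{1}{6}, \dc{1}{6}, \dc{4}{6}\rangle$ and $\langle \db{2}{3}, \dc{1}{6}, \dc{1}{6}\rangle$ have age sum $1$ and admissible monodromy, but their domain is the hyperbolic orbifold $\PP^1_{3,6,6}$ and they are genuinely nonzero; the paper excludes them from the Proposition and only counts them conjecturally (Conjecture \ref{conj:P1366}), precisely because the linear-lift argument is unavailable there. Likewise $\langle \db{1}{3}, \dc{2}{6}, \dc{2}{6}\rangle$ passes your age/monodromy test with elliptic domain $\PT$ yet vanishes, and only because its would-be lifts would contribute to $\langle \Delta_\circ^{1/3}, \dc{1}{3}, \dc{1}{3}\rangle^{\PT}=0$ by Theorem \ref{thm:main1}. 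The triples that genuinely vanish for structural reasons are those whose domain has a smooth marked point ($\PP^1_{2,2}$ or $\PP^1_{3,3}$), and these are killed by Lemma \ref{lem:nodal}, not by an age/monodromy count. So your argument establishes the displayed equalities but not the completeness assertion; that part needs the case-by-case domain analysis.
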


In addition to these, there are two more 3-fold Gromov-Witten invariants
$\langle \dc{1}{6}, \dc{1}{6}, \dc{4}{6} \rangle^{\PH}_{0,3}$ and $\langle \db{2}{3}, \dc{1}{6}, \dc{1}{6} \rangle^{\PH}_{0,3}$, for which we only give a heuristic counting in Conjecture \ref{conj:P1366}. We are not able to associate solutions of a Diophantine equation to these holomorphic orbi-spheres since their domain is $\PP^1_{3,6,6}$ which does not admit an elliptic curve as a covering unlike the other cases.

%



On the other hand, closed-string mirror symmetry for $\mathbb{P}^1_{a,b,c}$ has been intensively studied in many preceding works. For example, Gromov-Witten theory on $\mathbb{P}^1_{a,b,c}$ with $\frac{1}{a} + \frac{1}{b} + \frac{1}{c} >1$ was investigated by Milanov-Tseng \cite{MT} and Rossi \cite{R} using Frobenius structures associated with the mirror potential $W$ for $\mathbb{P}^1_{a,b,c}$. For elliptic orbifold projective lines, global mirror symmetry and LG/CY correspondence were proved by Milanov-Ruan \cite{MR}, Krawitz-Shen  \cite{KS} and Milanov-Shen \cite{MSh}.

In mirror symmetry point of view, quantum cohomology rings of elliptic projective lines should be isomorphic to the Jacobian  rings of their mirror potentials :
\begin{equation}\label{eq:clMS_333}
\mathfrak{ks} : QH^\ast_{orb} (\mathbb{P}^1_{3,3,3}) \to Jac (W_{3,3,3}).
\end{equation}
We remark that $W_{a,b,c}$ has been explicitly computed in \cite{CHKL} and the computation in this paper should be helpful to understand such mirror symmetry isomorphism.


\subsection*{Acknowledgement}
We would like to express our gratitude to our advisor Cheol-Hyun Cho for suggesting the problem and also for valuable discussions. We also thanks Yefeng Shen for pointing out certain inaccuracies in the earlier version of the paper and explaining his work. The first author thanks Atsushi Takahashi for valuable discussions on the modularity of Gromov-Witten potentials, and thanks Jangwon Ju for explanation on theta series.

\section{Preliminaries}
In this section, we briefly review orbifold theory which will be used in the section \ref{classify}.

\subsection{Orbifolds}
The notion of orbifolds had been firstly introduced by Satake \cite{Sa} in the name of $V$-manifolds, and the theory of orbifolds was further developed by Thurston \cite{Thu79}. 
Here we follow the definition of orbifold as given in \cite{CR1}. (See \cite{ALR} for more details on other approaches.)

\begin{definition}\label{def:smmaporb} A paracompact Hausdorff space $X$ is called an $n$-dimensional orbifold if $X$ can be covered by open subsets $X= \cup U_i$ such that
\begin{itemize}
\item[(1)] there exists a homeomorphism $\phi_i : V_i / G_i \to U_i$ where $V_i$ is an open subset of $\R^n$ with an (effective) action of a finite group $G_i$;
\item[(2)] for a point $x \in U_i \cap U_j$, there is an open neighborhood $U_{ij}$ of $x$ in $X$ which is isomorphic to the quotient of an open domain $V_{ij}$ in $\R^n$ by a finite group $G_{ij}$, and embeddings $(V_{ij} \to V_i, G_{ij} \to G_i)$ and $(V_{ij} \to V_j, G_{ij} \to G_j)$ which are equivariant. 
\end{itemize}
The local model $(V_i, G_i, \phi_i)$ of $X$ is called a local uniformizing chart.
\end{definition}

In particular, for each $x \in X$, we can take an open neighborhood $U_x$ of $x$ in $X$ which is isomorphic $V_x / G_x$ (where $V_x \subset \R$ and $G_x$ is a finite group) such that the preimage of $x$ in $V_x$ is the single point which is fixed by $G_x$. We call $(V_x,G_x)$ a local uniformizing chart around $x$.
Now, for two orbifolds $X$ and $Y$, the morphism between them can be defined as follows.

\begin{definition}\label{def:smoothbetorb}
\begin{enumerate}
\item A smooth map $f$ between $X$ and $Y$ is a continuous map $f: X \to Y$, which has the following local property. For each $x \in X$, there exist uniformizing chart $(V_x, G_x)$ and $(V_{f(x)},G_{f(x)})$ of $x$ and $f(x)$ respectively and an injective group homomorphism $G_x \to G_{f(x)}$ such that $f$ admits a local smooth lifting $\WT{f}_{V_x V_{f(x)}} : V_x \to V_{f(x)}$ which is $(G_x, G_{f(x)})$-equivariant. 

\item A smooth map $f$ between $X$ and $Y$ is called good if it admits a collection of maps $\{ \WT{f}_{U U'}, \lambda \}$ which is called a compatible system of $f$ and defined as follows :

For each equivariant embedding $i : (V_2, G_2, \pi_2) \to (V_1, G_1, \pi_1)$ of local uniformizing charts of $X$, there is an equivariant embedding $\lambda(i) :  (V'_2, G'_2, \pi'_2) \to (V'_1, G'_1, \pi'_1)$ of charts of $Y$ with the following commuting diagram,
\begin{equation*}
\xymatrix{
{} && {} & V_0 \ar[dd]^{\WT{f}_{V_0 V'_0}} \\
V_2 \ar[rr]_{i} \ar[rrru]^{j \circ i} \ar[dd]_{\WT{f}_{V_2 V'_2}} && V_1 \ar[ru]_{j} \ar@<0.7ex>[dd]_{\WT{f}_{V_1 V'_1}} & {} \\
{} && {} & V'_0 \\
V'_2 \ar[rr]_{\lambda (i)} \ar[rrru]|!{[rrr];[uu]}\hole_{\lambda (j \circ i)} && V'_1 \ar[ru]_{\lambda (j)} & {}\\
}
\end{equation*}
where each maps are equivariant maps.
\end{enumerate}
\end{definition}

There is another notion of maps between orbifolds introduced by Takeuchi \cite{T}. We will compare two notions in Lemma \ref{lem:orbimap}.

\subsection{Orbifold fundamental group}\label{subsec:orbfundgp}

In this section, we recall the notion of the orbifold fundamental group (introduced by Thurston), which is closely related to orbifold covering theory explained below. This is analogous to the connection between usual covering theory and fundamental groups. It is enough to consider global quotient orbifolds for our purpose in this paper.
Consider a finite group action $G$ on a manifold $M$ which preserving orientation.
(One may consider an infinite group $G$, if an orbifold is represented as a quotient of non-compact manifold by an locally free action of a discrete group.)
Then generalized loops in $M$ are defined as follows:

\begin{definition} A path $\gamma : [0,1] \to M$ is called a generalized loop based at $\widetilde{x_0} \in M$ if $\gamma(0) = \widetilde{x_0}$, and there exists $g_{\gamma} \in G$ such that $\gamma(1)= g \cdot \gamma(0)$.
\end{definition}

Choose a point $\widetilde{x_0} \in M$ with $G_{\widetilde{x_0}}=1$, and let $\pi_1^{orb} ([M/G])$ be the set of equivalence classes of generalized loops based at $\widetilde{x_0}$ where the equivalence relation is given as homotopies fixing end points. One can check that $\pi_1^{orb} ([M/G])$ has a natural group structure by defining
$$[\gamma] \cdot [\delta] = [\gamma \# g_\gamma (\delta) ]$$
for generalized loops $\gamma, \delta$ based at $\widetilde{x_0}$ where `$\#$' denotes the concatenation of paths.

\begin{figure}
\begin{center}
\includegraphics[height=1.8in]{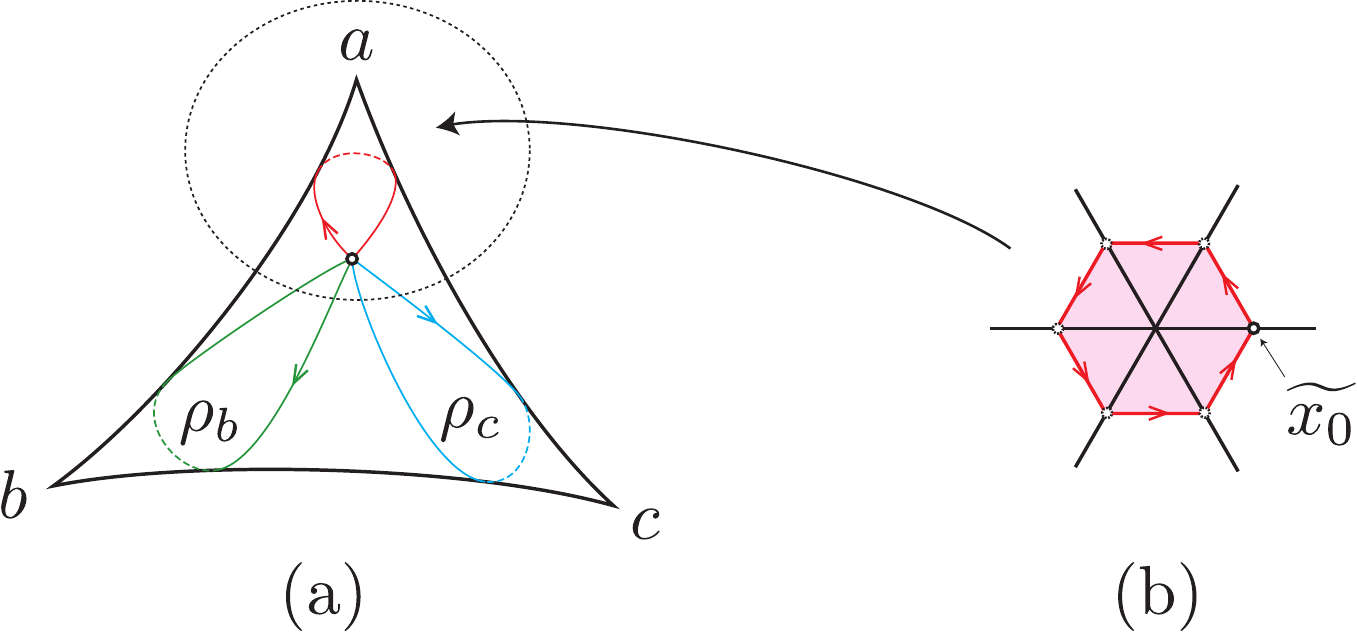}
\caption{(a) generators of $\pi_1^{orb} (\PP^1_{a,b,c})$ and (b) the relation $\rho_1^a=1$}\label{fig:orbfundgenpath}
\end{center}
\end{figure}

For example, $\pi_1^{orb} (\PP^1_{a,b,c})$ has a presentation
$$\pi_1^{orb} (\mathbb{P}_{a,b,c}) = <\rho_1, \rho_2, \rho_3 \,\, | \, \left(\rho_1 \right)^a = \left(\rho_2 \right)^b = \left(\rho_3 \right)^c = \rho_1 \rho_2 \rho_3  =1 >$$
where $\rho_1$, $\rho_2$ and $\rho_3$ are generalized loops in the universal cover of $\mathbb{P}^1_{a,b,c}$ whose images in the underlying space of the orbifold look as in (a) of Figure \ref{fig:orbfundgenpath}. The relation $\rho_1^a = 1$ can be seen in the uniformizing chart around the singular point of order $a$ (see (b) of Figure \ref{fig:orbfundgenpath}). One can observe the relation $\rho_1 \rho_2 \rho_3 =1$ even more directly on the orbifold itself.

\begin{remark}
See \cite{T} for details on the link between orbifold fundamental groups and orbifold coverings which we shall explain below.
\end{remark}

\subsection{Orbifold covering theory}\label{subsec:orbcovthe}

There is an analogue of covering space for orbifolds whose local model is $\R^n / G' \to \R^n / G$ for some finite group $G$ which acting on $\R^n$ with $G' \leq G$.
\begin{definition}\label{def:orbcover}\cite[Section 1]{T}
An orbifold $\WT{X}$ is called a covering orbifold, if there is a continuous surjective map $p : |\WT{X}| \to |{X}|$ satisfying following condition :

For each point $x \in |{X}|$, there is a local uniformizing chart $\WT{U}_x / G_x \cong U_x$ such that each point $\WT{x} \in p^{-1}(x)$ has a local uniformizing chart $\WT{U}_x / G_{x, i} \cong V_{x, i}$ for some $G_{x, i} \leq G_x$ which commutes following diagram
\begin{equation*}
	\xymatrix{
		{} 		& \WT{U}_x / G_{x,i} 	\ar[r]^{\cong}  \ar[d]^{q} 	& V_{x,i} \ar[d]^{p} \\
		\WT{U}_x \ar[r] \ar[ru]	& \WT{U}_x / G_x	\ar[r]^{\cong}	& U_x
	}
\end{equation*}
where $q$ is the natural projection.
\end{definition}

An orbifold which admits a manifold covering is called a {\em good orbifold}. For example, the orbifold projective lines $\PT$, $\mathbb{P}^1_{2,4,4}$ and $\mathbb{P}^1_{2,3,6}$ of our main concern are all good orbifolds, as they are given by quotients of a 2-torus.
Throughout the section, we assume that all orbifolds are good.

%

Following \cite{T} we introduce the notion of orbi-map.
\begin{definition}\label{def:orbimaps}\cite[Section 2]{T} An orbi-map $f : X \to Y$ consists of a continuous map $h : |X| \to |Y|$ between underlying spaces and a fixed continuous map $\tilde{h} : \WT{X} \to \WT{Y}$ which satisfy
\begin{enumerate}
\item $h \circ p = q \circ \tilde{h}$
\item For each $\sigma \in {\rm Aut} ( \WT{X}, p) (\cong \pi_1^{orb} (X))$, there exists $\tau \in {\rm Aut} (\WT{Y} ,q)$ such that $\tilde{h} \circ \sigma = \tau \circ \tilde{h}$
\item $h (|X|) $ does not lie in the singular loci of $Y$ entirely.
\end{enumerate} 
\end{definition}

\begin{remark}
Indeed, covering theory in \cite{T} only concerns about good orbifolds.
\end{remark}

For orbi-maps, we have usual lifting theorems in covering theory as well, whose proof is not very much different from the standard one.

\begin{prop}\cite[Proposition 2.7]{T} \label{prop:cov_lifting}
Let $f : ({X}, x) \to ({Y}, y)$ be an orbi-map and $p : ({Y}', y') \to ({Y}, y)$ be a covering. Then $f$ can be lifted to an orbi-map $\WT{f} : {X} \to {Y}'$ if and only if $f_* \pi^{orb}_1 (X, x) \subset p_* \pi^{orb}_1 ({Y}', y')$.
\end{prop}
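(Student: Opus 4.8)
\textbf{Necessity} I would dispatch by formal functoriality. If an orbi-map lift $\WT{f}:X\to Y'$ with $p\circ\WT{f}=f$ exists, then the induced maps on orbifold fundamental groups satisfy $f_*=p_*\circ\WT{f}_*$, whence $f_*\pi_1^{orb}(X,x)=p_*\big(\WT{f}_*\pi_1^{orb}(X,x)\big)\subset p_*\pi_1^{orb}(Y',y')$.

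For \textbf{sufficiency}, the plan is to run the classical lifting argument at the level of the orbifold universal cover, exploiting that $\WT{Y}$ simultaneously covers $Y$ and $Y'$. Since every orbifold in sight is good, $\WT{Y}$ is a manifold and there are projections $\pi_Y:\WT{Y}\to Y$ and $\pi_{Y'}:\WT{Y}\to Y'$ realizing $Y=\WT{Y}/\pi_1^{orb}(Y)$ and $Y'=\WT{Y}/\pi_1^{orb}(Y')$, where $p_*$ embeds $\pi_1^{orb}(Y')$ as a subgroup of $\pi_1^{orb}(Y)$. Write $\pi_X:\WT{X}\to X$ for the universal covering of $X$. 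By the very definition of an orbi-map, $f$ already supplies an equivariant lift $\WT{h}:\WT{X}\to\WT{Y}$: for each $\sigma\in{\rm Aut}(\WT{X},\pi_X)\cong\pi_1^{orb}(X)$ there is a unique $f_\#(\sigma)\in{\rm Aut}(\WT{Y},\pi_Y)\cong\pi_1^{orb}(Y)$ with $\WT{h}\circ\sigma=f_\#(\sigma)\circ\WT{h}$, and $\sigma\mapsto f_\#(\sigma)$ is exactly the homomorphism inducing $f_*$. The hypothesis $f_*\pi_1^{orb}(X,x)\subset p_*\pi_1^{orb}(Y',y')$ then says precisely that $f_\#$ carries $\pi_1^{orb}(X)$ into the subgroup $\pi_1^{orb}(Y')\leq\pi_1^{orb}(Y)$.

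With this in hand I would define the lift $\WT{f}$ by keeping the same total-space map $\WT{h}:\WT{X}\to\WT{Y}$ and composing with $\pi_{Y'}$ to land in $Y'$, and by descending $\WT{h}$ to underlying spaces: send the $\pi_1^{orb}(X)$-class of a point $\WT{a}\in\WT{X}$ to the $\pi_1^{orb}(Y')$-class of $\WT{h}(\WT{a})$ in $|Y'|$. The only nontrivial point is well-definedness, and this is exactly where the subgroup hypothesis enters: if $\WT{a}'=\sigma\cdot\WT{a}$ for some $\sigma\in\pi_1^{orb}(X)$, then $\WT{h}(\WT{a}')=f_\#(\sigma)\cdot\WT{h}(\WT{a})$, and since $f_\#(\sigma)\in\pi_1^{orb}(Y')$ the two values have the same image in $Y'=\WT{Y}/\pi_1^{orb}(Y')$. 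By construction $p\circ\WT{f}=f$; the equivariance axiom for $\WT{f}$ follows from that of $\WT{h}$ together with $f_\#(\pi_1^{orb}(X))\subset\pi_1^{orb}(Y')$; and the non-degeneracy condition (3) is inherited from $f$ since $\pi_{Y'}$ does not collapse the complement of the singular locus, while the local smooth liftings of $\WT{f}$ are those of $f$ read through $\pi_{Y'}$.

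\textbf{The main obstacle} is bookkeeping rather than anything conceptual: one must fix compatible basepoints and track the identifications ${\rm Aut}(\WT{X},\pi_X)\cong\pi_1^{orb}(X)$ and ${\rm Aut}(\WT{Y},\pi_Y)\cong\pi_1^{orb}(Y)$ carefully enough that $f_\#$ genuinely is the map inducing $f_*$, so that the subgroup hypothesis delivers well-definedness on the nose. Once that setup is in place, verifying the three orbi-map axioms for $\WT{f}$ is routine, in keeping with the paper's remark that the argument differs little from the standard covering-space lifting theorem.
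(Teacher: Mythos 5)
Your argument is correct and is precisely the standard covering-space lifting argument run on the orbifold universal cover; the paper gives no proof of this proposition at all, only the citation to Takeuchi together with the remark that the proof ``is not very much different from the standard one,'' and your write-up is a faithful rendering of that intended argument (necessity by functoriality, sufficiency by descending the given equivariant lift $\tilde{h}:\WT{X}\to\WT{Y}$ modulo the subgroup $p_*\pi_1^{orb}(Y')$ rather than modulo all of $\pi_1^{orb}(Y)$). The only two points worth making explicit are that the uniqueness of $f_\#(\sigma)$ --- and hence the fact that $\sigma\mapsto f_\#(\sigma)$ is a homomorphism inducing $f_*$ --- rests on axiom (3) of Definition \ref{def:orbimaps}, since the image of $\tilde{h}$ then meets the locus where ${\rm Aut}(\WT{Y},\pi_Y)$ acts freely and a deck transformation of a connected cover is determined by its value at one such point, and that axiom (3) for $\WT{f}$ follows because $p$ carries the singular locus of $Y'$ into that of $Y$, so $\WT{f}(|X|)\subset\Sigma_{Y'}$ would force $f(|X|)\subset\Sigma_Y$, contradicting axiom (3) for $f$.
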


As we will often use lifting theorems for orbifold coverings by Takeuchi \cite{T}, we also introduce another notion of morphisms between orbifolds which Takeuchi called orbi-maps. However, we will show in Subsection \ref{subsec:orbimap} that two notions are equivalent for our main examples.

\subsection{Orbi-maps between two dimensional orbifolds}\label{subsec:orbimap}

While orbifold covering theory is well-established for orbi-maps by Takeuchi \cite{T}, orbifold quantum cohomology is defined by counting good maps given in (2) of Definition \ref{def:smoothbetorb}. 
In the case of elliptic $\PP^1$ orbifolds, it turns out that maps given in (1) of Definition \ref{def:smoothbetorb} satisfy axioms in Definition \ref{def:orbimaps}.

\begin{remark}
From \cite[Lemma 4.4.11]{CR2}, a smooth map $f : X \to X'$ between two orbifolds $X$ and $X'$ is a good map with a unique compatible system up to isomorphism, if the inverse image of the regular part of $X'$ is an open dense and conneted subset of $X$. Note that a non-constant smooth map contributing to the quantum product for $\mathbb{P}^1_{a,b,c}$ automatically satisfies this property.
%
%
\end{remark}

Consider two (good) orbifolds $X$ and $Y$ which admit manifold universal covering spaces $p:\WT{X} \to X$ and $q: \WT{Y} \to Y$, respectively. Assume that the deck transformation action of $p$ and $q$ are orientaion preserving. Moreover, assume that both $X$ and $Y$ are two dimensional (which is the case of our main interest).
Note that singular loci of $X$ and $Y$ are sets of isolated points.

\begin{lemma}\label{lem:orbimap}
With the setting as above, any non-constant smooth map $\phi : X \to Y$ satisfies the axioms in Definition \ref{def:orbimaps} if $\dim X = \dim Y =2$.
\end{lemma}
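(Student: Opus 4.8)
The plan is to produce the two pieces of data $(h,\tilde h)$ required by Definition \ref{def:orbimaps} and to verify its three axioms, the essential point being the construction of $\tilde h$. First I would take $h : |X| \to |Y|$ to be the continuous map underlying the smooth map $\phi$. Axiom (3) is then immediate: since the singular locus $S_Y \subset |Y|$ is a finite set of isolated points and $|X|$ is connected, a continuous map whose image lies entirely in $S_Y$ would be constant, contradicting the hypothesis that $\phi$ is non-constant. Thus everything reduces to constructing a continuous lift $\tilde h : \WT X \to \WT Y$ of $\Phi := \phi \circ p$, namely a map with $q \circ \tilde h = h \circ p$; axioms (1) and (2) will then follow formally.

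To build $\tilde h$ I would glue local lifts supplied by the smoothness of $\phi$. Because $\dim X = \dim Y = 2$ and the deck actions are orientation preserving, each isotropy group is cyclic and acts by rotation, and for a good orbifold the universal cover furnishes the uniformizing charts directly: a neighborhood $N_{\tilde x}$ of any $\tilde x \in \WT X$ is identified with a chart $V_x$ on which $G_x$ acts, with $p$ the quotient map. By Definition \ref{def:smoothbetorb}(1), $\phi$ has an equivariant local lift $\WT\phi_x : V_x \to V_{\phi(x)}$, and composing with the realization of $V_{\phi(x)}$ inside a sheet of $\WT Y$ yields a local lift $\ell_{\tilde x} : N_{\tilde x} \to \WT Y$ of $\Phi$, that is $q \circ \ell_{\tilde x} = \Phi$ on $N_{\tilde x}$. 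Over the regular part $Y^{reg}$ the map $q$ is an honest covering, so on the dense overlaps where both are defined any two such local lifts differ by a locally constant deck transformation in ${\rm Aut}(\WT Y, q)$; by continuity this extends across the ramification locus as well. The resulting transition data is a locally constant cocycle valued in ${\rm Aut}(\WT Y, q)$, whose class is governed by $\pi_1(\WT X)$. Here the key structural input enters: $\WT X$ is a simply connected manifold, so the cocycle is a coboundary, and after adjusting the local lifts by deck transformations they agree on overlaps, producing a global continuous lift $\tilde h : \WT X \to \WT Y$ of $\Phi$.

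With $\tilde h$ in hand, axiom (1) holds by construction, since $q \circ \tilde h = \Phi = h \circ p$. For axiom (2), given $\sigma \in {\rm Aut}(\WT X, p) \cong \pi_1^{orb}(X)$, the map $\tilde h \circ \sigma$ again lifts $\Phi$, because $\Phi \circ \sigma = \phi \circ p \circ \sigma = \phi \circ p = \Phi$ as $\sigma$ is a deck transformation of $p$. Two lifts of $\Phi$ that agree at a single regular point coincide on the dense regular locus by unique lifting, hence everywhere by continuity, so $\tilde h \circ \sigma = \tau \circ \tilde h$ for a unique $\tau \in {\rm Aut}(\WT Y, q)$, which is precisely axiom (2). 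I expect the main obstacle to be the behaviour at the preimages of the singular points of $Y$, where $q$ is ramified rather than a genuine covering: one must check that the chart lift $\WT\phi_x$ really lifts $\Phi$ there and that the transition deck transformations extend continuously across these points, so that the gluing via simple connectivity of $\WT X$ and the uniqueness argument above remain valid. Verifying this compatibility through the cyclic rotation model afforded by the two-dimensionality and the orientation-preserving hypothesis is the technical heart of the argument.
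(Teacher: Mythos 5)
Your proposal is correct and follows essentially the same route as the paper: local lifts of $\phi\circ p$ from the smooth-map property, transition data forming a locally constant cocycle in ${\rm Aut}(\WT{Y},q)\cong\pi_1^{orb}(Y)$ that is trivialized by the simple connectivity of $\WT{X}$ (the paper phrases this via a trivial principal $\pi_1^{orb}(Y)$-bundle), and then axiom (2) from the fact that two lifts of $\phi\circ p$ differ by a single deck transformation by connectedness and discreteness. The only point the paper makes slightly more explicit is that verifying the cocycle identity $g_{ij}g_{jk}g_{ki}=1$ uses the generic freeness of the $\pi_1^{orb}(Y)$-action together with non-constancy of $\phi$, which is the same issue you flag at the ramification locus.
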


\begin{proof}
We first show that there is a continuous map $\tilde{\phi} : \WT{X} \to \WT{Y}$ which lifts $\phi : X \to Y$.
\begin{equation}\label{eq:diaorbigood}
\xymatrix{\WT{X} \ar[rrd]^{\phi \circ p} \ar@{-->}[rr]^{\tilde{\phi}} \ar[d]_p && \WT{Y} \ar[d]^q \\
X \ar[rr]_\phi  && Y}
\end{equation}
What we want to have is basically a lift of the map $\phi \circ p : \WT{X} \to Y$. We claim that at each point $\tilde{x} \in \tilde{X}$ there is a local lifting of $\phi \circ p$. Let $x:=p (\tilde{x})$.
Then one can find a neighborhood of $\tilde{x}$ which uniformizes $X$ locally around $x$. Since the same is true for any point in the inverse image $q^{-1} (\phi(x))$, we can find a local lifting of $\phi$ around $\tilde{x}$ by the properties of orbifold maps.

By gathering such a neighborhood for each $\tilde{x} \in \WT{X}$, we obtain a open covering $\WT{\mathcal{U}} = \left\{ \WT{U}_i \, : \, \WT{U} \subset \WT{X}, i \in I \right\}$ of $\WT{X}$ which consists of open subsets of $\WT{X}$ on which $\phi$ can be locally lifted. For each $\WT{U}_i \in \WT{\mathcal{U}}$, we fix a local lifting $\tilde{\phi}_i$ of $\phi$. On the intersection of two open subsets $\WT{U}_i$ and $\WT{U}_j$ in $\WT{\mathcal{U}}$, two local liftings $\tilde{\phi}_i$ and $\tilde{\phi}_j$ differ by an element $g_{ij}$ of ${\rm Aut} (\WT{Y}, q) \cong \pi_1^{orb} (Y)$. i.e. 
\begin{equation}\label{eq:differgij}
\tilde{\phi}_i |_{\WT{U}_i \cap \WT{U}_j} = g_{ij} \circ \tilde{\phi}_j |_{\WT{U}_i \cap \WT{U}_j}.
\end{equation}

Note that $\{g_{ij} \, : \, i,j \in I \}$ satisfies the usual cocycle condition, that is,
\begin{equation}\label{eq:gijcocycle}
g_{ij} g_{jk} g_{ki} =1.
\end{equation}
\eqref{eq:gijcocycle} follows from
\begin{eqnarray*}
\tilde{\phi}_i &=& g_{ij} \circ \tilde{\phi}_j \\
&=& (g_{ij} g_{jk} )\circ \tilde{\phi}_k \\
&=& (g_{ij} g_{jk} g_{ki} ) \circ \tilde{\phi}_i
\end{eqnarray*}
on $\WT{U}_i \cap \WT{U}_j \cap \WT{U}_k$ and the fact that the action of $\pi_1^{orb} (Y)$ on $Y$ is free generically. (Recall that $\phi$ is a non-constant map.)

Therefore, $\{g_{ij} \}_{i,j \in I}$ defines a principal $\pi_1^{orb} (Y)$-bundle over $\WT{X}$  or equivalently a covering space of $\WT{X}$. Here, $g_{ij}$ glues $\WT{U}_i \times \pi_1^{orb} (Y)$ and $\WT{U}_j \times \pi_1^{orb} (Y)$ by the left multiplication so that the resulting bundle admits the right action of $\pi_1^{orb} (Y)$. Since $\pi_1^{orb} (Y)$ is discrete and $\WT{X}$ is simply connected, this bundle should be trivial. Therefore, the cocycle $\{g_{ij}\}$ is also trivial up to coboundary. i.e. there exists a collection $\{(\epsilon_i,\WT{U}_i) \in \pi_1^{orb} (Y) \times \WT{\mathcal{U}} \, : \, i \in I \}$ of elements of  $\pi_1^{orb} (Y)$ each of which is associated with an open subset in $\WT{\mathcal{U}}$ such that
\begin{equation}\label{eq:gijcobdy}
g_{ij} = \epsilon_i^{-1} \epsilon_j.
\end{equation}
(In other words, $\{ \sigma_i \}$ trivializes the principal bundle corresponding to the data $\{g_{ij}\}$.)

From \eqref{eq:differgij} and \eqref{eq:gijcobdy}, we have
$$\epsilon_i \tilde{\phi}_i = \epsilon_j \tilde{\phi}_j$$
on $\WT{U}_i \cap \WT{U}_j$. If we set $\tilde{\phi}'_i := \epsilon_i \tilde{\phi}_i$, then $\{\tilde{\phi}'_i \}_{i \in I}$ gives a collection of local liftings of $\phi$ any two of which agree on their common domain. Denote the resulting global lifting of $\phi$ by $\tilde{h}$.

We next check the second axiom of Definition \ref{def:orbimaps}. Let $\sigma$ be a deck transformation of the covering $p: \WT{X} \to X$, that is, an element of ${\rm Aut} \, (\WT{X}, p)$.  Then $\tilde{h} \circ \sigma$ is a lifting of $\phi \circ p$ because 
\begin{eqnarray*}
q \circ \tilde{h} \circ \sigma &=& (\phi \circ p) \circ \sigma\\
&=& \phi \circ (p \circ \sigma) \\
&=& \phi \circ p.
\end{eqnarray*}

Since both $\tilde{h}$ and $\tilde{h} \circ \sigma$ are liftings of $\phi \circ p$, one can find an element $\tau_{\tilde{x}}$ in ${\rm Aut}\,(Y,q)$ for each $\tilde{x} \in \WT{X}$ such that
$$\tilde{h} \circ \sigma (\tilde{x}) = \tau_{\tilde{x}} ( \tilde{h} ( \tilde{x})).$$
Since $\WT{X}$ is connected and $\pi_1^{orb} (Y)$ is discrete, $\tau_{\tilde{x}}$ has to be independent of $\tilde{x}$. This gives an element $\tau$ in the second property of orbi-maps.
\begin{equation}
\xymatrix{\WT{X} \ar[rr]^{\tilde{h} \circ \sigma, \,\, \tilde{h}} \ar[drr]_{\phi \circ p} && \WT{Y} \ar[d]^q  \\
&& Y
}
\end{equation}

Finally, the third condition of orbi-maps follows obviously since we are only considering non-constant morphisms between orbifolds with the same dimension.

%
%
\end{proof}

\begin{remark}
From the proof, we see that the lemma also holds for a smooth map between two good orbifolds of general dimensions which does not send a whole open subset to a fixed locus.
\end{remark}

\section{Gromov-Witten theory of orbifolds}\label{sec:GWtheoryorb}
In this section, we briefly review the quantum cohomology of orbifolds developed by Chen and Ruan. The key ingredients in defining the product on this cohomology are holomorphic orbi-spheres (or orbifold stable maps in general) in orbifolds with three marked points. If we consider such spheres with arbitrary number of markings as well, then we obtain the orbifold (genus-0) Gromov-Witten invariants \cite{CR2} (See Subsection \ref{subsec:desmod} and \ref{subsec:GWorb}, also.)

At the end of the section, we will come back to our main examples, orbifold projective lines $\mathbb{P}^1_{a,b,c}$ and their Gromov-Witten theory. We are particularly interested in $\PP^1_{a,b,c}$ with $\frac{1}{a} + \frac{1}{b} + \frac{1}{c} = 1$ which are in fact quotients of an elliptic curve, which exhibit a lots of number theoritic phenomenons. We remark that Satake and Takahashi \cite{ST} provided the full genus-0 potential of $\PP^1_{3,3,3}$ using the algebraic method. We will also briefly recall their work.


\subsection{Description of $\OL{\CM}_{g,k,\beta}(X)$}\label{subsec:desmod}
Let $(X,\omega)$ be an compact effective symplectic orbifold with a compatible almost complex structure $J$. (See \cite[Definition 2.1.1, 2.1.5]{CR2}.) We begin with the description of the compactified moduli space of orbifold stable maps into $X$. Details can be found in \cite{CR2}.

\begin{definition}[\cite{CR2}, Definition 2.2.2]
An orbi-Riemann surface of genus $g$ is a triple $(\Sigma, \bz, \bm)$ :
\begin{itemize}
\item $\Sigma$ is a smooth Riemann surface of genus $g$.
\item $\bz = ( z_1, \cdots, z_k)$ is a set of orbi-singular points on $\Sigma$ with isotropy group of order $\bm = ( m_1, \cdots, m_k )$ for some integer $m_i (\geq 2)$.
The orbifold structure on $\Sigma$ is given as follows: at each point $z_i$, a disc neighborhood of $z_i$ is uniformized by the branched covering map $z \to z^{m_i}$.
\end{itemize}
\end{definition}

In order to compactify the moduli space, we should also include nodal Riemann surfaces as domains of holomorphic maps.

\begin{definition}[\cite{CR2}, Definition 2.3.1]
A nodal Riemann surface with $k$ marked points is a pair $(\Sigma, \bz)$ of a connected topological space $\Sigma =  \bigcup \pi_{\Sigma_{\nu}}(\Sigma_{\nu})$ and a set of $k$-distinct points $\bz = (z_1, \cdots, z_k)$ in $\Sigma$ with the following properties.
	\begin{itemize}
		\item $\Sigma_{\nu}$ is a smooth Riemann surface of genus $g_{\nu}$, and $\pi_{\nu} : \Sigma_{\nu} \to \Sigma$ is a continuous map. The number of $\Sigma_{\nu}$ is finite.
		\item For each $z \in \Sigma_{\nu}$, there is a neighborhood of it such that the restriction of $\pi_{\nu} : \Sigma_{\nu} \to \Sigma$ to this neighborhood is a homeomorphism to its image.
		\item For each $z \in \Sigma$, we have $\sum_{\nu} \#\pi_{\nu}^{-1} (z) \leq 2$, and the set of nodal points $\left\{ z | \sum_{\nu} \#\pi_{\nu}^{-1} (z) = 2\right\}$ is finite.
		\item $\bz$ does not contain any nodal point.
	\end{itemize}
\end{definition}

We next allow cone singularities on nodal Riemann surfaces.

\begin{definition}[\cite{CR2}, Definition 2.3.2]\label{def:nodalorb}
A nodal orbi-Riemann surface is a nodal marked Riemann surface $(\Sigma, \bz)$ with an orbifold structure as follows:
	\begin{itemize}
		\item The set of orbi-singular points
		is contained in the set of marked points and nodal points $\bz$;
		\item A disk neighborhood of a marked point is uniformized by a branched covering map $z \to z^{m_i}$;
		\item A neighborhood of a nodal point is uniformized by the chart $( \WT{U}, \Z_{n_j})$, where $\WT{U} = \left\{ (x, y) \in \C^2 \big{|} xy = 0 \right\}$ on which $\Z_{n_j}$ acts by $e^{\frac{2 \pi i}{n_j}} \cdot (x, y) = (e^{\frac{2 \pi i}{n_j}} x, e^{-\frac{2 \pi i}{n_j}}y)$.
	\end{itemize}
Here $m_i$ and $n_j$ are allowed to be $1$. We denote the corresponding nodal orbi-Riemann surface by $(\Sigma, \bz, \bm, \bn)$, and if there is no confusion, simply write it as $(\Sigma, \bz)$. (See Figure \ref{fig:orb_stab_map}.)
\end{definition}
\begin{figure}
\begin{center}
\includegraphics[height=1.3in]{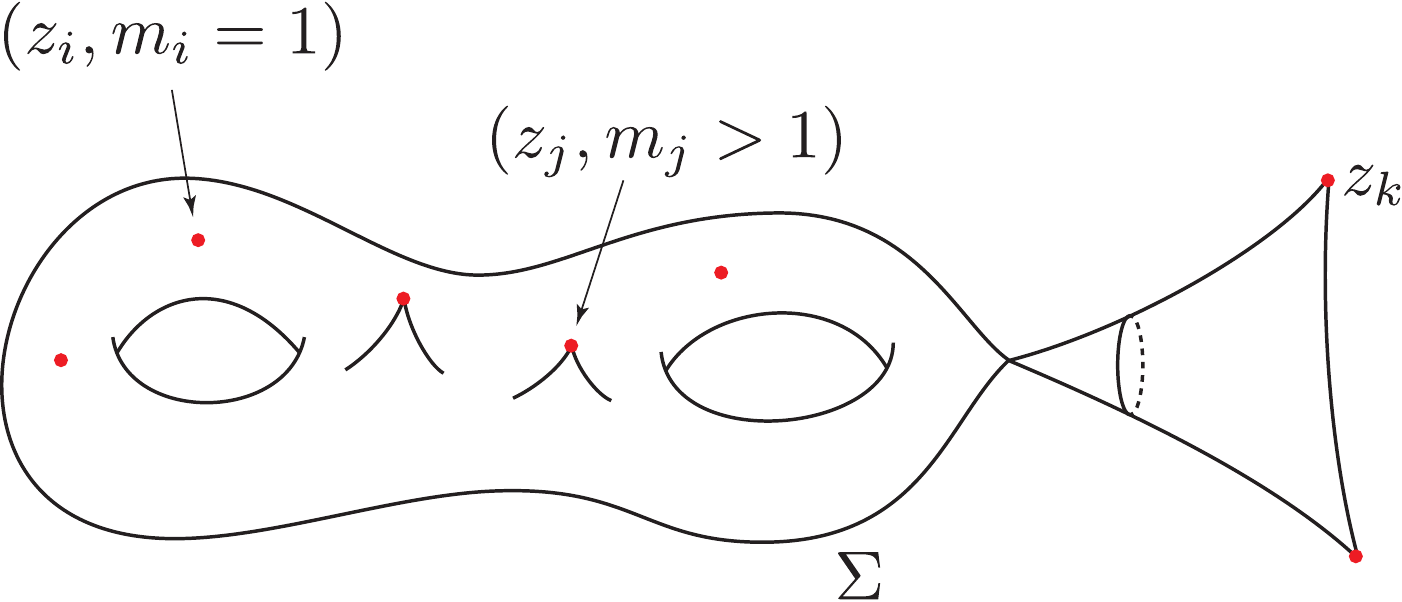}
\caption{A nodal orbi-Riemann surface}\label{fig:orb_stab_map}
\end{center}
\end{figure}
Having a nodal orbi-Riemann surface as the domain, an orbifold stable map is defined as follows.

\begin{definition}[\cite{CR2}, Definition 2.3.3]\label{def:crobstab}
For given an almost complex orbifold $(X, J)$, an orbifold stable map is a triple $(f, (\Sigma, \bz, \bm, \bn), \xi)$ :
	\begin{itemize}
		\item $f : \Sigma \to |X|$ is a continuous map from a nodal Riemann surface $\Sigma$ such that $f_{\nu} = f \circ \pi_{\nu}$ is a $J$-holomorphic map.
		\item Consider the local lifting $\WT{f}_{V_x V_{f(x)}} : (V_x, G_x) \to (V_{f(x)}, G_{f(x)})$ of $f$. Then the homomorphism $G_x \to G_{f(x)}$ is injective.
		\item Let $k_{\nu}$ be the number of points in $\Sigma_{\nu}$ which is marked or nodal. If $f_{\nu}$ is a constant map, then $k_{\nu} + 2 g_{\nu} \geq 3$.
		\item $\xi$ is an isomorphism class of compatible systems.
		\end{itemize}
\end{definition}
For the definition of an isomorphism between compatible systems, see \cite[Definition 4.4.4]{CR2}.




%

We are now ready to define the moduli space relevant to the orbifold Gromov-Witten invariants of ${X}$.

\begin{definition}\label{def:GWgauge_equiv}
\begin{enumerate}
\item
Two stable maps $\big(f, (\Sigma, \bz), \xi \big)$ and $\big(f', (\Sigma', \bz'), \xi' \big)$ are equivalent if there is an isomorphism $\theta : (\Sigma, \bz, \bm, \bn) \to (\Sigma', \bz', \bm', \bn')$ such that $f' \circ \theta = f$ and $\xi' \circ \theta = \xi$.
\item
Given a homology class $\beta \in H_2(|{X}|; \Z)$, $\OL{\CM}_{g,n,\beta}({X}, J)$ is defined as the moduli space of equivalence classes of orbifold stable maps of genus $g$, with $k$ marked points, and of homology class $\beta$.
\end{enumerate}
\end{definition}

%
%
%


%

\subsection{Gromov-Witten invariants of orbifolds}\label{subsec:GWorb}

%
%

Now, we recall the definition of the orbifold cohomology group $H^\ast_{orb} ({X}, \Q)$ from \cite{CR1} which is freely generated by the elements of the cohomology groups of twisted sectors of ${X}$ (i.e., $H^\ast_{orb} ({X},\Q) = H^\ast (IX, \Q)$ as a vector space, where $IX$ is the inertia orbifold of ${X}$.) Here, the degrees of elements in $H^\ast ( {X}_{(g)},Q)$ are shifted by $2 \iota(g)$ where ${X}_{(g)}$ is the twisted sector associated with the conjugacy class $(g)$ and $\iota(g)$ is the age of an element $g$ in a local group. 
$H^*_{orb} ({X}, \Q)$ also admits a natural Poincar\`e pairing which is compatible with these shifted degrees:
\begin{equation}\label{eq:P_pair}
\int^{orb}_{{X}} (-) \cup_{orb} (-) : H^\ast_{orb} ({X}, \Q) \otimes H^\ast_{orb} ({X}, \Q) \to \Q.
\end{equation}

 We fix a $\Q$-basis $\{\gamma_i \}_{i=1,\cdots,N}$ of $H^\ast_{orb} ({X}, \Q)$. 
 Then the $k$-fold Gromov-Witten invariants is defined by the following equation:
\begin{equation}\label{eq:nGW}
\langle \gamma_1 , \cdots, \gamma_k \rangle^{{X}}_{g, k, \beta} := \int_{[\OL{\CM}_{0,k,\beta}({X})]^{vir}} ev_1^* \gamma_1 \wedge \cdots \wedge ev_k^* \gamma_k.
\end{equation}
We also define $\langle \gamma_1 , \cdots, \gamma_k \rangle^{{X}}_{g, k}$ to be the weighted sum $\sum_{\beta} \langle \gamma_1 , \cdots, \gamma_k \rangle^{{X}}_{g, k, \beta} q^{\omega (\beta)}$.

\begin{remark}
The compactified moduli space $\OL{\CM}_{0,k,\beta}({X})$ admits a virtual fundamental class $[\OL{\CM}_{0,k,\beta}({X})]^{vir}$ which can be defined with help of an abstract perturbation technique in general. (Readers are referred  to \cite{CR2} for more details.)
\end{remark}

For a tuple $\boldsymbol{x} = ({X}_{(g_1)}, {X}_{(g_2)}, \cdots, {X}_{(g_k)})$ of twisted sectors, we say that $\big(f, (\Sigma, \bz), \xi \big)$ is of type $\boldsymbol{x}$, if orbi-insertions at the marked point $z_i$ lies in $H^\ast ({X}_{(g_i)}, \Q)$ for each $i$. Let $\OL{\CM}_{g,k,\beta}({X}, J, \beta, \boldsymbol{x})$ denote the moduli space of equivalence classes of orbifold stable maps of genus $g$ with $k$ marked points and of homology class $\beta$ and type $\boldsymbol{x}$. Then $\OL{\CM}_{0,k,\beta}({X})$ is the union of $\OL{\CM}_{g,k,\beta}({X}, J, \beta, \boldsymbol{x})$ over all types $\boldsymbol{x}$, and the integration in \eqref{eq:nGW} is nonzero on components $\OL{\CM}_{g,k,\beta}({X}, J, \beta, \boldsymbol{x})$ with $\gamma_i \in H^\ast ({X}_{(g_i)}, \Q)$.

For later purpose, we give the virtual dimension of $\OL{\CM}_{g,k,\beta}({X}, J, \beta, \boldsymbol{x})$ explicitly:
\begin{equation}\label{eq:virdimMod}
2c_1(T{X}) (\beta) + 2(n-3)(1-g) + 2k -2 \iota(\boldsymbol{x}),
\end{equation}
where $2n = \rm{dim}_{\R} X$ and $\iota(\boldsymbol{x}) = \sum_{i=1}^{k} \iota(g_i)$. (See \cite[Proposition 3.2.5]{CR2}.)

\begin{remark}\label{rmk:c1vanvirdim}
If $c_1$ of ${X}$ vanishes as in our elliptic examples, the virtual dimension of the moduli is independent of the homology class $\beta$.
In particular, $n = 1$ and $g = 0$ in our main examples.
\end{remark}

If we set $\bt := \sum t_i \gamma_i$, then the generating function for the Gromov-Witten invariants is defined as
\begin{equation*}
F^{{X}}_0 (\bt):= \sum_{k, \beta} \frac{1}{k!} \langle \bt, \cdots, \bt \rangle^{{X}}_{0, k, \beta} q^{\omega(\beta)},
\end{equation*}
which we will call the genus-0 Gromov-Witten potential for ${X}$.

In particular when $k=3$, the counting given in \eqref{eq:nGW} defines a product $\ast$ on $H^*_{orb} ({X}, \Q)$ which is called the quantum product. More precisely,
$$ \int_{{X}}^{orb} (\gamma_i \ast \gamma_j) \cup_{orb} \gamma_l := \langle  \gamma_i, \gamma_j, \gamma_l \rangle_{0,3}^{{X}},$$
or equivalently,
$$\gamma_i \ast \gamma_j := \sum_{l=1}^N \sum_{\beta} \langle  \gamma_i, \gamma_j, \gamma_l \rangle^{{X}}_{0,3,\beta} PD(\gamma_l) q^{\omega (\beta)}$$
where $PD(-)$ denotes the Poincar\`e dual with respect to the pairing \eqref{eq:P_pair}. Therefore, \eqref{eq:nGW} with $n=3$ gives structure constants of this product. The associativity of $\ast$ is proved in \cite{CR2}. We remark that what we have defined is the small quantum cohomology of ${X}$ while the big quantum cohomology involves the full Gromov-Witten invariants.



\subsection{Elliptic orbifolds $\PP^1_{a,b,c}$ and review on Satake-Takahashi's work}\label{subsec:GWforP}

We now focus on elliptic orbifolds with three cone points $\PP^1_{a,b,c}$ and its Gromov-Witten potential. $\PP^1_{a,b,c}$ is elliptic if and only if $\frac{1}{a} + \frac{1}{b} + \frac{1}{c} =1$, and hence there are three elliptic orbifold projective lines $\PP^1_{a,b,c}$ where $(a,b,c)$ are $(3,3,3),(2,3,6)$ and $(2,4,4)$. They are called elliptic since these orbifolds can be described as a global quotient of an elliptic curve $E$ by a finite cyclic group $G$.

We first fix the notation for generators of their orbifold cohomology rings in the following way:
Let $w_1$, $w_2$, and $w_3$ be the three cone points $\PP^1_{a,b,c}$ with isotropy groups $\Z_a$, $\Z_b$, and $\Z_c$, respectively. We fix a choice of a $\Q$-basis of $H^*_{orb}(\PP^1_{a,b,c}, \Q)$, which is fairly standard.
The $\Q$-basis 
\begin{equation}\label{eq:fixbasisnote}
\left\{ 1, \da{1}{a}, \cdots, \da{(a-1)}{a}, \db{1}{b}, \cdots, \db{(b-1)}{b}, \dc{1}{c}, \cdots, \dc{(c-1)}{c}, \pt \right\}
\end{equation}
of $H^*_{orb}(\PP^1_{a,b,c}, \Q)$ is defined by the following conditions.
The basis of smooth sector are
\begin{align*}
&H^0_{orb}(\PP^1_{a,b,c}, \Q) = \Q \langle 1 \rangle, \quad \quad \quad H^2_{orb}(\PP^1_{a,b,c}, \Q) = \Q \langle \pt \rangle.
\end{align*}
For twist sectors, let $\da{j}{a} \in H^{\frac{2j}{a}}_{orb}(\PP^1_{a,b,c}, \Q)$, $\db{j}{b} \in H^{\frac{2j}{b}}_{orb}(\PP^1_{a,b,c}, \Q)$, and $\dc{j}{c} \in H^{\frac{2j}{c}}_{orb}(\PP^1_{a,b,c}, \Q)$ which are supported at singular points $w_1$, $w_2$, and $w_3$, respectively.
For $\bt := \sum t_{j,i} \Delta^i_j$,

Orbifold cup products with respect to these basis are given as follows.
$$\int_{X}^{orb} \Delta_{1}^{j/a} \cup \Delta_{1}^{k/a} = \frac{1}{a} \delta_{j+k-a , 0} \quad \int_{X}^{orb} \Delta_{1}^{j/b} \cup \Delta_{1}^{k/b} = \frac{1}{b} \delta_{j+k-b,0},$$
$$\int_{X}^{orb} \Delta_{1}^{j/c} \cup \Delta_{1}^{k/c} = \frac{1}{a} \delta_{j+k-c,0} \quad \int_{X}^{orb} 1 \cup [pt] = 1$$
where $\delta_{i,j}$ is $1$ if $i=j$ and zero otherwise. The last cup product does not have any fraction since both $1$ and $[pt]$ live in smooth(untwisted) section of $\mathcal{I}{X}$.

\begin{remark}\label{rmk:warnPD}
Readers are hereby warned that the Poincar\`e dual $PD(\Delta_1^{j/a})$ of $\Delta_1^{j/a}$ is not $\Delta_1^{(a-j)/a}$, but $a \times \Delta_1^{(a-j)/a}$, and the same happens for $b$ and $c$. However, $1$ and $[pt]$ are still Poincar\`e dual to each other.
\end{remark}

In the remaining part, we briefly recall the work of Satake and Takahashi \cite{ST} on $\PT$. We first give a description of $\PT$ as a quotient of an elliptic curve. Let $E$ be the elliptic curve associated with the lattice $ \Z  \langle 1, \tau \rangle$ in $\C$ where $\tau =\exp \left(\frac{2 \pi \sqrt{-1}}{3} \right)$. Then the $\Z_3$-action on $\C$ generated by $2 \pi /3$-rotation descends to $E$ since this action preserves the lattice $ \Z  \langle 1, \tau \rangle$. By taking quotients of $E$ via the induced $\Z_3$-action, we finally obtain the global quotient orbifold $\PT = [E / \Z_3]$. (The shaded region in (a) of Figure \ref{fig:P1333} represents a fundamental domain of the $\Z_3$-action on $E$.) Since each fixed point in $E$ has the isotropy group isomorphic to $\Z_3$, $\PT$ has three cone points each of which has $\Z_3$-singularity. We denote these singular points by $w_1$, $w_2$ and $w_3$, respectively. Therefore, the inertia orbifold $\mathcal{I}\PT$ consists of the trivial sector together with three $B\Z_3$(equivalent to $[\Z_3 \ltimes \{\ast \}]$)'s which are associated with the point $w_i$'s.


\begin{figure}
\begin{center}
\includegraphics[height=2.3in]{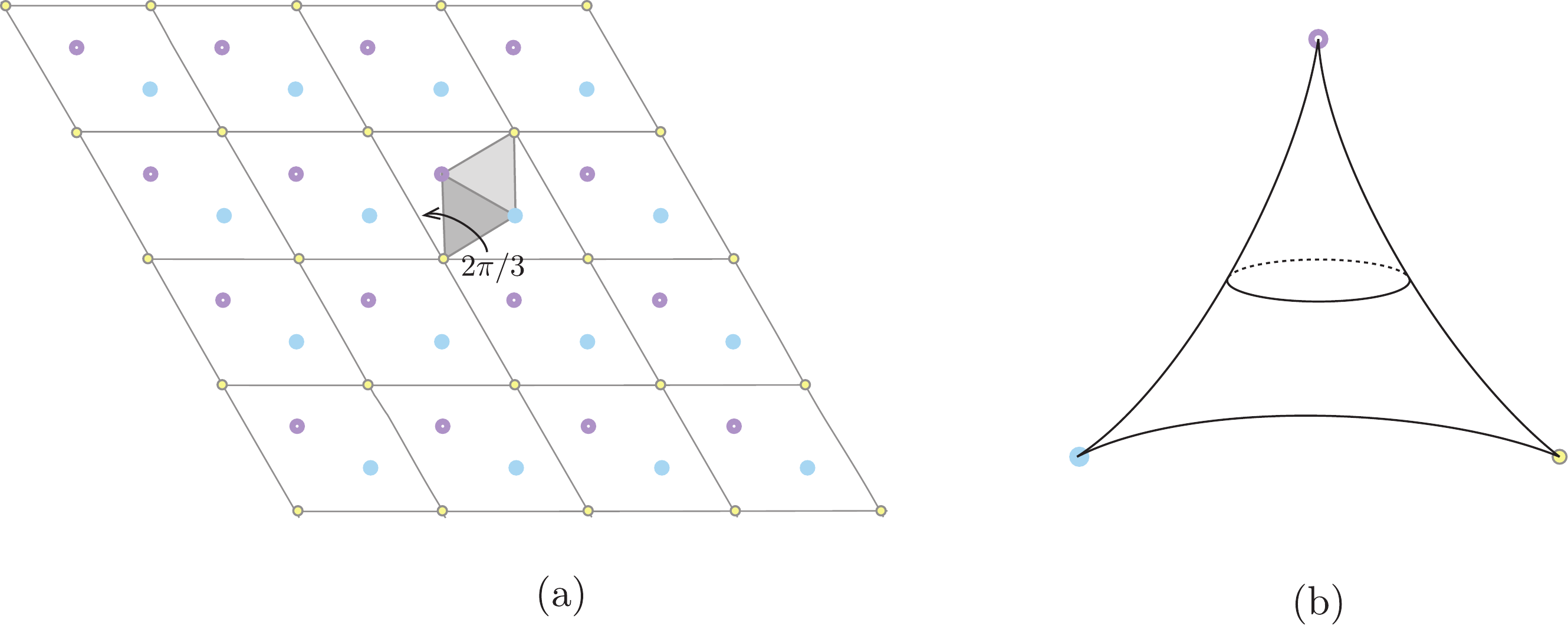}
\caption{(a) The $\Z_3$-action on $E$ and (b) its quotient}\label{fig:P1333}
\end{center}
\end{figure}

\begin{remark}\label{rmk:orbcoverE}
Consider the universal covering $\C \to E$ of the elliptic curve.
The composition $p: \C \to E \to \PT$ as well as the quotient map $E \to \PT$ is a holomorphic orbifold covering map in the sense of \cite{T} (see Definition \ref{def:orbcover}, also). Indeed, $p$ is the orbifold universal cover. We will use this fact crucially to classify holomorphic orbi-spheres in $\PT$.
\end{remark}
%

Following the notation in \eqref{eq:fixbasisnote}, the $\Q$-basis of $H^*_{orb}(\PT, \Q)$ is given by
\begin{align*}
&H^0_{orb}(\PT, \Q) = \Q \langle 1 \rangle, \quad \quad \quad && H^2_{orb}(\PT, \Q) = \Q \langle \pt \rangle,\\
&H^\frac{2}{3}_{orb}(\PT, \Q) = \Q \langle \da{1}{3}, \db{1}{3}, \dc{1}{3} \rangle, \quad && H^\frac{4}{3}_{orb}(\PT, \Q) = \Q \langle \da{2}{3}, \db{2}{3}, \dc{2}{3} \rangle,
\end{align*}
and the Poincar\`e pairing for $\Delta^{i}_{j}$'s is determined by
\begin{equation*}
\int_{\PT}^{orb} \Delta^i_j \cup_{orb} \Delta^k_l = \left\{
\begin{array}{cc}
\frac{1}{3}	, & \text{if $i + k = 1$ and $j = l$},\\
0, & \text{otherwise.}
\end{array}\right.
\end{equation*}


Satake and Takahashi \cite[Theorem 3.1]{ST} calculated the genus 0-Gromov-Witten potential of $\PT$ and the quantum product term can be written as
\begin{equation}\label{eq:f0f1}
\begin{array}{rl}
f_0 (q) &:= \sum_{d \geq 0} \langle \da{1}{3}, \db{1}{3}, \dc{1}{3} \rangle_{0,3,d} q^d = \frac{\eta(q^9)^3}{\eta(q^3)},	\\
f_1 (q) &:= \sum_{d \geq 0} \langle \da{1}{3}, \da{1}{3}, \da{1}{3} \rangle_{0,3,d} q^d =\left( 1 + \frac{1}{3} \left(\frac{\eta(q)}{\eta(q^9)}\right)^3 \right) f_0(q).
\end{array}
\end{equation}
Here, $\eta(q)$ is the Dedekind's eta function
\begin{equation*} 
	\eta(\tau) := \exp \left(\frac{\pi \sqrt{-1} \tau}{2}\right) \prod_{n = 1}^{\infty} \left(1 - q^n \right), \quad
	q = \exp(2 \pi \sqrt{-1} \tau)
\end{equation*}
for $\tau \in \mathbb{H} := \big\{ \tau \in \C \,\,|\,\, \text{Im} (\tau) > 0 \big\}$.

Write $f_0 (q) = \sum_{N \geq 1} a_N q^N$.
The Fourier coefficients $a_N$ of $f_0$ depends on the prime factorization of $N$ (or more precisely the quadratic reciprocity of $N$), and is given by
\begin{equation*}
a_N= \left\{
\begin{array}{cl}
0 & n  >0, \,\,\,\mbox{or one of}\,\,\, {m_j} \,\,\, \mbox{is odd}\\
(n_1 +1)  \cdots (n_k +1) & \mbox{otherwise}
\end{array}
\right.
\end{equation*}
for $N = 3^n p_1^{n_1} \cdots p_k^{n_k} q_1^{m_1} \cdots q_l^{m_l}$ where $p_i$ is a prime number with $p_i \equiv 1 (\mod 3)$, and $q_i$ is a prime number with $q_i \equiv 2 \,\, (\!\!\!\!\mod 3)$. (See \cite{S}.) The Fourier coefficients of $f_1$ also has a similar description which we will give in Appendix.

We also provide first few terms of $f_0$ and $f_1$ for readers to get an impression:
$$f_0 = q + q^4 + 2 q^7 + 2 q^{13} + q^{16} + 2 q^{19} + O(q^{24}),$$
$$f_1 = \frac{1}{3} + 2 q^3 + 2 q^9 + 2 q^{12} + 4 q^{21} + O(q^{24}).$$

\section{Holomorphic orbifold maps}\label{classify}
As mentioned in the introduction, our main goal is to compute the (quantum) product structure of $H^\ast_{orb} (\mathbb{P}^1_{a,b,c}, \Q)$ where $(a,b,c)$ is one of $(3,3,3)$, $(2,3,6)$, and $(2,4,4)$.
Throughout the section, $\PP^1_{a,b,c}$ denotes one of elliptic orbifolds $\PT$, $\PH$, and $\PQ$. 
In order to do this, we have to count holomorphic orbi-spheres in (or stable maps into) $\mathbb{P}^1_{a,b,c}$ with three markings. In this section, we first characterize these holomorphic maps and find their properties which are useful to classify holomorphic orbi-spheres in $\PP^1_{a,b,c}$.
We will see that if $\big(f, (\PP^1, \bz), \xi \big)$ is a non-constant orbifold stable map into elliptic $\mathbb{P}^1_{a,b,c}$ of type $\boldsymbol{x}$ , $\bz$ can not contain a smooth point. Thus, we may assume that $\boldsymbol{x}$ is a triple of twisted sectors. (See the discussion after the proof of Lemma \ref{lem:nodal} below.) 

Recall that the type $\boldsymbol{x}$ determines the virtual dimension of a component of the moduli of orbifold stable maps containing $\big(f, (\PP^1, \bz), \xi \big)$ as well as the orbifold structure of domain orbi-Riemann sphere $(\PP^1, \bz)$ (Remark \ref{rmk:c1vanvirdim}). In fact, the virtual dimension is given as
$$\text{vir.dim} \,\OL{\CM}_{0, 3}(\PP^1_{a,b,c}, J, \beta, \boldsymbol{x}) = 2  - 2 \iota(\boldsymbol{x}).$$
As we only consider the $0$-dimensional moduli for the quantum product, this gives a restriction on the type, that is, $\iota(\boldsymbol{x})=1$. If we impose an additional condition on the degrees of inputs for holomorphic orbi-sphere of type $\boldsymbol{x}$ with $\iota(\boldsymbol{x})=1$, we can show that it is actually an orbifold covering. This will be shown in Section \ref{subsec:orbsphorbcover}.

As the first step, we show that there is no contribution to the quantum product from a degenerate orbi-sphere, which is an element lying on the boundary of $\OL{\CM}_{0,3,\beta}(\PP^1_{a,b,c})$.

\subsection{Considerations on degenerate maps}
Note that $\Delta_\circ^{i}$ are cohomology classes of nontrivial sectors of $\PP^1_{a,b,c}$.
We want to show that all the holomorphic orbi-spheres $u : \big(\Sigma,\bz = (z_1, z_2, z_3) \big) \to \PP^1_{a,b,c}$
of appropriate type $\boldsymbol{x} = (x_1, x_2, x_3)$, a triple of twisted sectors of $\PP^1_{a,b,c}$,
can not have any nodal singularity. More precisely, the above ``appropriate'' means that the $\boldsymbol{x}$ is a type with $\text{vir.dim} \,\OL{\CM}_{0, 3}(\PP^1_{a,b,c}, J, \beta, \boldsymbol{x}) = 0$ for all $\beta$. Here, since $\PP^1_{a,b,c}$ is elliptic, the virtual dimension does not depend on $\beta$ (Remark \ref{rmk:c1vanvirdim}).

\begin{lemma}\label{lem:nodal}
There are no degenerate (i.e., nodal) holomorphic orbi-spheres which are non-constant and contribute to $\langle \Delta^i_\circ, \Delta^j_\bullet, \Delta^k_\diamond \rangle^{\PP^1_{a,b,c}}_{0,3}$ for $i + j + k = 1$.
\end{lemma}
\begin{proof}

There are two classes of degenerate maps which are possibly contained in the boundary of the moduli space $\OL{\CM}_{0, 3}(\PP^1_{a,b,c}, J, \beta, \boldsymbol{x})$:
	\begin{enumerate}
		\item $u_1 : \PP^1_{\alpha,\beta,\bullet}\,\sqcup_{\bullet}\, \PP^1_{\bullet, \delta} \to \PP^1_{a,b,c}$,
		\item $u_2 : \PP^1_{\alpha, \beta, \delta,\circ}\,\sqcup_{\circ}  \, \PP^1_{\circ} \to \PP^1_{a,b,c}$,
	\end{enumerate}
where $\bullet$ and $\circ$ are the order of local isotropy group of the nodal point. (See Figure \ref{fig:degen_map}.) Note that $u_i$ ($i = 1,2$) is non-constant map when restricted to the second component of domain, since $u_i$ should be stable. We claim that there can not exist such maps into $\PP^1_{a,b,c}$.

\begin{figure}
\begin{center}
\includegraphics[height=1.9in]{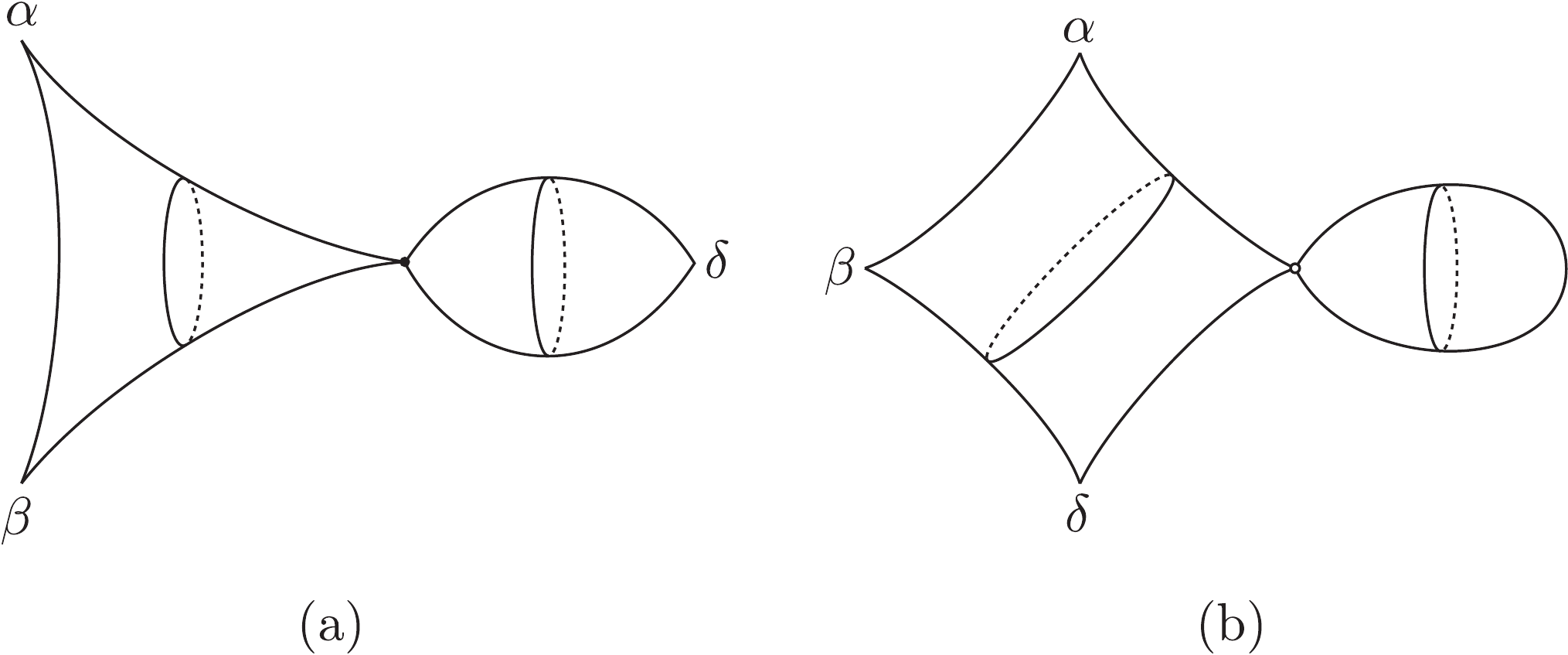}
\caption{Domains of degenerate maps : (a) $\PP^1_{\alpha,\beta,\bullet}\,\sqcup_{\bullet}\, \PP^1_{\bullet, \delta}$ and (b) $\PP^1_{\alpha, \beta, \delta,\circ}\,\sqcup_{\circ}  \, \PP^1_{\circ}$}\label{fig:degen_map}
\end{center}
\end{figure}

First, consider the case of $u : \PP^1_{\delta,\delta} \to \PP^1_{a,b,c}$ ($\bullet = \delta$ in (1)). Since the quotient map $\pi : \PP^1 \to \PP^1_{\delta,\delta}$ (by the obvious $\Z_\delta$-action on $\PP^1$) is holomorphic orbifold covering and $\pi_1 (\PP^1) = 0$, there is a holomorphic map $\WT{u}$ which makes following diagram commutes (using Proposition \ref{prop:cov_lifting} and Lemma \ref{lem:orbimap}).

\begin{equation}
	\xymatrix{
		\PP^1 \ar@{.>}[r]^{\exists \WT{u}} \ar[d]_{\pi} & E \ar[d]^p	\\
		\PP^1_{\delta,\delta} \ar[r]^u & \PP^1_{a,b,c}
	}
\end{equation}

Note that the image of $\WT{u}$ must be homotopic to a constant map since $\pi_2 (E)=0$, and hence $\WT{u}$ is a constant map from the holomorphicity. This contradicts the stability of the map $u$, and hence there is no such holomorphic map $u$. A similar argument shows that $u : \PP^1 \to \PP^1_{a,b,c}$ ($\circ =1$ in (2)) can not exist.

The remaining case is when the second component of the domain is not a good orbifold, $u : \PP^1_{mp,mq} \to \PP^1_{a,b,c}$ for some natural numbers $p$, $q$, and $m$ satisfying $\gcd(p,q)=1$ and $pq \neq 1$. Consider the holomorphic quotient map $\pi : \PP^1_{p,q} \to \PP^1_{mp,mq}$ and the holomorphic map $v := u \circ \pi$. 
\begin{equation*}
	\xymatrix{
		\PP^1_{p,q} \ar[rd]^{v} \ar[d]_{\pi}	\\
		\PP^1_{mp,mq} \ar[r]^u & \PP^1_{a,b,c}
	}
\end{equation*}
Let $x \in \PP^1_{mp,mq}$ be an orbi-singular point and $\WT{x} \in \PP^1_{p,q}$ be the element in $\pi^{-1}(x)$. We may assume that $\WT{x}$ and $u(x)$ have isotropy groups $\Z_p$ for some $p \neq 1$ and $\Z_a$, respectively. Then from the definition of an orbifold map, the map $v$ should be lifted locally to an equivariant map $\WT{v}$ on the local uniformizing charts
\begin{equation}
	\xymatrix{
		U_{\WT{x}} \ar[r]^{\WT{v}} \ar[d] & V_{v(\WT{x})} \ar[d]	\\
		U_{\WT{x}} / \Z_p \ar[r]^v & V_{v(\WT{x})} / \Z_a,
	}
\end{equation}
and the induced group homomorphism between isotropy groups $\phi_{v} : \Z_p \to \Z_a$ should be injective since $\phi_{v} = \phi_{u} \circ \phi_{\pi} : \Z_p \stackrel{\times m}{\to} \Z_{mp} \to \Z_a$ is a composition of two injective morphism. (The injectivity of the second map comes from the definition of an orbifold map. See Definition \ref{def:smoothbetorb}.) 
Hence the generator $g$ of $\pi^{orb}_1 (U_{\WT{x}} / \Z_p )$ should be mapped to an order $p$ element of $\pi^{orb}_1 (V_{v(\WT{x})} / \Z_a )$. However, from the van Kampen's theorem, $\pi^{orb}_1 (\PP^1_{p,q}) = \{0\}$, so the image of $g$ in $\pi^{orb}_1 (\PP^1_{p,q})$ is zero whereas  $\pi^{orb}_1 (\PP^1_{a,b,c}) = \left\langle g_1, g_2, g_3 \, \big{|} \, g_1^a= g_2^b = g_3^c= g_1 g_2 g_3 = 1 \right\rangle$ is nontrivial. Note that the homomorphism $\pi^{orb}_1 (V_{v(\WT{x})} / \Z_a ) \to \pi^{orb}_1 (\PP^1_{a,b,c})$ induced from the inclusion map $\iota: V_{v(\WT{x})} \to \PP^1_{a,b,c}$ is injective, since $\PP^1_{a,b,c}$ is a good orbifold (See for example \cite[Prop.1.18]{D}).  This gives a contradiction.
\end{proof}

Consider an orbifold stable map $\big(f, (\PP^1, \bz), \xi \big)$ with three makings of type $\boldsymbol{x}$.
If there is a smooth point in $\bz$, $f$ can be thought of as a map from an orbi-sphere with two singular points. Then exactly the same argument in the proof of Lemma \ref{lem:nodal} implies that $f$ is indeed a constant map.

\subsection{Orbifold coverings of $\mathbb{P}^1_{a,b,c}$ contributing to the quantum product}\label{subsec:orbsphorbcover}
In this section, we prove that holomorphic orbi-spheres satisfying certain properties become orbifold covering maps. Most of holomorphic orbi-spheres contributing to the quantum product of elliptic $\mathbb{P}^1_{a,b,c}$ will turn out to satisfy these properties, later. (There is only one  exceptional case for $\PH$ where non-trivial holomorphic orbi-spheres from the hyperbolic orbifold $\PP^1_{3,6,6}$ contribute to the quantum product of $\PH$.)

Let $u$ be a holomorphic orbi-sphere from $\PP^1_{\alpha, \beta, \delta}$ to $\PP^1_{a,b,c}$ and consider the universal covering map $\pi : \WT{\PP^1_{\alpha, \beta, \delta}} \to \PP^1_{\alpha, \beta, \delta}$ and $p : \C \to \PP^1_{a,b,c}$. Here, $p$ is a holomorphic map since the complex structure on $\PP^1_{a,b,c}$ comes from that on its universal cover. 
From orbifold covering theory, we obtain a lifting $\WT{u}$ of $u \circ \pi$ of the underlying orbifold morphism $u : \PP^1_{\alpha, \beta, \delta} \to \PP^1_{a,b,c}$:
\begin{equation}\label{liftingmap}
	\xymatrix{
		\WT{\PP^1_{\alpha, \beta, \delta}} \ar@{.>}[r]^{\exists \WT{u}} \ar[d]^{\pi} \ar[dr]^{u\circ \pi} & \C \ar[d]^{p} \\
		\PP^1_{\alpha, \beta, \delta} \ar[r]^{u} & \PP^1_{a,b,c}.
		}
\end{equation}
For each equivalence class $[u] \in \CM^{\text{reg}}_{0,3,\beta} (\PP^1_{a,b,c})$, if we choose a representative $u$ of $[u]$ by fixing the location of three special points on the domain (denoted by $\PP^1_{\alpha, \beta, \delta}$), there is no further equivalence relation since there is a unique automorphism which sends given three point to the other. For such $u$, the lifting $\WT{u}$ is holomorphic, since it is locally holomorphic. 

To avoid notational complexity, let us write ${X}$ for $\PP^1_{a,b,c}$, and consider a triple of twisted sectors $\boldsymbol{x} = ( {X}_{( g_1 )}, {X}_{( g_2 )}, {X}_{( g_3 )} )$. Let $G_i$ be an isotropy group of a point in ${X}_{(g_i)}$ which is defined up to conjugacy.
Since ${X}$ is one dimensional, the age of an element of $G_i$ is given by $\iota_{(g_i)} = \frac{l_i}{|G_i |}$ for some $l_i \in \{1, \cdots, |G_i |-1\}$. Since we only count 0-dimensional strata of moduli of orbifold stable maps for the quantum product, we assume that
	\begin{equation}\label{eq:vdimzero}
		\sum_{i=1}^{3} \iota_{(g_i)} = 1.
	\end{equation}

From Definition \ref{def:orbcover}, we see that the necessary condition for $u$ to be an orbifold covering map is that
	\begin{equation}\label{eq:orb-insert}
		\iota_{(g_i )}^{-1} \in \Z \quad i = 1,2,3,
	\end{equation}
or equivalently $l_i \mid |G_i|$ for $i=1,2,3$.
Nonetheless, this condition \eqref{eq:orb-insert} is indeed sufficient to guarantee that $u$ is an orbifold covering map:

\begin{lemma}\label{lem:u_orb_cover}
If $u$ is a non-constant holomorphic orbifold stable map from $(\PP^1, \bz, \bm)$ to ${X}$ of the type $\boldsymbol{x}$ satisfying \eqref{eq:vdimzero} and \eqref{eq:orb-insert}, then $u$ is an orbifold covering map. Here, $\bz = (z_1, z_2, z_3)$ is a triple of marked points, and $\bm=(m_1, m_2, m_3)$ is the triple of orders of isotropy groups of $\bz$.
\end{lemma}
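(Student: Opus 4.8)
The plan is to show that the two numerical hypotheses force the domain orbifold to be Euclidean as well, so that its universal cover is again $\C$, and then to prove that the induced holomorphic map between the two flat planes is affine by a Liouville-type argument. Being affine, it is an unramified biholomorphism, and reading off the local models exhibits $u$ as an orbifold covering in the sense of Definition \ref{def:orbcover}.

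First I would pin down the orbifold structure of the domain from the type $\boldsymbol{x}$. At the marked point $z_i$ the stability of $u$ forces the local homomorphism $\Z_{m_i}\to G_i$ (with $|G_i|=n_i\in\{a,b,c\}$) to be injective, sending the canonical generator to the monodromy $g_i=e^{2\pi\sqrt{-1}\,l_i/n_i}$. Since $g_i$ then has order exactly $m_i$, we get $m_i=n_i/\gcd(l_i,n_i)$. Now invoke \eqref{eq:orb-insert}: $l_i\mid n_i$ gives $\gcd(l_i,n_i)=l_i$, so that $1/m_i=l_i/n_i=\iota_{(g_i)}$ and in particular $m_i\mid n_i$. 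Summing and using \eqref{eq:vdimzero} yields $\tfrac1{m_1}+\tfrac1{m_2}+\tfrac1{m_3}=\sum_i\iota_{(g_i)}=1$. Hence $\PP^1_{\alpha,\beta,\delta}=\PP^1_{m_1,m_2,m_3}$ is a Euclidean orbifold and its universal cover $\WT{\PP^1_{\alpha,\beta,\delta}}$ is biholomorphic to $\C$. This is the conceptual crux: the hypotheses are exactly what is needed to place the domain in the same flat world as the target.

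Next I would use the lifting diagram \eqref{liftingmap}: Proposition \ref{prop:cov_lifting} together with Lemma \ref{lem:orbimap} produces a holomorphic lift $\WT{u}:\C\to\C$ of $u\circ\pi$, which by the second axiom of Definition \ref{def:orbimaps} is equivariant with respect to a homomorphism $\rho:\pi_1^{orb}(\PP^1_{\alpha,\beta,\delta})\to\pi_1^{orb}(\PP^1_{a,b,c})$. Both of these orbifold fundamental groups act on $\C$ by orientation-preserving Euclidean isometries $z\mapsto\mu z+v$ with $|\mu|=1$. Let $L$ be the (finite-index, rank-two) translation sublattice of $\pi_1^{orb}(\PP^1_{\alpha,\beta,\delta})$. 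For $\lambda\in L$ the relation $\WT{u}(z+\lambda)=\mu_\lambda\WT{u}(z)+v_\lambda$ differentiates to $\WT{u}'(z+\lambda)=\mu_\lambda\WT{u}'(z)$, so $|\WT{u}'|$ is $L$-periodic; it therefore descends to the compact torus $\C/L$ and is bounded. By Liouville's theorem $\WT{u}'$ is constant, and since $u$ (hence $\WT{u}$) is non-constant we obtain $\WT{u}(z)=cz+d$ with $c\neq0$, an affine biholomorphism of $\C$.

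Finally I would translate this back into the local covering picture. Because $\WT{u}$ is an everywhere unramified biholomorphism and $\pi,p$ are the orbifold universal covers, the uniformizing-chart lift of $u$ at each $z_i$ is unramified, so in charts $u$ has the form $\WT{U}/\Z_{m_i}\to\WT{U}/\Z_{n_i}$ with $\Z_{m_i}\le\Z_{n_i}$ (using $m_i\mid n_i$ from the first step), while over a smooth point $u$ is a local biholomorphism; these are precisely the local models required by Definition \ref{def:orbcover}, so $u$ is an orbifold covering map. I expect the main obstacle to be the first paragraph — recognizing that \eqref{eq:vdimzero} and \eqref{eq:orb-insert} conspire to make the domain Euclidean — together with the careful bookkeeping in the last step that matches the ramification data ($k=1$, $m_i\mid n_i$) to the covering local model; the Liouville step itself is short once the equivariance and flat structures are in place.
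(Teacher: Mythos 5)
Your proof is correct, but it takes a genuinely different route from the paper's. The paper treats $u$ as a branched cover of the underlying $\CP^1$'s and runs the Riemann--Hurwitz formula together with the degree count over the three cone points; the hypotheses \eqref{eq:vdimzero} and \eqref{eq:orb-insert} enter only through the inequality \eqref{eq:main}, which forces all the excess ramification data to vanish ($a_i=0$, $e_j=f_k=g_l=1$), i.e.\ every local degree is exactly what the covering local model of Definition \ref{def:orbcover} demands. You instead observe that the same two hypotheses force $\sum_i 1/m_i=1$, so the \emph{domain} is one of the Euclidean triangle orbifolds, and then uniformize both sides by $\C$ and use equivariance under the translation lattice plus Liouville to make the lift affine. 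Both arguments are sound. What yours buys: it simultaneously establishes the linearity of the lift, which the paper only proves afterwards (Proposition \ref{prop:lambdaz} and its analogues in Section \ref{sec:236244}) and there only by first invoking this very lemma to see that $u\circ p$ is a universal covering; your Liouville step is cleaner and self-contained. What the paper's approach buys: being purely combinatorial, it extends verbatim to hyperbolic targets (as the remark following the lemma notes, \eqref{eq:main} survives when $c_1(TX)<0$), whereas your argument is tied to the flat geometry and would need a Schwarz-lemma substitute there. Two small points: the injectivity of $\Z_{m_i}\to G_i$ comes from the representability condition in Definition \ref{def:crobstab} rather than from stability, and your final paragraph is terse --- to fully match Definition \ref{def:orbcover} you should note that the equivariance of the affine biholomorphism $\WT{u}$ identifies the stabilizer of each $\tilde{y}$ in the domain deck group with a subgroup of the stabilizer of $\WT{u}(\tilde{y})$ in the target deck group, which is exactly the required local model $V/H\to V/G$, $H\le G$, at every point of every fiber (not only at the marked points). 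Neither issue is a gap.
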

\begin{proof}
Recall that any non-constant holomorphic map between Riemann surfaces is a branched covering. 
We want to show that this map is an orbifold covering (See Definition \ref{def:orbcover}).

Consider three orbi-singular points $\{ w_1, w_2, w_3 \}$ in the target space and their inverse image $u^{-1}(w_i)$.
Since $\boldsymbol{x}$ consists of twisted sectors, there is a function $I : \{1,2,3\} \to \{1,2,3\}$ such that $u(z_i) = w_{I(i)} \in {X}_{(g_i)}$.
We denote the number of points in $u^{-1}(w_i) - \{z_1, z_2, z_3\}$ by  $m(w_i)$. 

Note that each $z_i$ has degree $3a_i + l_i$ for some $a_i \in \N_{\geq 0}$ where $l_i = \iota_{( g_i )}  | G_i |$.
The other points in the inverse image $u^{-1} (w_1)$ has local degree which is a multiple of $a$, say $a e_j$ for some $e_j \in \N$ ($j = 1, \cdots, m(w_1)$). 
Similarly, in $u^{-1}(w_2)$ and $u^{-1}(w_3)$,
there are $m(w_2)$ and $m(w_3)$ number of points with degree $b f_k$ and $c g_l$ for $1 \leq k \leq m(w_2) $ and $1 \leq l \leq m(w_3)$, respectively.

Since any orbifold Riemann surface $(\Sigma_g, \bz)$ is analytically isomorphic to a smooth Riemann surface $\Sigma_g$, $u$ can be regarded as a branched covering map between two $\CP^1$'s. In particular, one can apply the Riemann-Hurwitz formula to $u$ so that
\begin{align}\label{eq:RHur}
2 \leq 2d - \left\{ \sum_{i=1}^3 (|G_i| a_i + l_i - 1) + \sum_{j=1}^{m(w_1)}(a e_j - 1) + \sum_{k=1}^{m(w_2)}(b f_k - 1) + \sum_{l=1}^{m(w_3)}(c g_l - 1) \right\}
\end{align}
where $d$ is the degree of $u$. (Here, $2$ in the left hand side is the topological Euler characteristic of $\PP^1_{a,b,c}$.) If $u$ does not have any branching outside $u^{-1}(w_1) \cup u^{-1}(w_2) \cup u^{-1}(w_3)$, then the equality holds in \eqref{eq:RHur}. Since $d$ is the weighted count of the  number of points in the fiber $u^{-1} (w_i)$ of $u$, we have
\begin{equation}\label{eq:ddeginv}
\begin{array}{rl}
d 	&= \displaystyle\sum_{i \in I^{-1}(1)}(|G_i| a_i + l_i ) + a \sum_{j = 1}^{m(w_1)} e_j = \sum_{i \in I^{-1}(2)}(|G_i| a_i + l_i ) + b \sum_{k = 1}^{m(w_2)} f_k \\
&= \displaystyle\sum_{i \in I^{-1}(3)}(|G_i| a_i + l_i ) + c \sum_{l = 1}^{m(w_3)} g_l,
\end{array}
\end{equation}
and hence by inserting \eqref{eq:ddeginv} in the \eqref{eq:RHur},
\begin{equation}\label{eq:ddeginv1}
d \leq 1 + \sum_{j = 1}^{3} m(w_j).
\end{equation}

\eqref{eq:ddeginv} together with $e_j \geq 1$ implies that
\begin{align}\label{eq:ddeginv2}
d \geq \sum_{i \in I^{-1}(1)}(|G_i| a_i + l_i ) + a m(w_1),
\end{align}
and similar inequalities also hold for $w_2$ and $w_3$.

Combining \eqref{eq:ddeginv1} and \eqref{eq:ddeginv2},
\begin{equation}\label{eq:main}
\begin{array}{l}
abc \left( \displaystyle\sum_{i=1}^3 m(w_i) \right)+ bc \displaystyle\sum_{i \in I^{-1}(1)} \big(|G_i| a_i + l_i \big) \\+ ca \displaystyle\sum_{i \in I^{-1}(2)}\big(|G_i| a_i + l_i \big)  + ab \displaystyle\sum_{i \in I^{-1}(3)}\big(|G_i| a_i + l_i \big) \\ 
	\leq (bc + ca + ab) d = abc d
\end{array}
\end{equation}
where the last equality follows from $\frac{1}{a} + \frac{1}{b} + \frac{1}{c} = 1$.

Note that $\sum_{i =1}^3 \frac{l_i}{|G_i|} = 1$ from the condition \eqref{eq:vdimzero}.
Hence
\begin{align}\label{eq:vdimzero1}
bc \sum_{i \in I^{-1}(1)} l_i + ca \sum_{i \in I^{-1}(2)} l_i+ ab \sum_{i \in I^{-1}(3)} l_i = abc.
\end{align}

Since $|G_i| = a$ if $i \in I^{-1}(1)$, (and with similar equalities for other two cases)
\begin{align}\label{eq:aux}
bc \sum_{i \in I^{-1}(1)} (|G_i| a_i) + ca \sum_{i \in I^{-1}(2)}(|G_i| a_i) + ab \sum_{i \in I^{-1}(3)}(|G_i| a_i) = abc \sum_{i = 1}^3 a_i.
\end{align}

Combining \eqref{eq:main} with \eqref{eq:ddeginv1}, \eqref{eq:vdimzero1}, and \eqref{eq:aux}, it follows that
\begin{align}\label{eq:comb}
abc \left( \sum_{i=1}^3 m(i) + \sum_{i = 1}^3 a_i  + 1 \right) \leq abc \left( 1 +  \sum_{j = 1}^{3} m(w_j) \right),
\end{align}
hence $a_1 = a_2 = a_3 = 0$ from the non-negativity of $a_i$'s.

If we do not use the inequality \eqref{eq:ddeginv2} and proceed, we have the following more precise estimate 
\begin{align*}
abc \left( \sum_{j=1}^{m(w_1)} e_j + \sum_{k=1}^{m(w_2)} f_k + \sum_{l=1}^{m(w_3)} g_l + \sum_{i = 1}^3 a_i + 1 \right) \leq abc \left( 1 +  \sum_{j = 1}^{3} m(w_j) \right)
\end{align*}
which implies $e_j = f_k = g_l =1$ for all $j,k,l$. Therefore, $u$ is an orbifold covering.
\end{proof}

\begin{remark}
For the case of $c_1 (T{{X}}) < 0$ (i.e., ${X} = \PP^1_{a,b,c}$ is hyperbolic), the same lemma also holds since \eqref{eq:main} is still valid.
However, if $c_1 (T {X}) > 0$ (i.e., ${X}$ is spherical), the argument in Lemma \ref{lem:u_orb_cover} is not true any more.

%
\end{remark}

\subsection{Regularity of holomorphic maps}

Finally, we show that holomorphic orbi-spheres which become orbifold coverings of elliptic $\mathbb{P}^1_{a,b,c}$ are Fredholm regular.                                                                                                                                                                                                                                                                                                                                                                                                                                                                                                                                                                                                                                                                                                                                                                                                                                                                                                                                                                                                                                                                                                                                                                                                                                                                                                                                                                                                                                                                                                                                                                     We first recall the definition of the desingularization of an orbi-bundle, and examine some  properties of the desingularized bundle which will be used for proving the regularity.

Let $(\Sigma, \bz, \bm)$ be an orbi-Riemann surface and consider an orbi-bundle $E \to (\Sigma, \bz, \bm)$. The desingularization of $E$ is defined as follows.
For each disc neighborhood $D_i$ of orbi-singular points $z_i$ in $\bz$, $E$ can be uniformized by $(D_i \times \C^n, \Z_{m_i}, \pi)$ so that the action is linear and diagonal. Hence the action can be written as
\begin{equation}
	e^{\frac{2\pi \sqrt{-1}}{m_i}} \cdot \left( z, f \right) = \left( e^{\frac{2\pi \sqrt{-1}}{m_i}} z, \text{diag} \left(e^{\frac{2\pi \sqrt{-1} m_{i,1}}{m_i}}, \cdots, e^{\frac{2\pi \sqrt{-1} m_{i, n}}{m_i}}\right) f \right)
\end{equation}
for some integers $0 \leq m_{i,j} < m_i$ ($j = 1, \cdots, n$).
Let $\Phi_i :  D^* \times \C^n \to D^* \times \C^n$ be  a $\Z_{m_i}$-equivariant map over the punctured disc $D^* = D_i -\{0\}$ defined by
\begin{equation}\label{eq:localdesing}
	\Phi_i (z, f_1, \cdots, f_n) = \left(z^{m_i}, z^{-m_{i,1}} f_1, \cdots,  z^{-m_{i,n}} f_n\right)
\end{equation}
where the $\Z_{m_i}$ trivially acts on the right side. 
Consider the natural map $\phi : (\Sigma, \bz, \bm) \to |\Sigma|$ which can be written as $z \to z^{m_i}$ over each $D_i$. Then the local holomorphic chart on $|\Sigma|$ is $w = z^{m_i}$ over each $\phi (D_i)$.
We construct a complex vector bundle $|E|$ over the underlying space $|\Sigma|$ of $(\Sigma, \bz, \bm)$ by extending the complex vector bundle over $|\Sigma| - \{ z_1, \cdots z_k \}$ whose trivialization is given by the right hand side of \eqref{eq:localdesing}.

Chen and Ruan \cite{CR1} observed that the first Chern number of an orbi-bundle is the sum of the first Chern number of its de-singularization and the ages of representations which are induced from local trivialization of the orbi-bundle over orbi-singular points.
More precisely, let $E$ be an orbi-bundle over a closed orbi-Riemann surface $(\Sigma, \bz, \bm)$ and set $\bm = (m_1, \cdots, m_k)$ for $m_i \in \Z_{\geq 0}$. Then for each orbi-singular points $z_i$, the induced representation $\rho_i :\Z_{m_i} \to \text{End}(\C^n)$ can be written as
\begin{equation*}
	\rho_i \left(e^{\frac{2\pi \sqrt{-1}}{m_i}} \right) = \text{diag} \left(e^{\frac{2\pi \sqrt{-1} m_{i,1}}{m_i}}, \cdots, e^{\frac{2\pi \sqrt{-1} m_{i, n}}{m_i}}\right)
\end{equation*}
for some integers $0 \leq m_{i,j} < m_i$ ($j = 1, \cdots, n$).
Then 
\begin{equation}\label{eq:desingbchern}
	c_1 (E) ([\Sigma]) = c_1 (|E|) ([\Sigma]) + \sum_{i=1}^{k} \sum_{j=1}^{n} \frac{m_{i,j}}{m_i}.
\end{equation}

For a holomorphic orbi-bundle $E \to {X}$, let us denote sheaves of holomorphic sections of $E$ and $|E|$ over ${X}$ and $|{X}|$ by $\mathcal{O}(E)$ and $\mathcal{O}(|E|)$, respectively. 
Then we have $\mathcal{O}(E) = \mathcal{O}(|E|)$ \cite[Proposition 4.2.2]{CR1} from the removability of isolated singularities of $J$-holomorphic maps. In detail, if $g : D_i \to \C^n$ is a local holomorphic section over $|E| \big|_{D_i} \to D_i$, then $g(w) = ( g_1 (w), \cdots, g_n (w) )$ for some holomorphic maps $g_i : D_i \to \C$ with respect to the trivialization taken as above. If we pullback this section via $\Phi_i$, then the corresponding section on $E|_{D_i} \to D_i$ is the holomorphic map $f : D_i \to \C^n$ whose components are $f_j (z) = z^{m_{i,j}} g_j(z^{m_i})$ for each $j = 1, \cdots, n$.

Conversely, let $f = (f_1, \cdots f_n) : D_i \to \C^n$ be a given local holomorphic section on orbi-bundle $E|_{D_i} \to D_i$, i.e., $f$ is a $\Z_{m_i}$-equivariant holomorphic section. Define a map $g : D^* \to \C^n$ whose components are $g_j(w) := z^{-m_{i,j}} f_j(z)$ for $z = w^{\frac{1}{m_i}}$. Note that the $\Z_{m_i}$-equivariantness of $f$ says that the section $g$ is well-defined, although there is an ambiguity on the choice of brach cut for $z = w^{\frac{1}{m_i}}$. Moreover, if we expand the holomorphic function $f_j$ as
$$
f_j(z) = \sum_{n=0}^\infty a_{n,j} z^n,
$$
then $a_{n,j} = 0$ unless $n \equiv m_{i,j}$ modulo $m_i$. Thus, the $\left| \frac{f_j(z)}{z^{m_{i,j}}} \right|$ is bounded on $D_i$, and we conclude that $g$ can be extended to a holomorphic section over $D_i$ using the Riemann extension theorem.
One can easily check that this process is the inverse of the pullback via $\Phi_i$, which gives $\CO(E) = \CO(|E|)$.
%
%

Now we prove the Fredholm regularity of holomorphic orbi-spheres in elliptic $\mathbb{P}^1_{a,b,c}$, which are orbifold coverings.

\begin{prop}\label{lem:reg}
Let ${X}$ be an elliptic orbi-sphere and $(\Sigma, \bz)$ be a domain Riemann orbi-curve. If $u : \Sigma \to {X}$ is a holomorphic orbi-sphere satisfying \eqref{eq:vdimzero} and \eqref{eq:orb-insert}, then $u$ is regular.
\end{prop}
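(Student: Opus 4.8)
The plan is to identify the cokernel of the linearized Cauchy--Riemann operator at $u$ with an orbifold Dolbeault cohomology group and then to show that this group vanishes. First I would invoke Lemma \ref{lem:u_orb_cover}: the hypotheses \eqref{eq:vdimzero} and \eqref{eq:orb-insert} guarantee that $u$ is an orbifold covering map. Since for an elliptic $\PP^1_{a,b,c}$ the complex structure is inherited from the flat structure on its elliptic-curve cover, the almost complex structure $J$ is integrable; hence the linearization $D_u$ of $\dbar_J$ coincides with the holomorphic $\dbar$-operator on the pulled-back orbi-bundle $u^\ast T{X}$, and Fredholm regularity of $u$ is equivalent to the vanishing of $H^{0,1}(\Sigma, u^\ast T{X}) \cong H^1(\Sigma, \CO(u^\ast T{X}))$.

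The key observation is that an orbifold covering is \'etale: in the local model $\WT{U}/G' \to \WT{U}/G$ of Definition \ref{def:orbcover} the map is induced by the identity of $\WT{U}$, so the differential $du : T\Sigma \to u^\ast T{X}$ is an isomorphism of holomorphic orbi-line bundles. (At a marked point $z_i$ of order $m_i$ the generator of $\Z_{m_i}$ acts by $e^{2\pi\sqrt{-1}/m_i}$ on both sides, so the isotropy representations also match.) Thus $u^\ast T{X} \cong T\Sigma$, and it suffices to prove $H^1(\Sigma, \CO(T\Sigma)) = 0$.

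Next I would pass to the desingularization. By the identity $\CO(E)=\CO(|E|)$ recalled above (from \cite[Proposition 4.2.2]{CR1}), we have $H^1(\Sigma, \CO(T\Sigma)) \cong H^1(|\Sigma|, \CO(|T\Sigma|))$, where $|\Sigma| \cong \PP^1$. It then remains to compute the degree of $|T\Sigma|$. Since $u$ is an orbifold covering of an elliptic orbifold, $c_1(T\Sigma)([\Sigma]) = \chi^{orb}(\Sigma) = \deg(u)\,\chi^{orb}({X}) = 0$, and the $\Z_{m_i}$-action on the tangent line at $z_i$ has age $1/m_i$, whose sum over the three orbifold points equals $1$ because $\chi^{orb}(\Sigma)=0$. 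Plugging into \eqref{eq:desingbchern} gives
\[
\deg |T\Sigma| \;=\; c_1(T\Sigma)([\Sigma]) - \sum_{i} \frac{1}{m_i} \;=\; 0 - 1 \;=\; -1,
\]
so $|T\Sigma| \cong \CO_{\PP^1}(-1)$ and hence $H^1(|\Sigma|, \CO(|T\Sigma|)) = H^1(\PP^1, \CO(-1)) = 0$, proving that $u$ is regular.

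The step I expect to be the main obstacle is the first: justifying rigorously, in the orbifold setting, that the cokernel of the Fredholm operator $D_u$ on the appropriate Sobolev completion of sections of $u^\ast T{X}$ is the orbifold Dolbeault group $H^1(\Sigma, \CO(u^\ast T{X}))$. This requires the orbifold analogue of the standard fact that integrable $J$ forces $D_u = \dbar$, together with the elliptic-regularity and removable-singularity arguments (packaged into $\CO(E)=\CO(|E|)$) that allow one to transfer the computation to the desingularization $|\Sigma|$. Once this identification is in place, the remaining topological and cohomological computations are forced by $c_1^{orb}(T{X}) = 0$ and are routine.
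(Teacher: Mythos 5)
Your proposal is correct and follows essentially the same route as the paper's proof: integrability of $J$ reduces regularity to the vanishing of $H^{0,1}_{\dbar}(\Sigma, u^\ast T{X})$, which is transferred via $\CO(E)=\CO(|E|)$ to the desingularization, whose degree is computed to be $-1$ from \eqref{eq:desingbchern} together with \eqref{eq:vdimzero}, giving the vanishing on $\PP^1$. The only cosmetic difference is that you first identify $u^\ast T{X}\cong T\Sigma$ using the \'etale property of the covering, whereas the paper works directly with $u^\ast T{X}$ and observes $c_1(u^\ast T{X})=0$; both yield the same conclusion.
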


\begin{proof}
Consider the pullback orbi-bundle $u^*T{X} \to \Sigma$ and the linearized $\dbar$-operator $D\dbar_{J} (u) : C^\infty (u^* T{X}) \to \Omega^{0,1}(u^* T{X})$.
Since $J$ is integrable, $D\dbar_{J}(u) = \dbar_{J}$. Hence it is sufficient to show that $H^{0,1}_{\dbar}({X},u^*T{X})=0$.

Note that the first Chern number of the tangent bundle of ${X}$ is zero, and for any orbifold covering $u$, 
that of $u^*T{X}$ is also zero.
From this, we can see that the desingularized bundle of $u^*T{X}$, $|u^*T{X}|$ has (desingularized) Chern number $-1$ since the second term in the right hand side of \eqref{eq:desingbchern} is $1$ from \eqref{eq:vdimzero}. i.e. $c_1 (|u^*T{X}|) = -1$.
Since ${X}$ is a complex orbifold and $u$ is a holomorphic orbi-map, $u^* T{X}$ is a holomorphic orbi-bundle over $\Sigma$.
The desingularization of holomorphic orbi-bundle is also holomorphic.
From the Lemma 3.5.1 in \cite{MS}, this implies that the holomorphic line bundle $|u^*T{X}|$
has vanishing cohomology group $H^{0,1}_{\dbar}(|\Sigma|,|u^*T{X}|)$.
As the sheaf of holomorphic sections of $|u^*T{X}|$ is the same as the sheaf of (orbifold) holomorphic sections of 
$u^*T{X}$ on $\Sigma$, we have the vanishing of $H^{0,1}_{\dbar}(\Sigma,u^*T{X})$.
More precisely,
\begin{align*}\label{dol-sheaf}
H^{0,1}_{\dbar}(|\Sigma|,|u^*T{X}|) &\cong H^1 \big(|\Sigma|,\mathcal{O}(|u^*T{X}|)\big) \\
						&\cong H^1 \big(\Sigma, \mathcal{O}(u^* T{X})\big) \\
						&\cong H^{0,1}_{\dbar}(\Sigma,u^*T{X}).
\end{align*}
For the last isomorphism, note that the two term complex $\dbar_J : C^{\infty}(u^*T{X}) \to \Omega^{0,1}(u^*T{X})$ is a fine resolution for the sheaf of holomorphic sections $\mathcal{O}(u^*T{X})$ as in the smooth case.
\end{proof}

\begin{remark}
Even if $u$ is not an orbifold covering, we still have $c_1 (u^* T{X}) \geq  c_1 (T{X})$. (Indeed, we can improve this inequality by considering the degree of $u$.) Therefore, the above proposition also holds as long as ${X}$ has a non-negative first Chern number. 

For example, when we calculate the quantum cohomology of $\PH$, we need to count holomorphic orbi-spheres $u : \PP^1_{3,6,6} \to \PH$ which are not orbifold covering maps. These orbi-spheres are also Fredholm regular exactly by the same argument.
\end{remark}

%
%

\section{The quantum cohomology ring of $\PT$}\label{sec:qhPT}
%
%

In this section, we explicitly compute the product structure on $QH^\ast_{orb} (\PT, \Q)$, which proves Theorem \ref{thm:main1}. For this, we first classify holomorphic orbi-spheres in $\PT$ (Section \ref{subsec:classorspPT}). Recall from Lemma \ref{lem:nodal} and Proposition \ref{lem:reg} that these stable maps, in fact, are maps from a single orbi-sphere component and are regular. Thus by counting holomorphic orbi-spheres inside $\PT$, we obtain the $3$-fold Gromov-Witten invariant for $\PT$ which combined with the constant map contributions (Section \ref{subsec:contriconst}) gives rise to the quantum product on $QH^\ast_{orb} (\PT, \Q)$. One interesting feature is that one can relate these orbi-spheres with the solutions of a certain Diophantine equation. 
%
%
%

It will turn out in Section \ref{subsec:classorspPT} that only
$$\langle \da{1}{3}, \db{1}{3}, \dc{1}{3} \rangle^{\PT}_{0,3} \quad \mbox{and} \quad \langle \Delta_{i}^{1/3}, \Delta_{i}^{1/3}, \Delta_{i}^{1/3} \rangle^{\PT}_{0,3}$$ 
for $i=1,2,3$ are nontrivial, which precisely give the coefficients $f_0$ and $f_1$ of cubic terms for the Gromov-Witten potential in \cite[Theorem 3.1]{ST}.

\begin{remark}
We will write the details on the classification orbi-spheres in $\PT$ as concrete as possible.
 For other two cases, $\PH$ and $\PQ$, we will find similar classification results in Section \ref{sec:236244}, but without much details as the arguments are not very much different from the one for $\PT$.
\end{remark}

%

\subsection{Classification of orbi-spheres in $\PT$}\label{subsec:classorspPT}
From the expected dimension formula and representability of orbifold stable map, the only possible domain orbi-sphere in 0-dimensional components of the moduli space $\OL\CM_{0,3,\beta}(\PT) $ is $\PT$ itself.
Since there exists a unique biholomorphism $\phi : (\PT, \bz) \to (\PT, \bz')$ sending any triple of orbi-points $\bz = (z_1, z_2, z_3)$ to another $\bz' = (z'_1, z'_2, z'_3)$, there is no domain parameter in the relevant moduli $\OL{\CM}_{0,3,\beta}(\PT)$.
Hence from now on, we take the domain orbi-sphere to be $\PT$ with the fixed conformal structure which is induced by the quotient map $E \to \PT$ in the Section \ref{subsec:GWforP}.

By degree reason, $\langle \Delta_\circ^i, \Delta_\bullet^j, \Delta_\diamond^k \rangle^{\PT}_{0,3}$ is trivial unless $i+j+k=1$. 
Note that by the obvious symmetry on $\PT$, it is enough to consider only the following three cases:
\begin{enumerate}
\item $\langle \da{1}{3}, \da{1}{3}, \da{1}{3} \rangle_{0,3}$,
\item $\langle \da{1}{3}, \db{1}{3}, \dc{1}{3} \rangle_{0,3}$,
\item $\langle \da{1}{3}, \db{1}{3}, \db{1}{3} \rangle_{0,3}$.
\end{enumerate} 
Namely, we may assume that the first marked point $z_1$ in the domain orbi-sphere is mapped to the orbi-singular point $w_1$ in $\PT$ associated with $\da{1}{3}$.

Let $u$ be a holomorphic orbi-sphere from $\PT$ to $\PT$, which contributes to $\langle \da{1}{3}, \Delta_\circ^{1/3}, \Delta_\bullet^{1/3} \rangle_{0,3}$.
 We fix base points of the domain orbi-sphere and the target orbi-sphere of $u$ and their universal coverings as follows: $x_0:=z_1 \in \PT$ and $\WT{x_0}:=0 \in p^{-1}(x) (\subset \C)$ for the domain $\PT$, and $y_0:=w_1 \in \PT$ and $\WT{y_0}:=0 \in p^{-1}(w_1) (\subset \C)$. 
Recall that $p : \C \to \PT$ is the orbifold universal covering, and $u(x_0) = y_0$. 
Thus, we obtain a unique lifting $\WT{u}$ of $u \circ p$ for the underlying holomorphic orbi-sphere $u : \PT \to \PT$:
\begin{equation}\label{liftingmap}
	\xymatrix{
		\C \ar@{.>}[r]^{\exists \WT{u}} \ar[d]^{p} & \C \ar[d]^{p} \\
		\PP^1_{3,3,3} \ar[r]^{u} & \PP^1_{3,3,3}.
		}
\end{equation}

Note that the conditions in the lemma \ref{lem:u_orb_cover} are automatic in this case. Therefore, $u : \PT \to \PT$ is an orbifold covering, and its lifting $\WT{u}$ \eqref{liftingmap} has  a particularly nice shape.

\begin{prop}\label{prop:lambdaz}
If $u$ is a non-constant holomorphic orbi-sphere contributing to $\langle \da{1}{3}, \Delta_\circ^{1/3}, \Delta_\bullet^{1/3} \rangle_{0,3}$,
 then $\WT{u}(z) = \lambda z$ for some $\lambda \in \Z[\tau]$,
 where $\Z[\tau] := \{ a + b \tau \,\, | \,\, a, b \in \Z \}$ 
\end{prop}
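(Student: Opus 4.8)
The plan is to pin down the holomorphic map $\WT{u} : \C \to \C$ using three facts: that $\WT{u}$ is a lift of an orbifold covering $u : \PT \to \PT$, that it intertwines the two deck-transformation actions (the second axiom of orbi-maps in Definition \ref{def:orbimaps}, established for our setting in Lemma \ref{lem:orbimap}), and that it fixes the chosen base point, $\WT{u}(0) = 0$, by our normalization $\WT{x_0} = 0 \mapsto \WT{y_0} = 0$. The deck group of $p : \C \to \PT$ is the affine orbifold fundamental group $\Gamma := \pi_1^{orb}(\PT) \cong \Z[\tau] \rtimes \Z_3$, acting on $\C$ by $w \mapsto \zeta w + \gamma$ with $\zeta$ a cube root of unity and $\gamma \in \Z[\tau]$; its translation subgroup is precisely the lattice $\Z[\tau]$.

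First I would show $\WT{u}$ is affine. Because $u$ is an orbifold covering (which holds automatically here, as noted after diagram \eqref{liftingmap}, via Lemma \ref{lem:u_orb_cover}), $u$ is a local biholomorphism away from the cone points, so $\WT{u}$ is a local biholomorphism away from the discrete lift of the cone points; being holomorphic and, as the lift of a covering, proper of finite degree, $\WT{u}$ extends to a holomorphic self-map of $\C$ whose derivative never vanishes except possibly over ramification, but the covering property forces $\WT{u}$ to be an unramified holomorphic self-map of $\C$, hence an affine isomorphism $\WT{u}(z) = \lambda z + \mu$. The equivariance from Lemma \ref{lem:orbimap} gives, for each deck transformation $\sigma \in \Gamma$, a $\tau_\sigma \in \Gamma$ with $\WT{u} \circ \sigma = \tau_\sigma \circ \WT{u}$, so $\sigma \mapsto \tau_\sigma$ is a homomorphism $\Gamma \to \Gamma$ induced by $u_\ast$ on $\pi_1^{orb}$.

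Next I would extract the two conclusions $\mu = 0$ and $\lambda \in \Z[\tau]$. Evaluating the equivariance relation on pure translations $\sigma_\gamma : w \mapsto w + \gamma$ (the lattice $\Z[\tau]$), affineness forces the corresponding $\tau_{\sigma_\gamma}$ to be a translation as well, namely $w \mapsto w + \lambda\gamma$; since $\tau_{\sigma_\gamma}$ must lie in $\Gamma$ and its translation part lies in the lattice $\Z[\tau]$, we get $\lambda \gamma \in \Z[\tau]$ for all $\gamma \in \Z[\tau]$, and taking $\gamma = 1$ yields $\lambda \in \Z[\tau]$. The vanishing $\mu = 0$ follows directly from the base-point normalization $\WT{u}(0) = 0$, since $u(z_1) = w_1$ and both base points were lifted to $0$. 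Putting these together gives $\WT{u}(z) = \lambda z$ with $\lambda \in \Z[\tau]$, as claimed.

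The main obstacle will be the first step: rigorously upgrading ``$\WT{u}$ is holomorphic and lifts a finite covering'' to ``$\WT{u}$ is affine.'' The cleanest route is to argue that $\WT{u}$ is a surjective holomorphic self-map of $\C$ that is unramified because $u$, being an orbifold covering, has no branching except the prescribed cone-point ramification, and these cone points are exactly where $p$ itself ramifies, so the branching cancels upstairs; an unramified holomorphic self-covering of $\C$ is biholomorphic, hence affine. Alternatively one can invoke the equivariance more heavily, deducing that $\WT{u}'$ descends to a bounded (indeed constant) function by the lattice-periodicity of $\WT{u}'$ coming from the translation equivariance, forcing $\WT{u}' \equiv \lambda$ constant; this Liouville-type argument is perhaps the most economical and is the one I would write out.
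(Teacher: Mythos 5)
Your proposal is correct. Your ``route (a)'' is essentially the paper's argument: the paper notes that since $u$ is an orbifold covering (Lemma \ref{lem:u_orb_cover}), $u\circ p$ is an orbifold universal covering, so by uniqueness of universal coverings $\WT{u}$ is a homeomorphism, hence an entire \emph{proper} holomorphic map, hence a polynomial, hence (being invertible) linear with no constant term by the base-point normalization; then $\lambda=\WT{u}(1)\in\Z[\tau]$ because $\WT{u}$ carries the fiber $p^{-1}(x_0)=\Z[\tau]$ into $p^{-1}(y_0)=\Z[\tau]$ --- your deck-group computation $\lambda\gamma\in\Z[\tau]$ is the same fact in a different guise, since the translation subgroup of $\pi_1^{orb}(\PT)$ acting on $\C$ is exactly that lattice. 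The Liouville argument you say you would actually write out is genuinely different and arguably cleaner: the equivariance $\WT{u}(z+\gamma)=\tau^{k}\WT{u}(z)+\delta$ supplied by Lemma \ref{lem:orbimap} makes $|\WT{u}'|$ (not $\WT{u}'$ itself, but that is all Liouville needs) periodic under $\Z[\tau]$, hence bounded, hence $\WT{u}'$ is constant; this bypasses the covering property of Lemma \ref{lem:u_orb_cover} entirely for this proposition, at no real cost. One caution on your route (a): an unramified entire self-map of $\C$ is not automatically an affine isomorphism ($e^{z}$ is unramified), so you do need the covering/properness input, which your final paragraph correctly supplies.
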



\begin{proof}
Because $u$ is an orbifold universal covering, so is the composition $u \circ p$. Now, by the uniqueness of orbifold universal covering, $\WT{u}$ should be a homeomorphism. 
Note that $\WT{u}$ is an entire proper holomorphic map, since $\WT{u}$ is a homeomorphism and is a lifting of the holomorphic map $u \circ p$. It is well-known that any entire and proper holomorphic map on $\C$ is a polynomial. Since $\WT{u}$ is invertible, we conclude that $\WT{u}$ is a linear map $\WT{u}(z) = \lambda z $ for some $\lambda \in \C$. Here, $\WT{u}$ does not have a constant term because the lifting preserves the base points, $\WT{u} (\WT{x_0}) = \WT{y_0}$ (i.e., $\WT{u} (0) = 0$).

\begin{figure}
\begin{center}
\includegraphics[height=1.8in]{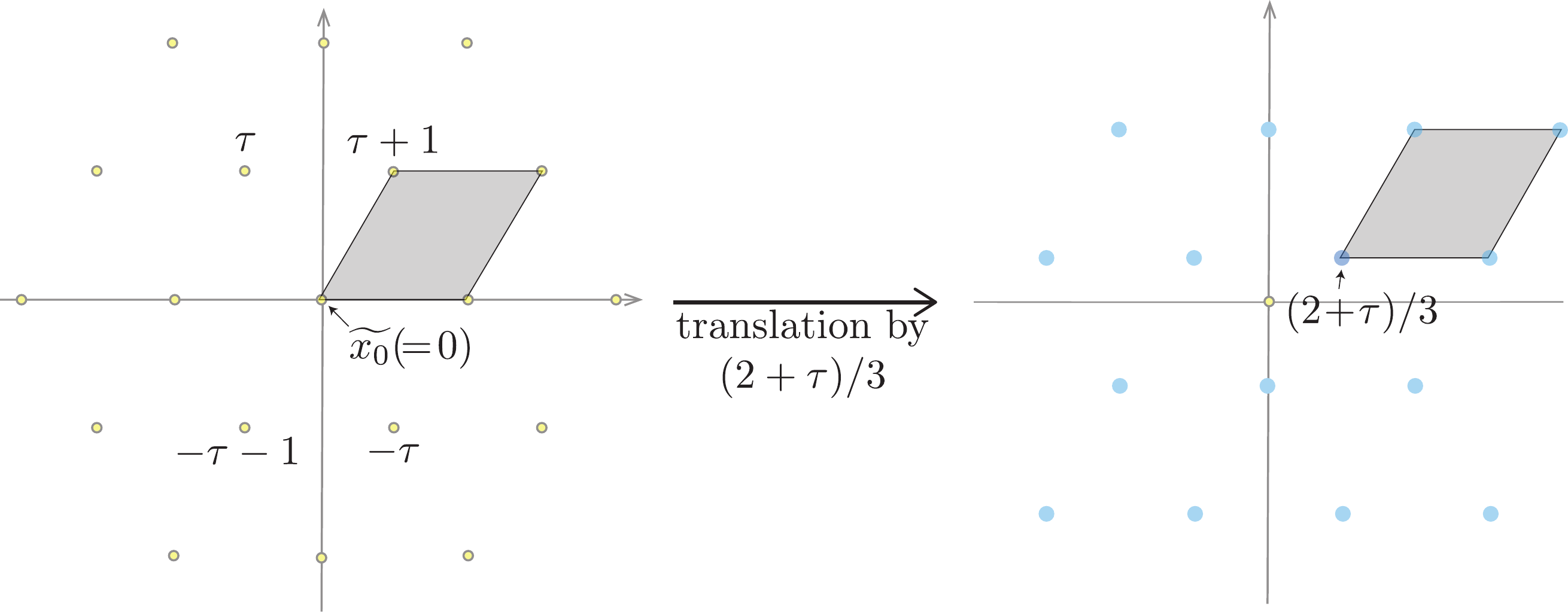}
\caption{The shaded regions show the images of degree-3 maps contributing to $\langle \da{1}{3}, \da{1}{3}, \da{1}{3}  \rangle_{0,3}$ (LHS) and $\langle \db{1}{3}, \db{1}{3}, \db{1}{3}  \rangle_{0,3}$ (RHS)}\label{fig:Ztau}
\end{center}
\end{figure}
%

Recall from Subsection \ref{subsec:GWforP} that $p^{-1}(x_0)$ gives a lattice $\Z \langle 1 ,\tau \rangle = \Z [\tau]\, (\because \tau^2 = -\tau -1) $ in $\C \cong \R^2$. (See the left side of Figure \ref{fig:Ztau}.) Since $1 \in \C$ is an element of this lattice, $\WT{u}$ should map $1$ to a point in $p^{-1} (y_0)$ which is also the same lattice $\Z[\tau]$ in $\C$. Thus, $\lambda= \WT{u} (1)$ has to lie in $\Z[\tau]$, which finishes the proof.
%
%
\end{proof}

It is obvious from the picture that the fundamental domain of the domain orbi-sphere covers that of the target orbi-sphere $|\lambda|^2$-times by the holomorphic map induced by the linear map $z \mapsto \lambda z$ between the universal covers. Another way to see this is to consider the energy $\int |du|^2$ which equals the symplectic area of the holomorphic orbi-sphere $u$. Note that for $\tilde{u} (z) =\lambda z$, $|d \tilde{u}|^2 = |\lambda|^2$. Therefore, such a map induces a term containing $q^{|\lambda|^2}$ in the (3-fold) Gromov-Witten potential.

Conversely, any linear map $\WT{u} = \lambda z$ with a coefficient $\lambda$ in  $\Z[\tau]$ induces a $\Z_3$-equivariant holomorphic map between the middle level torus $E$. The equivariance implies that $\WT{u}$ descends to a holomorphic map $u : \PT \to \PT$. $u$ is a well-defined orbifold morphism between $\PT$, as it is represented by an equivariant map between $E$, and $\PT =  [E / \Z_3]$.

Later, we will establish the one-to-one correspondence between linear maps with $\Z [\tau]$-coefficients modulo ``certain equivalences" and orbi-spheres in $\PT$ contributing to $\langle \da{1}{3}, \Delta_\circ^{1/3}, \Delta_\bullet^{1/3} \rangle_{0,3}$
modulo equivalences given in \eqref{def:GWgauge_equiv}.


\begin{remark}\label{gpoidmap}
Let $\sigma$ be the cyclic permutation $(1,2,3)$ in the permutation group $S_3$ on $3$-letters $\{1,2,3\}$. If $u$ contributes to $\langle \da{1}{3}, \Delta_i^{1/3}, \Delta_j^{1/3} \rangle_{0,3}$, then it is easy to see from Figure \ref{fig:Ztau} that the translation of $\WT{u}$ by an element of $\Z \langle \frac{2+\tau}{3},\frac{1+2 \tau}{3}\rangle$ induces a map $v : \PT \to \PT$ contributing to $\langle \Delta_{\sigma^k (1)}^{1/3}, \Delta_{\sigma^k (i)}^{1/3}, \Delta_{\sigma^k (j)}^{1/3} \rangle_{0,3}$ for some $0 \leq k \leq2$. This explains the coincidence between various $3$-fold Gromov-Witten invariants appearing in \cite[Theorem 3.1]{ST}.
Note that $\Z [\tau]$ is a sub-lattice of $\Z \langle \frac{2+\tau}{3},\frac{1+2 \tau}{3}\rangle$.
\end{remark}

\subsection{Symmetries of the lifting of orbi-maps}\label{subsec:symm333}


We have proved that any orbi-spheres with orbi-insertion  $\da{1}{3}, \Delta_i^{1/3}, \Delta_j^{1/3}$ can be lifted to a linear map $z \mapsto \lambda z$ for $\lambda \in \Z [\tau]$.
We next investigate a natural equivalence relation $\sim$ on $\{ \WT{u} = \lambda z \, | \, \lambda \in \Z [\tau] \}$ such that if $\WT{u}_1 \sim \WT{u}_2$, then these map induce a pair of equivalent orbi-spheres.
Let us denote the set of linear maps $\{ z \mapsto \lambda z | \lambda \in \Z[\tau] \}$ by $L(\C)$.
We now find the equivalence relation on $L(\C)$ such that the set of equivalence classes of $L(\C)$ corresponds  bijectively to the moduli space $\OL{\CM}_{0,3} (\PT;\da{1}{3}, \Delta_i^{1/3}, \Delta_j^{1/3})$ for $1 \leq i,j \leq 3$.

%

Recall that positions of three orbi-markings as well as the domain $\PT$ itself are fixed by regarding $\PT$ as a quotient of $E=\C/\Z\langle 1, \tau \rangle$ via the $\Z/3$-action.
Therefore, we do not have an equivalence from a domain reparametrization and, it is enough to find the condition for two linear maps $\WT{u}_i (z) = \lambda_i z$ $\big(\lambda_i \in \Z[\tau], (i=1,2)\big)$ inducing the same map on the quotient orbifold.

Denote the induced orbi-spheres from $\WT{u}_i$ by $u_i$ for $i=1,2$, and suppose that they have orbi-insertions $\da{1}{3}$, $\Delta_i^{1/3}$ and $\Delta_j^{1/3}$ at $z_1$, $z_2$ and $z_3$, respectively.
Since $u_1$ and $u_2$ are the same orbifold morphism and both of them send $z_1$ to $w_1$, their local liftings should be related by the local isotropy group at $w_1$, which is isomorphic to $\Z_3$ and is generated by the $\tau$-multiplication. On the level of universal covers, this local group can be realized as the local isotropy group at the origin (lying in $p^{-1} (w_1)$) of $\C$. Local groups at other points in the fiber $p^{-1} (w_i)$ can not relate $\WT{u}_1$ and $\WT{u}_2$ since they do not preserve the origin.
%
Consequently, $\lambda_1 = \tau^k \lambda_2$ for some $k = 0, 1, 2$, and this gives the desired equivalence relation on $L(\C)$.

For computational simplicity, we consider another type of symmetry on $\PT$, which is induced from the $(1 + \tau)$-multiplication on $\C$ (Note that $(1 + \tau)^2 = \tau$).
This action gives rise to an action of $\Z/6$ on $L(\C)$ in an obvious way. In view of holomorphic orbi-spheres corresponding to elements of $L(\C)$, this action switches two orbi-insertions $\Delta_i^{1/3}$ and $\Delta_j^{1/3}$ without changing the degree.
Thus, the $(1+\tau)$-multiplication gives the one-to-one correspondence
$$\OL{\CM}_{0,3,d} (\PT;\da{1}{3}, \db{1}{3}, \dc{1}{3}) \longleftrightarrow \OL{\CM}_{0,3,d} (\PT;\da{1}{3}, \dc{1}{3}, \db{1}{3}).$$

In Section \ref{subsec:compP1333}, we will count elements in $L(\C)$ whose underlying holomorphic orbi-spheres contributing $\langle \da{1}{3}, \db{1}{3}, \dc{1}{3} \rangle_{0,3}$ or $\langle \da{1}{3}, \dc{1}{3}, \db{1}{3} \rangle_{0,3}$ simultaneously.
Then, dividing the number of such linear maps by the order of the group generated by the $(1+\tau)$-multiplication which is $6$, we find the presentation of 
$$f_0(q) = \sum_{d \in H_2 ({X}, \Z)} \langle \da{1}{3}, \db{1}{3}, \dc{1}{3} \rangle^{{X}}_{0,3,d} q^d .$$

\subsection{Identification of inputs}\label{subsec:compP1333}
We have shown that a degree-$d$ non-constant holomorphic orbi-sphere in $\PT$ contributing to $\langle \da{1}{3}, \Delta_i^{1/3}, \Delta_j^{1/3} \rangle_{0,3,d}$ has one-to-one correspondence with a linear map $z \mapsto \lambda z$ for some $\lambda = a + b \tau \in \Z[\tau]$ with $|\lambda|^2 =d (\neq 0)$. (Recall that this $d$ is really the degree of the corresponding holomorphic orbi-sphere. See the discussion below the proof of Proposition \ref{prop:lambdaz}.)
We subdivide this set of holomorphic orbi-spheres in terms of their orbi-insertions. Orbi-insertions of the holomorphic orbi-sphere corresponding to $z \mapsto \lambda z$ can be determined in the following way. Note that the triangle with vertices $0$, $\frac{1+2\tau}{3}$ and $\frac{2+\tau}{3}$ in the universal cover of the domain $\PT$ gives a fundamental domain for the upper hemisphere of $\PT$. Thus, we can think of $\frac{1+2\tau}{3}$ and $\frac{2+\tau}{3}$ as (liftings of) the second and the third markings of the domain $\PT$, respectively. See the shaded region in the left side of Figure \ref{fig:input_typeP1333}.

Since $\lambda \in \Z [\tau]$, the images $\lambda \cdot \left(\frac{1+2\tau}{3}\right)$ and $\lambda \cdot \left(\frac{2+\tau}{3} \right)$ will lie in the lattice $\Z \langle \frac{1+2\tau}{3}, \frac{2+\tau}{3} \rangle$ in the universal cover of the target $\PT$. It is clear that the types of these two lattice points determine the orbi-insertions of the orbi-sphere associated with $\lambda$, i.e., if $\lambda \cdot \left(\frac{1+2\tau}{3}\right)$ lies in $\frac{1+2\tau}{3} + \Z \langle 1, \tau \rangle$, then the second orbi-insertion is $\Delta^{1/3}_{2}$ and so on. See Figure \ref{fig:input_typeP1333}.
(Note that $z \mapsto \lambda z$ always send the origin to the origin, which is related to the fact that we fix the first orbi-insertion as $\da{1}{3}$ using the symmetry.)

\begin{figure}
\begin{center}
\includegraphics[height=2in]{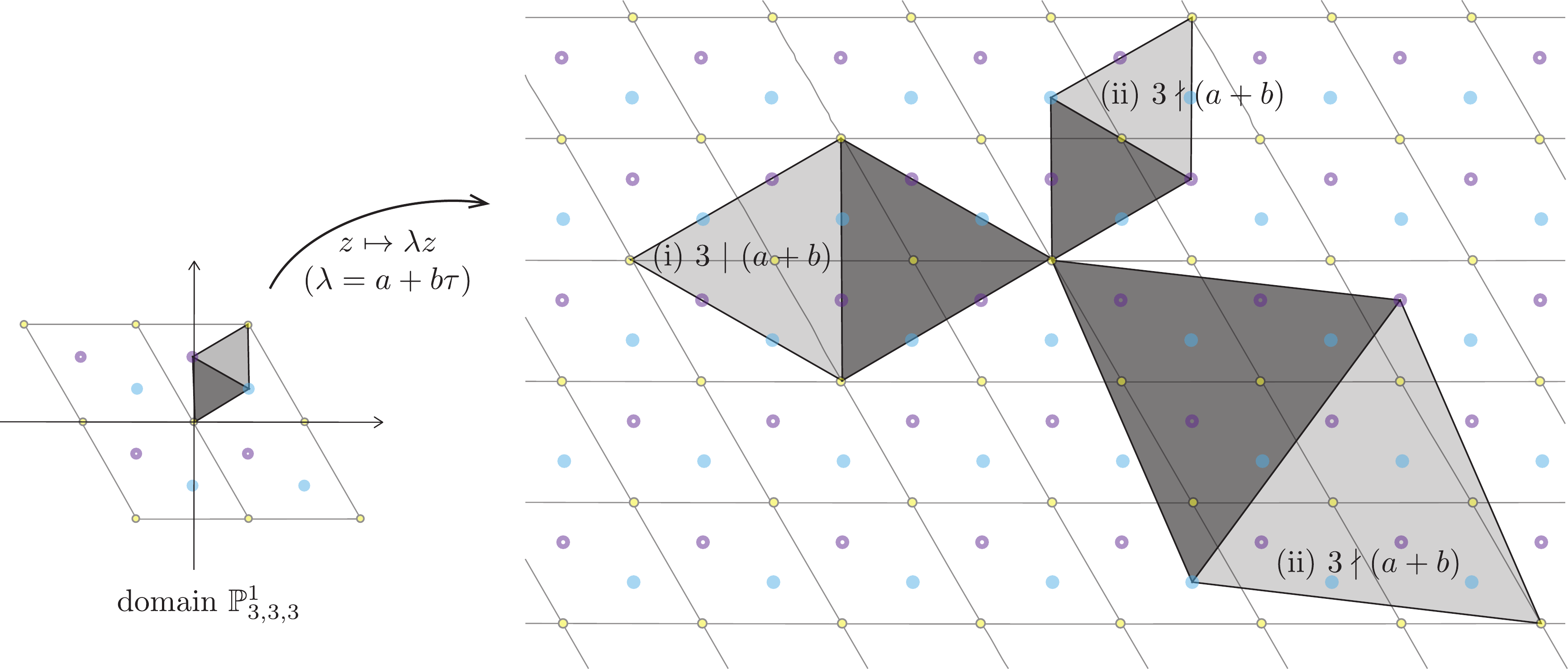}
\caption{Images of holomorphic orbi-spheres in $\PT$ visualized in its universal cover}\label{fig:input_typeP1333}
\end{center}
\end{figure}

Observe that
\begin{eqnarray*}
\lambda \cdot \left( \frac{1+2\tau}{3} \right) &=& (a+ b\tau) \left( \frac{1+2\tau}{3} \right) \\
&=&  \frac{(a-2b) + (2a -b) \tau}{3}
\end{eqnarray*}
and
$$ \lambda \cdot \left( \frac{2+\tau}{3} \right) = \frac{(2a-b) + (a +b)  \tau}{3}.$$
(Here, we used the relation $\tau^2 = -\tau -1$.) 
Using
$$a-2b \equiv a+b \mod 3,$$
we see that there are only two possibilities:
\begin{enumerate}
\item[(i)] $3\,\,|\,\,(a+b)$, for which both $ \lambda \cdot \left( \frac{1+2\tau}{3} \right)$ and $\lambda \cdot \left( \frac{2+\tau}{3} \right)$ correspond to the insertion $\Delta^{1/3}_1$;
\item[(ii)] $3 \nmid (a+b)$, for which both $ \lambda \cdot \left( \frac{1+2\tau}{3} \right)$ and $\lambda \cdot \left( \frac{2+\tau}{3} \right)$ correspond to two different insertions $\Delta^{1/3}_2$ and $\Delta^{1/3}_3$;
\end{enumerate}
We remark that the in case (ii), insertions $(\Delta^{1/3}_2, \Delta^{1/3}_3)$ can be located at either $(z_2,z_3)$ or $(z_3,z_2)$. See the discussion at the end of Section \ref{subsec:symm333}.

Note that 
\begin{equation}\label{eq:dmod3}
d=|\lambda|^2 = a^2 -ab + b^2 = (a+b)^2 -3ab \equiv (a+b)^2 \mod 3.
\end{equation}
Therefore, if $d\equiv 0 \mod 3$, then the corresponding holomorphic spheres contribute to
$$\langle \da{1}{3}, \da{1}{3}, \da{1}{3} \rangle_{0,3},$$
and if $d \equiv 1 \mod 3$, they contribute to
$$\langle \da{1}{3}, \db{1}{3}, \dc{1}{3} \rangle_{0,3}.$$

Let $F(q)$ denote the power series
\begin{equation}\label{eq:F}
F(q):= \displaystyle\sum_{a,b \in \Z} q^{a^2 - ab + b^2}
\end{equation}
See \eqref{eq:FandG} for first few terms of $F$.

By \eqref{eq:dmod3}, the power of any nontrivial term in $F(q)$ should be either $0 \, ({\rm mod} \,\,3)$ or $1\, ({\rm mod} \,\,3)$. Thus, we can decompose $F$ as $F= F_{0,3} + F_{1,3}$ according to the remainder of the power of $q$ by $3$. Then the above discussion directly implies that
$$f_0(q) = \sum_{d \in H_2 ({X}, \Z)} \langle \da{1}{3}, \db{1}{3}, \dc{1}{3} \rangle^{{X}}_{0,3,d} q^d= \frac{1}{6} F_{1,3} (q)$$ 
since there can not be contributions from constant maps.
Here, $\frac{1}{6}$ is responsible for the group $\Z_6 \cong \langle 1+\tau \rangle$ which is discussed at the end of Subsection \ref{subsec:symm333}.

For $f_1(q) =\sum_{d \in H_2 ({X}, \Z)} \langle \da{1}{3}, \da{1}{3}, \da{1}{3} \rangle^{{X}}_{0,3,d} q^d$, there is an additional contribution from the constant map (see Subsection \ref{subsec:contriconst}, \eqref{eq:i7_pt2}) so that $f_1 (q) = \frac{1}{3} F_{0,3} (q)$, where $\frac{1}{3}$ again comes from $\Z_3$, the isotropy group at $w_1$ (or the origin in $\C$).

\begin{remark}
Number theoretic aspects of $F$ such as an explicit description of its Fourier coefficients will be given in Appendix. In particular, we will describe the Fourier coefficients of $F$ in terms of the prime factorization of the exponent of $q$.
\end{remark}

%

\subsection{Contribution from constant maps}\label{subsec:contriconst}
The constant map whose image lies in a single singular point also contributes to the quantum product. Indeed, these constant maps induce the product structure ``$\,\cdot\,$" of the Chen-Ruan cohomology ring \cite{CR1} of $\PT$, and the quantum product deforms this structure analogously to the relation between cup products and quantum products for smooth symplectic manifolds. 

Let us consider one of singular points $w_i$ and constant maps from an orbi-sphere with three markings onto this point. The computation is essentially the same for all $i=1,2,3$ because of the  symmetry. Obviously, there are two constant maps with image $w_i$ whose domain orbi-spheres are $\mathbb{P}^1_{3,3,3}$ and $\mathbb{P}^1_{3,3}$. We denote these maps by $c_1$ and $c_2$. Here, the markings for $c_2$ are located at two singular points and a chosen smooth point. We remark that the second map does not violate Lemma \ref{lem:nodal} since it only holds for non-constant holomorphic orbi-spheres.

$c_1$ and $c_2$ give rise to classical parts 
$$\langle \Delta^{1/3}_i,\Delta^{1/3}_i,\Delta^{1/3}_i \rangle^{{X}}_{0,3,d=0} \quad\mbox{and} \quad \langle \Delta^{1/3}_i, \Delta^{2/3}_i, 1 \rangle^{{X}}_{0,3,d=0}$$ 
of the $3$-fold Gromov-Witten invariant on ${X}=\PT$. Both of these numbers are $\frac{1}{3}$, where the fraction comes from the definition of the orbifold integration \cite{ALR} (see Section \ref{subsec:GWforP}). Therefore, the Chen-Ruan cup product for $\PT$ is given as follows.
\begin{equation}\label{eq:i7_pt}
\Delta^{1/3}_i \cdot \Delta^{2/3}_i = \frac{1}{3} PD(1) = \frac{1}{3} [pt]
\end{equation}
\begin{equation}\label{eq:i7_pt2}
\Delta^{1/3}_i \cdot \Delta^{1/3}_i =  \frac{1}{3} PD(\Delta^{1/3}_i) = \Delta^{2/3}_i
\end{equation}

Here, we used $PD(\Delta^{1/3}_i) = 3 \times \Delta^{2/3}_i$ (Remark \ref{rmk:warnPD}).
\eqref{eq:i7_pt2} completes the computation of $f_1$.

\begin{remark}
In fact, to verify \eqref{eq:i7_pt}, it remains to show that there are no other contributions than constants. However, we have shown in Lemma \ref{lem:nodal} that there are no holomorphic orbi-spheres in $\PT$ which has only two orbifold markings.
\end{remark}

\section{Further applications : (2,3,6), (2,4,4)}\label{sec:236244}

In this section, we proves Theorem \ref{thm:main2} and Proposition \ref{prop:main}.
We slightly modify the classification of holomorphic orbi-spheres in $\PT$ in order to compute the quantum cohomology rings of two other orbifold projective lines with three singular point: $\PH$ and $\PQ$. For a certain product in $QH^\ast_{orb} (\PH)$, we use a heuristic argument, so the proof is incomplete (see Conjecture \ref{conj:P1366}). We hope to fill in the missing part by classifing holomorphic orbi-spheres whose domain admits a hyperbolic structure, and leave it to future investigation. 

We remark that the explicit formulae of quantum products of $\PH$ and $\PQ$ have not appeared in any literature.

\subsection{The product on $QH^\ast_{orb} (\PH)$}
We set the notation for generators of $H^*_{orb}(\PH)$ as follows.
Recall that $E$ is the elliptic curve associated with the lattice $ \Z  \langle 1, \tau \rangle$ in $\C$ where $\tau =\exp \left(\frac{2 \pi \sqrt{-1}}{3} \right)$. Then $\PH$ is obtained as the global quotient $[ E / \Z_6 ]$, where $\Z_6 \cong\langle 1+\tau \rangle$ acts on $E$ by the complex multiplication. There are three cone points on $\PH$ and we use the same notation $w_1, w_2$, and $w_3$ for these singular point as we did for $\PT$, where $w_1$, $w_2$ and $w_3$ have isotropy groups $\Z_2, \Z_3$, and $\Z_6$, respectively. 
The inertia orbifold $\mathcal{I}\PH$ consists of the smooth sector, $B\Z_2$, $B\Z_3$, and $B\Z_6$.
The $\Q$-basis of $H^*_{orb}(\PH, \Q)$ is given as $1, \da{1}{2}, \db{1}{3}, \db{2}{3}, \dc{1}{6}, \cdots, \dc{5}{6}, \pt$ as follows.

The basis of smooth sector are
\begin{align*}
&H^0_{orb}(\PH, \Q) = \Q \cdot 1, \quad \quad \quad H^2_{orb}(\PH, \Q) = \Q \cdot \pt.
\end{align*}
For twist sectors, let $\da{1}{2} \in H^1_{orb}(\PH, \Q)$, $\db{j}{3} \in H^{\frac{2j}{3}}_{orb}(\PH, \Q)$($j=1,2$), and $\dc{k}{6} \in H^{\frac{2k}{6}}_{orb}(\PH, \Q)$($k=1,\cdots,5$) which are supported at singular points $w_1, w_2$, and $w_3$, respectively.
From the virtual dimension formula of $\OL{\CM}_{0,3,d}(\PH)$, we can classify all possible orbi-insertions with expected dimension $0$ and the corresponding domain orbi-sphere as in the following list.
\begin{enumerate}[(a)]
	\item $\PH$ : $\langle \da{1}{2}, \db{1}{3}, \dc{1}{6} \rangle$, $\langle \dc{3}{6}, \db{1}{3}, \dc{1}{6} \rangle$ $\langle \da{1}{2}, \dc{2}{6}, \dc{1}{6} \rangle$, $\langle \dc{3}{6}, \dc{2}{6}, \dc{1}{6} \rangle$,
	\item $\PT$ : $\langle \dc{2}{6}, \dc{2}{6}, \dc{2}{6} \rangle$, $\langle \db{1}{3}, \dc{2}{6}, \dc{2}{6} \rangle$, 	 $\langle \db{1}{3}, \db{1}{3}, \dc{2}{6} \rangle$, $\langle \db{1}{3}, \db{1}{3}, \db{1}{3} \rangle$ 
	\item $\PP^1_{3,6,6}$(hyperbolic) : $\langle \dc{1}{6}, \dc{1}{6}, \dc{4}{6} \rangle$, $\langle \db{2}{3}, \dc{1}{6}, \dc{1}{6} \rangle$	
	\item $\PP^1_{2,2}$ : $\langle 1, \dc{3}{6}, \dc{3}{6} \rangle$, $\langle 1, \da{1}{2}, \dc{3}{6} \rangle$, $\langle 1, \da{1}{2}, \da{1}{2} \rangle$
	\item $\PP^1_{3,3}$ : $\langle 1, \db{1}{3}, \dc{4}{6} \rangle$, $\langle 1, \db{2}{3}, \dc{2}{6} \rangle$
\end{enumerate}
From Lemma \ref{lem:nodal}, there are no nontrivial maps which contribute to the type of (4) and (5). Thus, if we denote $\bt := \sum t_{j,i} \Delta^i_j$, the genus 0-Gromov-Witten potential of $\PH$ can be written up to order of $t^3$ as follows:
\begin{equation}
\begin{array}{rl}
F^{\PH}_{0} (\bt) =& \frac{1}{2} t_0^2 \log q + t_{0} (\frac{1}{2} \ta{1}{2} \ta{1}{2} + \frac{1}{6} \tc{3}{6} \tc{3}{6}) + 
+ (\ta{1}{2} \tb{1}{3} \tc{1}{6}) \cdot h_0 (q) \\
&+ (\tc{3}{6} \tb{1}{3} \tc{1}{6}) \cdot h_1 (q) + (\ta{1}{2} \tc{2}{6} \tc{1}{6}) \cdot h_2 (q) + (\tc{3}{6} \tc{2}{6} \tc{1}{6}) \cdot h_3 (q)\\
&+ \frac{1}{6} \tc{2}{6}^3 \cdot h_4 (q) + \frac{1}{2} \tc{2}{6}^2 \tb{1}{3} \cdot h_5 (q) + \frac{1}{2} \tc{2}{6} \tb{1}{3}^2 \cdot h_6 (q) + \frac{1}{6} \tb{1}{3}^3 \cdot h_7 (q)\\
&+ \frac{1}{2} \tc{1}{6}^2 \tc{4}{6} \cdot h_8 (q) + \frac{1}{2} \tc{1}{6}^2 \tb{2}{3} \cdot h_9 (q) + \frac{1}{2} \tb{2}{3} \tc{1}{6}^2 \cdot h_{10} (q) + O(t^4),
\end{array}
\end{equation}
where the precise expressions of $h_i(q)$ for $0 \leq i \leq 10$ will be given, later.

For holomorphic orbi-spheres of type (a) and (b), we choose the presentations of domain orbi-spheres as $[E / \Z_6]$ and $[E / \Z_3]$, respectively. Here, $E$ is the elliptic curve corresponding to the $\Z$-lattice $\langle 1, \tau \rangle$ in $\C$, where $\tau = \exp{\frac{2\pi\sqrt{-1}}{3}}$.

Observe that any holomorphic orbi-sphere with the orbi-insertion condition in (a) or (b) satisfies the condition in the lemma \ref{lem:u_orb_cover}. So, we can lift such maps $u :  \PH \to \PH$ and $u : \PT \to \PH$ to a linear map between universal coverings $\WT{u} : \C \to \C$.
Below, we will count these holomorphic orbi-spheres with help of the lattice structures of inverse image of orbi-singular points in $\C$.
As in the case of $\PT$, it will turn out that the counting matches  the number of solutions of a certain Diophantine equations.
The regularity of these holomorphic orbi-spheres are guaranteed by Lemma \ref{lem:reg}.

To clarify the orbi-insertions by looking at the lifted linear map $\tilde{u} : \C \to \C$, we explicitly identify the lattice structure on $\C$ coming from the universal orbifold covering map $p: \C \to \PH$ as follows:
\begin{equation}\label{eq:lattice_PH}
\begin{array}{rl}
p^{-1} (w_1) &= \Z \left\langle \frac{1}{2}, \frac{\tau}{2} \right\rangle + p^{-1}(w_3) \\
& = \left\{ \frac{a}{2} + \frac{b}{2} \tau \mid a, b \in \Z \text{ and $a$ or $b$ is an odd integer} \right\},	\\
p^{-1} (w_2) &= \Z \left\langle \frac{2+\tau}{3}, \frac{1+ 2\tau}{3} \right\rangle + p^{-1}(w_1),\\
p^{-1} (w_3) &= \Z \langle 1, \tau \rangle \,\,(\ni 0).

\end{array}
\end{equation}
In particular, $w_3$ is set to be a base point associated with the universal covering $(\C,0)$. (See Figure \ref{fig:236lattice})
\begin{figure}
\begin{center}
\includegraphics[height=3.8in]{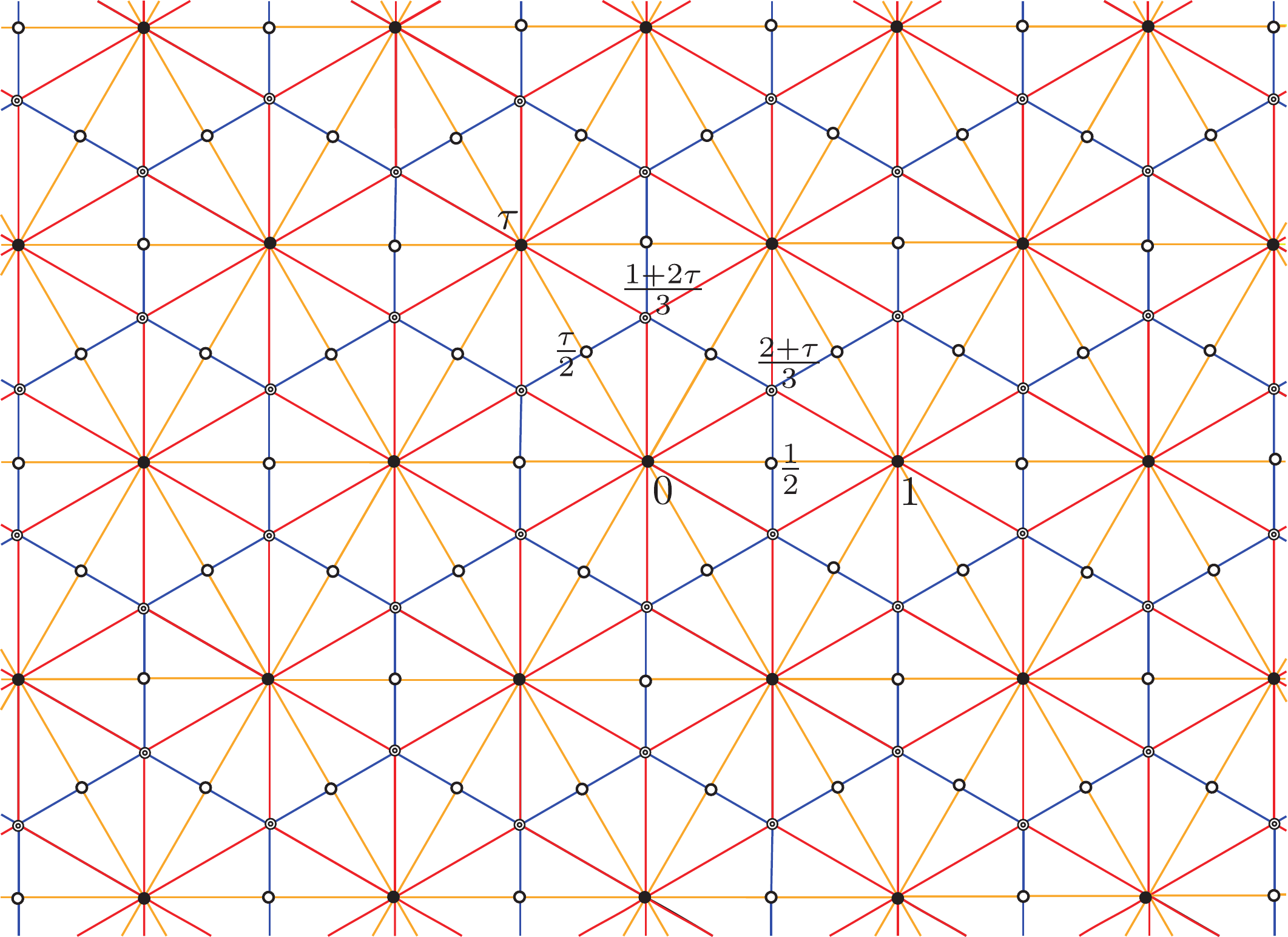}
\caption{Lattices on the universal cover of $\PH$ : $p^{-1} (w_1)=\{ \circ \}$, $p^{-1}(w_2)=$ $\{${\tiny $\circledcirc$}$\}$ and $p^{-1} (w_3)= \{\bullet\}$}\label{fig:236lattice}
\end{center}
\end{figure}

The universal cover $\C$ of the domain $\PH$ also has the same lattice structure, and the lattices on $\C$ from the domain $\PT$ are given as in Section \ref{subsec:compP1333}.

\subsubsection*{Case {\rm (a)} with the domain orbi-sphere $\PH$ : $h_i$ for $0 \leq i \leq 3$}
Let $z_1$, $z_2$, and $z_3$ be the three orbi-points in the domain $\PH$, whose orders of singularities are 2, 3, and 6, respectively.
If $u$ is a holomorphic map from $\PH$ to itself with the orbi-insertion condition as in (a), then $u$ is a orbifold covering map by Lemma \ref{lem:u_orb_cover}, so one can find the lifting $\WT{u} : \C \to \C$ with $\WT{u}(0) = 0$:
\begin{equation}
	\xymatrix{
		\C \ar@{.>}[r]^{\WT{u}} \ar[d]^{p} & \C \ar[d]^{p} \\
		\PH \ar[r]^{u} & \PH.
		}
\end{equation}
Since any holomorphic orbi-sphere $u$ contributing to (a) maps $z_3$ to $w_3$ (by the arrangement of insertions in (a)), $\WT{u}(z) = \lambda z$ for some $\lambda \in \Z[\tau]$. Conversely, it is clear from Figure \ref{fig:236lattice} that any such linear map $\WT{u}$ descends to a holomorphic orbi-sphere with insertions as in (a).
Since for $\lambda = a + b \tau$ ($a, b \in \Z$), the degree of the underlying map of $\WT{u}(z) = \lambda z$ is $N := |\lambda|^2= \lambda \OL{\lambda} = a^2 -ab + b^2$, the above discussion shows that 
$$h_0 (q) + h_1 (q) + h_2 (q) + h_3 (q) = \frac{1}{6} F(q),$$
where $F(q)$ is defined by the equation \eqref{eq:F}.
Here, $\frac{1}{6}$ in the right hand side comes from the symmetry between linear maps which induce the same holomorphic orbi-sphere.
By the same argument as in Section \ref{subsec:symm333},  we see that the symmetry among these linear maps is generated by the $(1+\tau)$-multiplication, which is nothing but the action of the isotropy group of $w_3$ (isomorphic to $\Z_6$). 

\begin{figure}
\begin{center}
\includegraphics[height=1.7in]{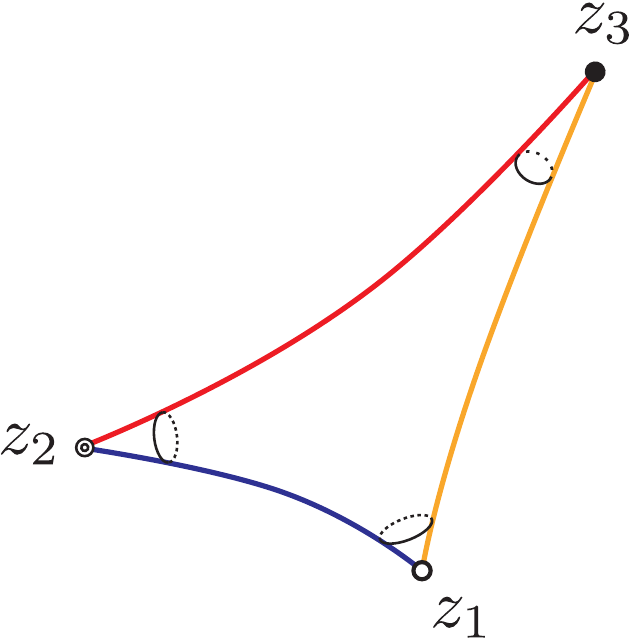}
\caption{$\PH$}\label{fig:236threed}
\end{center}
\end{figure}


Note that the triangle whose vertices are $0\in p^{-1}(z_3)$, $\frac{2 + \tau}{3} \in p^{-1}(z_2)$, and $\frac{1 + \tau}{2} \in p^{-1}(z_1)$ gives the fundamental domain of the upper-hemisphere of (the domain) $\PH$ (see the shaded region in Figure \ref{fig:236lattice} and compare it with Figure \ref{fig:236threed}).
As in the case of $\PT$, we classify the orbi-insertion condition by chasing the images of $\frac{2 + \tau}{3}$ and $\frac{1 + \tau}{2}$ in the domain.
For $\lambda = a + b\tau$,
$$\lambda \cdot \frac{2 + \tau}{3} = \frac{2a-b}{3} + \frac{a + b}{3} \tau$$
and
$$\lambda \cdot \frac{1 + \tau}{2} = \frac{a-b}{2} + \frac{a}{2} \tau.$$
First, note that
$$\lambda \cdot \frac{1 + \tau}{2} \in p^{-1} (w_1) \quad \iff\quad \text{$a$ or $b$ is odd},$$
$$\lambda \cdot \frac{1 + \tau}{2} \in p^{-1} (w_3) \quad \iff\quad \text{$a$ and $b$ is even}.$$
From $( 3 \mid 2a - b \iff 3 \mid a + b)$, it can be easily checked that 
$$\lambda \cdot \frac{2 + \tau}{3} \in p^{-1}(w_3) \quad \iff\quad 3 \mid (a + b),$$
$$\lambda \cdot \frac{2 + \tau}{3} \in p^{-1}(w_2) \quad \iff\quad 3 \nmid (a + b).$$
Hence using the equation \eqref{eq:dmod3}, there are two possible orbi-insertions at the marked point corresponding to $\lambda \cdot \frac{2 + \tau}{3}$ : 
\begin{description} 
\item[$\dc{2}{6}$] $3 \mid (a + b) \iff N \equiv 0 \mod 3$,
\item[$\db{1}{3}$] $3 \nmid (a + b) \iff N \equiv 1 \mod 3$.
\end{description}
Similarly, two possible orbi-insertions at the marked point corresponding to $\lambda \cdot \frac{1 + \tau}{2}$ are
\begin{description}
\item[$\dc{3}{6}$] $a$ and $b$ is even $\iff N \equiv 0 \mod 2$,
\item[$\da{1}{2}$] $a$ or $b$ is odd \quad $\iff N \equiv 1 \mod 2$.
\end{description}
Summarizing the above discussion, we conclude from the Chinese remainder theorem that $u$ contributes to

\begin{itemize}
\item[]  $\langle \da{1}{2}, \db{1}{3}, \dc{1}{6} \rangle$ $\iff {\rm deg}\, u \equiv 1 \mod 6$,
\item[] $\langle \dc{3}{6}, \db{1}{3}, \dc{1}{6} \rangle$ $\iff {\rm deg}\, u \equiv 4 \mod 6$.
\item[]  $\langle \da{1}{2}, \dc{2}{6}, \dc{1}{6} \rangle$ $\iff {\rm deg}\, u \equiv 3 \mod 6$,
\item[]  $\langle \dc{3}{6}, \dc{2}{6}, \dc{1}{6} \rangle$ $\iff {\rm deg}\, u \equiv 0 \mod 6$.
\end{itemize}

Recall $N=|\lambda|^2 = {\rm deg}\, u$, which equals the exponent of $q$ for the term in the Gromov-Witten potential that $u$ contributes to. Therefore, we obtain

\begin{align*}
h_0 (q) &=  \frac{1}{6} \sum_{\substack{N =1 \\ N \equiv 1 \text{ mod } 6}}^{\infty} \sum_{\substack{m^2 - mn + n^2 = N \\ m,n\in \Z}} q^N = \frac{1}{6} F_{1,6}(q),\\
h_1 (q) &=  \frac{1}{6} \sum_{\substack{N = 1 \\ N \equiv 4 \text{ mod } 6}}^{\infty} \sum_{\substack{m^2 - mn + n^2 = N \\ m,n\in \Z}} q^N = \frac{1}{6} F_{4,6}(q),\\
h_2 (q) &=  \frac{1}{6} \sum_{\substack{N = 1 \\ N \equiv 3 \text{ mod } 6}}^{\infty} \sum_{\substack{m^2 - mn + n^2 = N \\ m,n\in \Z}} q^N = \frac{1}{6} F_{3,6}(q),\\
h_3 (q) &= \frac{1}{6} \left( 1 + \sum_{\substack{N = 1 \\ N \equiv 0 \text{ mod } 6}}^{\infty} \sum_{\substack{m^2 - mn + n^2 = N \\ m,n\in \Z}} q^N \right) = \frac{1}{6} + \frac{1}{6} F_{0,6}(q).
\end{align*}
where $F_{i,6}$ is the sum of terms in $F$ whose exponents of $q$ is $i$ modulo $6$.
Here, the constant term of $h_3$ can be obtained from a similar argument in the subsection \ref{subsec:contriconst}.

\subsubsection*{Case {\rm (b)} with the domain orbi-sphere $\PT$}
We first show that holomorphic orbi-spheres with orbi-insertions as in case (b) can be lifted to the one on $\PT$.
Let $\pi : \PT \to \PH$ be the 2-fold orbifold covering map which comes from the action on $[E/ \langle \tau \rangle]$ generated by the $(1+\tau)$-multiplication, as drawn in Figure \ref{fig:2fold333}. Write $w_i'$ and $w_i$ ($i=1,2,3$) for orbi-points in $\PT$ and $\PH$, respectively and let $\pi$ send both $w_1'$ and $w_2'$ to $w_2$, and $w_3'$ to $w_3$.

\begin{figure}
\begin{center}
\includegraphics[height=1.5in]{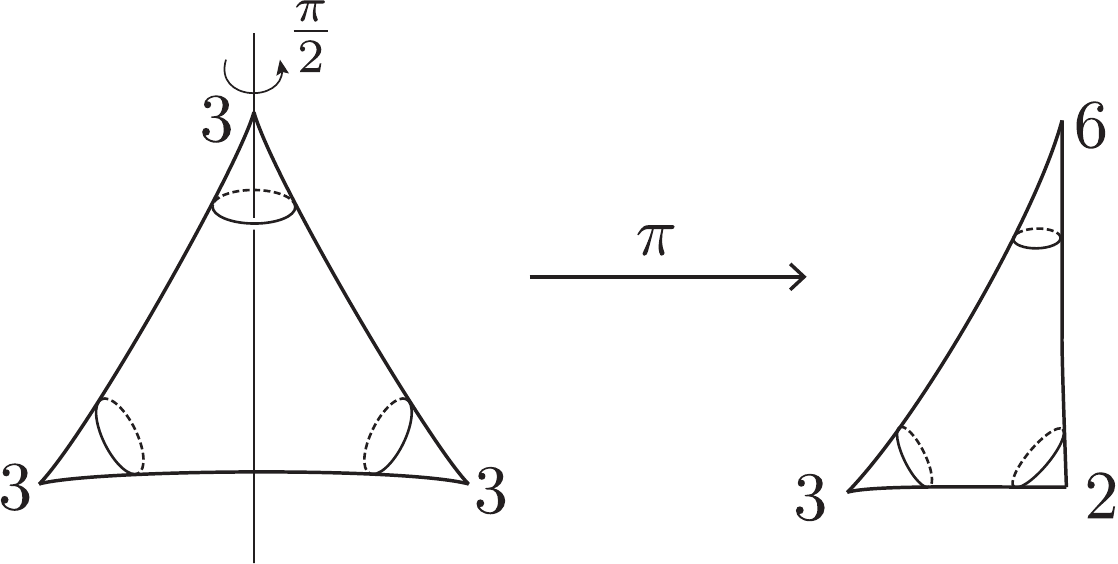}
\caption{The $2$-fold covering $\pi : \PT \to \PH$}\label{fig:2fold333}
\end{center}
\end{figure}

After fixing base points of $\PT$ and $\PH$,
\begin{align*}
\pi_1^{orb} (\PT) &= \left\langle \rho_1, \rho_2, \rho_3 \mid (\rho_1)^3 = (\rho_2)^3 = (\rho_3)^3 = \rho_1 \rho_2 \rho_3 = 1 \right\rangle,\\
\pi_1^{orb} (\PH) &= \left\langle \lambda_1, \lambda_2, \lambda_3 \mid (\lambda_1)^2 = (\lambda_2)^3 = (\lambda_3)^6 = \lambda_1 \lambda_2 \lambda_3 = 1 \right\rangle.
\end{align*}
(see Section \ref{subsec:orbfundgp}) and $\pi$ induces a group homomorphism $\pi_\ast : \pi_1^{orb} (\PT) \to \pi_1^{orb} (\PH)$.
We see from Figure \ref{fig:2fold333} that the images of $\rho_1$ and $\rho_2$ under $\pi_\ast$ lie in the conjugacy class of $\lambda_2$, and the image of the other generator $\rho_3$ lies in that of $(\lambda_3)^2$. (Here, conjugacy classes depend on the choice of base points.)
It follows that $\pi_* \left(\pi_1^{orb} (\PT)\right)$ contains $\lambda_2$ and $(\lambda_3)^2$, as it is a normal subgroup of $\pi_1^{orb} (\PH)$.

\begin{lemma}\label{lem:PT_lifting}
For a given (b)-type holomorphic orbi-sphere $u : \PT \to \PH$, there exists a holomorphic orbi-sphere $\WT{u}$ which make the following diagram commute:
\begin{equation*}
\xymatrix{
{} & \PT \ar[d]^{\pi} \\
\PT \ar@{.>}[ur]^{\exists \WT{u}} \ar[r]^{u} & \PH.
}
\end{equation*}
\end{lemma}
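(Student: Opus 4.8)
The plan is to produce $\WT{u}$ as a direct application of the lifting criterion for orbifold coverings, Proposition \ref{prop:cov_lifting}, to the $2$-fold covering $\pi : \PT \to \PH$ and the orbi-map $u$. Since $u$ is a non-constant holomorphic map between two-dimensional orbifolds, Lemma \ref{lem:orbimap} guarantees that it is an orbi-map in the sense of Definition \ref{def:orbimaps}, so Proposition \ref{prop:cov_lifting} applies: after fixing compatible base points (the domain and target $\PT$ based at $w_3'$, and $\PH$ based at $w_3$, so that $\pi(w_3')=w_3$), a lift $\WT{u} : \PT \to \PT$ exists as an orbi-map if and only if $u_* \pi_1^{orb}(\PT) \subseteq \pi_* \pi_1^{orb}(\PT)$. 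Thus the entire statement reduces to verifying this single containment of subgroups of $\pi_1^{orb}(\PH)$.

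I would check the containment on the generators $\rho_1,\rho_2,\rho_3$ of $\pi_1^{orb}(\PT)$ encircling the domain orbi-points $z_1,z_2,z_3$. Each $u_*(\rho_i)$ is, up to conjugacy, the local monodromy of $u$ at $z_i$, i.e. the image of a generator of the local $\Z_3$ under the injection of isotropy groups forced by Definition \ref{def:crobstab}. For a (b)-type orbi-sphere every orbi-insertion is either $\db{1}{3}$ or $\dc{2}{6}$; hence each $u_*(\rho_i)$ lies in the conjugacy class of $\lambda_2$ (when $u(z_i)=w_2$) or of $\lambda_3^2$ (when $u(z_i)=w_3$), possibly after passing to an inverse, but $\lambda_2^{-1}$ and $\lambda_3^{-2}$ belong to the same subgroup. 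As recalled just before the statement, $\pi_* \pi_1^{orb}(\PT)$ is a \emph{normal} subgroup of $\pi_1^{orb}(\PH)$ containing both $\lambda_2$ and $\lambda_3^2$, so by normality it contains all their conjugates and inverses. Therefore $u_*(\rho_i) \in \pi_* \pi_1^{orb}(\PT)$ for each $i$, and since the $\rho_i$ generate, $u_* \pi_1^{orb}(\PT) \subseteq \pi_* \pi_1^{orb}(\PT)$, yielding the lift $\WT{u}$.

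It then remains to promote the resulting orbi-map $\WT{u}$ to a holomorphic orbi-sphere, which is a local matter: away from the orbi-points $\pi$ is a local biholomorphism, so $\WT{u}$ agrees locally with $s \circ u$ for a holomorphic local section $s$ of $\pi$ and is holomorphic there; across each orbi-point holomorphicity extends by the removable singularity theorem, exactly as in the desingularization argument of Section \ref{subsec:orbsphorbcover}. Non-constancy of $\WT{u}$ is automatic from $\pi \circ \WT{u} = u$ and the non-constancy of $u$. The step I expect to require the most care is the identification of $u_*(\rho_i)$ with the prescribed local monodromy: one must argue that the twisted-sector label of the orbi-insertion at $z_i$ genuinely pins the conjugacy class of $u_*(\rho_i)$ down to a power of $\lambda_2$ or of $\lambda_3^2$, the essential point being that no (b)-type insertion can produce the class of $\lambda_1$ or of the odd powers $\lambda_3,\lambda_3^3,\lambda_3^5$ — precisely the elements \emph{not} contained in $\pi_* \pi_1^{orb}(\PT)$. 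Once this monodromy bookkeeping is settled, the subgroup containment, and hence the existence of the holomorphic lift $\WT{u}$, follow at once.
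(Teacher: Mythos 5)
Your proposal is correct and follows essentially the same route as the paper: reduce the lifting to the subgroup containment $u_*\pi_1^{orb}(\PT) \subseteq \pi_*\pi_1^{orb}(\PT)$ via Proposition \ref{prop:cov_lifting}, and verify it on the generators $\rho_i$ by observing that the (b)-type insertions $\db{1}{3}$ and $\dc{2}{6}$ force $u_*(\rho_i)$ into the conjugacy classes of $\lambda_2$ and $\lambda_3^2$, which lie in the normal subgroup $\pi_*\pi_1^{orb}(\PT)$. Your additional remarks on inverses and on promoting the lift to a holomorphic map are consistent with, and slightly more explicit than, the paper's argument.
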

\begin{proof}
Observe that only two kinds of orbi-insertions $\db{1}{3}$ and $\dc{2}{6}$ appear in (b). 
Hence $u_*$ maps a generator of $\pi_1^{orb} (\PT)$ to an element in the conjugacy class of $\lambda_2$ or $\lambda_3^2$. (Indeed, if we choose base points and the generators $\rho_1$ and $\lambda_2$ as in (b) of Figure \ref{fig:orbfundgenpath}, then $u_*$ sends $\rho_1$ exactly to $\lambda_2$, and similar happens for $\rho_2$ and $\rho_3$.)
Thus, $u_* \left(\pi_1^{orb} (\PT)\right)$ is contained in $\pi_* \left(\pi_1^{orb} (\PT)\right)$.
From Proposition \ref{prop:cov_lifting}, there exists an orbi-map $\WT{u} : \PT \to \PT$ which lifts $u$.
\end{proof}


Let $z$ be the one of three orbi-points of the domain $\PT$ for a holomorphic orbi-sphere $u$ of type (b).
If $u$ sends $z$ to $w_2 \in \PH$ with the orbi-insertion $\db{1}{3}$, then the corresponding orbi-insertion of the lifting $\WT{u} : \PT \to \PT$ is $\da{1}{3}$ or $\db{1}{3}$.
Similarly, if $u(z) = w_3$ with the insertion $\dc{2}{6}$, then the corresponding orbi-insertion of a lifting $\WT{u}$ is $\dc{1}{3}$.
Here, we abused the notation for orbi-insertions of $\PT$ and $\PH$.

For each holomorphic orbi-sphere $u : \PT \to \PH$ with orbi-insertion of type (b), there are two liftings $\tilde{u} : \PT \to \PT$.
Two liftings of $u$ are related by the $\Z_2$-action (i.e. the action of the deck transformation group) which switches $w_1'$ and $w_2'$. Therefore, if one lifting has orbi-insertion $\da{1}{3}$, then the other lifting has orbi-insertion $\db{1}{3}$. 

In summary, Lemma \ref{lem:PT_lifting} gives rise to the following one-to-two correspondences:
\begin{equation*}
\begin{array}{rcl}
\langle \dc{2}{6}, \dc{2}{6}, \dc{2}{6} \rangle^{\PH}&\stackrel{1:2}{\longleftrightarrow}&\langle \dc{1}{3}, \dc{1}{3}, \dc{1}{3} \rangle^{\PT} \\
\langle \db{1}{3}, \db{1}{3}, \dc{2}{6} \rangle^{\PH} &\stackrel{1:2}{\longleftrightarrow}& \langle \da{1}{3}, \db{1}{3}, \dc{1}{3} \rangle^{\PT} + \langle \db{1}{3}, \da{1}{3}, \dc{1}{3} \rangle^{\PT} \\
&& = 2\langle \da{1}{3}, \db{1}{3}, \dc{1}{3} \rangle^{\PT}  \\
\langle \db{1}{3}, \db{1}{3}, \db{1}{3} \rangle^{\PH} &\stackrel{1:2}{\longleftrightarrow}& \langle \da{1}{3}, \da{1}{3}, \da{1}{3} \rangle^{\PT} + \langle \db{1}{3}, \db{1}{3}, \db{1}{3} \rangle^{\PT} \\
&& = 2 \langle \da{1}{3}, \da{1}{3}, \da{1}{3} \rangle^{\PT}.
\end{array}
\end{equation*}
($\langle \db{1}{3}, \dc{2}{6}, \dc{2}{6} \rangle^{\PH}$ vanishes since there are no corresponding liftings.)

Therefore, $h_i$ for $4 \leq i \leq 7$ is given as follows:

\begin{prop}
Let $f_0^{\PT}(q)$ and $f_1^{\PT}(q)$ be the coefficient of the $t_1 t_2 t_3$ and $t_i^3$ of $F_0^{\PT}$, respectively. Then
	\begin{align*}
		h_4 (q) &= \frac{1}{2} f_1^{\PT} (q^2)
				=  \frac{1}{6} + q^6 + q^{18} + q^{24} + 2 q^{42} + O(q^{48}),	\\
		h_5 (q) &= 0,	\\
		h_6 (q) &= f_0^{\PT} (q^2) = q^2 + q^8 + 2 q^{14} + 2 q^{26} + q^{32} + 2 q^{38} + O(q^{48}),\\
		h_7 (q) &= f_1^{\PT} (q^2)
				= \frac{1}{3} + 2 q^6 + 2 q^{18} + 2 q^{24} + 4 q^{42} + O(q^{48}).
	\end{align*}
\end{prop}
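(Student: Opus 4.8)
The plan is to obtain $h_4,\dots,h_7$ from the one-to-two lifting correspondence set up around Lemma~\ref{lem:PT_lifting}, feeding in the already computed $\PT$ series $f_0^{\PT}$ and $f_1^{\PT}$. Expanding the cubic part of $F_0^{\PH}$ identifies $h_7$ and $h_4$ with the generating series $\sum_\beta\langle\db{1}{3},\db{1}{3},\db{1}{3}\rangle^{\PH}_{0,3,\beta}q^{\omega(\beta)}$ and $\sum_\beta\langle\dc{2}{6},\dc{2}{6},\dc{2}{6}\rangle^{\PH}_{0,3,\beta}q^{\omega(\beta)}$, and identifies $h_5,h_6$ with the series of $\langle\db{1}{3},\dc{2}{6},\dc{2}{6}\rangle^{\PH}$ and $\langle\db{1}{3},\db{1}{3},\dc{2}{6}\rangle^{\PH}$. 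So it suffices to count, in each degree, the type-(b) orbi-spheres $u\colon\PT\to\PH$ and to translate this into a $\PT$ count through the lifts $\WT u\colon\PT\to\PT$ furnished by Lemma~\ref{lem:PT_lifting}.

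First I would record how the degree changes under the lift, since this is the source of the substitution $q\mapsto q^2$. As $\pi\colon\PT\to\PH$ is the $2$-fold cover $[E/\Z_3]\to[E/\Z_6]$ we have $\pi^*\omega_{\PH}=\omega_{\PT}$, so for $u=\pi\circ\WT u$ the symplectic areas agree, $\int u^*\omega_{\PH}=\int\WT u^*\omega_{\PT}$. Because $\mathrm{area}(\PT)=2\,\mathrm{area}(\PH)$, a lift $\WT u$ of $\PT$-degree $d$ produces a map $u$ whose $\PH$-degree is $2d$. This explains the $q\mapsto q^2$ appearing in every $h_i$ and accounts for the assertion in Proposition~\ref{prop:main}(2) that only even-degree orbi-spheres contribute here.

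Next I would match the orbi-insertions, using that the deck group of $\pi$ is $\Z_2$, which swaps $w_1',w_2'$ and fixes $w_3'$. Thus $w_2$ (carrying $\db{1}{3}$) has the two preimages $w_1',w_2'$ (carrying $\da{1}{3}$ and $\db{1}{3}$ on $\PT$), while $w_3$ (carrying $\dc{2}{6}$) has the single preimage $w_3'$ (carrying $\dc{1}{3}$). For $h_7$ the two lifts of a given $u$ therefore carry $\da{1}{3},\da{1}{3},\da{1}{3}$ and $\db{1}{3},\db{1}{3},\db{1}{3}$, and by the symmetry of $\PT$ these two invariants coincide, so the number of $u$ in degree $2d$ equals the number of degree-$d$ spheres counted by $f_1^{\PT}$, giving $h_7=f_1^{\PT}(q^2)$. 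For $h_6$ the two lifts are the two labellings of $\langle\da{1}{3},\db{1}{3},\dc{1}{3}\rangle^{\PT}$ obtained by exchanging the two $w_2$-preimages, again one $u$ per degree-$d$ $\PT$-sphere, so $h_6=f_0^{\PT}(q^2)$. For $h_4$, however, both lifts carry $\dc{1}{3},\dc{1}{3},\dc{1}{3}$ (since $w_3'$ is fixed), so each $u$ corresponds to two lifts of the \emph{same} invariant type and the count is halved, giving $h_4=\tfrac12 f_1^{\PT}(q^2)$. Finally for $h_5$ any lift would be of type $\langle\,\cdot\,,\dc{1}{3},\dc{1}{3}\rangle^{\PT}$, with two equal and one distinct insertion; by the dichotomy in Section~\ref{subsec:compP1333} the only patterns occurring on $\PT$ are the fully distinct $\langle\da{1}{3},\db{1}{3},\dc{1}{3}\rangle$ and the fully equal $\langle\da{1}{3},\da{1}{3},\da{1}{3}\rangle$, so no such $u$ exists and $h_5=0$. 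The constant terms $\tfrac16$ of $h_4$ and $\tfrac13$ of $h_7$ are supplied separately by the constant orbi-spheres at $w_3$ and $w_2$ exactly as in Section~\ref{subsec:contriconst}, the isotropy orders $6$ and $3$ producing $\tfrac16$ and $\tfrac13$; $h_5$ and $h_6$ receive no constant term, since their insertions sit at distinct cone points.

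The one genuinely delicate point is that the ``$1{:}2$'' of Lemma~\ref{lem:PT_lifting} must be reconciled with the orbifold automorphism weights, so that the raw map-counts translate into correctly weighted Gromov-Witten numbers rather than mere cardinalities. The numerical coefficient is affected only for $h_4$, where both lifts land in the same invariant type and yield the extra factor $\tfrac12$; for $h_6$ and $h_7$ I must verify that the two lifts genuinely split into the two distinct (respectively relabelled) types and never coincide, which is precisely what forbids such a factor there. Once this bookkeeping is secured, substituting the expansions of $f_0^{\PT}$ and $f_1^{\PT}$ from \eqref{eq:f0f1} produces the displayed $q$-series.
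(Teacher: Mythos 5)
Your proposal is correct and follows essentially the same route as the paper: lifting type-(b) orbi-spheres through the $2$-fold cover $\pi:\PT\to\PH$ of Lemma \ref{lem:PT_lifting}, matching insertions via the deck transformation that swaps $w_1'$ and $w_2'$, doubling the degree (hence $q\mapsto q^2$), and using the one-to-two correspondence together with the all-equal/all-distinct dichotomy on $\PT$ to get the factors $\tfrac12$, $1$, $1$ and the vanishing of $h_5$. Your extra remarks on the area computation behind $q\mapsto q^2$ and on the constant terms only make explicit what the paper leaves implicit.
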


\subsubsection*{Case {\rm (c)} with the domain orbi-sphere $\PP^1_{3,6,6}$}
For these kind of contributions, the lifting of holomorphic orbi-spheres on the universal cover level is no longer a linear map, since the domain orbi-sphere is hyperbolic. Hence we can not use our classification argument any more. However, we may try to find such maps directly by looking at their image on the universal cover $\C$ of the target $\PH$.

For this, we consider rhombi in the universal covering of $\PH$ whose vertices lie in the $p^{-1}(w_1, w_2, w_3)$. For example, observe that the rhombus $v$ whose set of vertices are $\left\{0, \frac{2+\tau}{3}, 1, \frac{1-\tau}{3} \right\}$ gives one of contribution from $\PP^1_{3,6,6}$ to $\langle \Delta^{2/3}_2, \Delta^{1/6}_3,\Delta_2^{1/6} \rangle$. (See the rightmost Rhombus in Figure \ref{fig:366conj}.) One can visualize this holomorphic orbi-sphere by folding this rhombus along its longer diagonal. Pairs of identified edges after this process are drawn in Figure \ref{fig:366conj}.

\begin{figure}
\begin{center}
\includegraphics[height=2.4in]{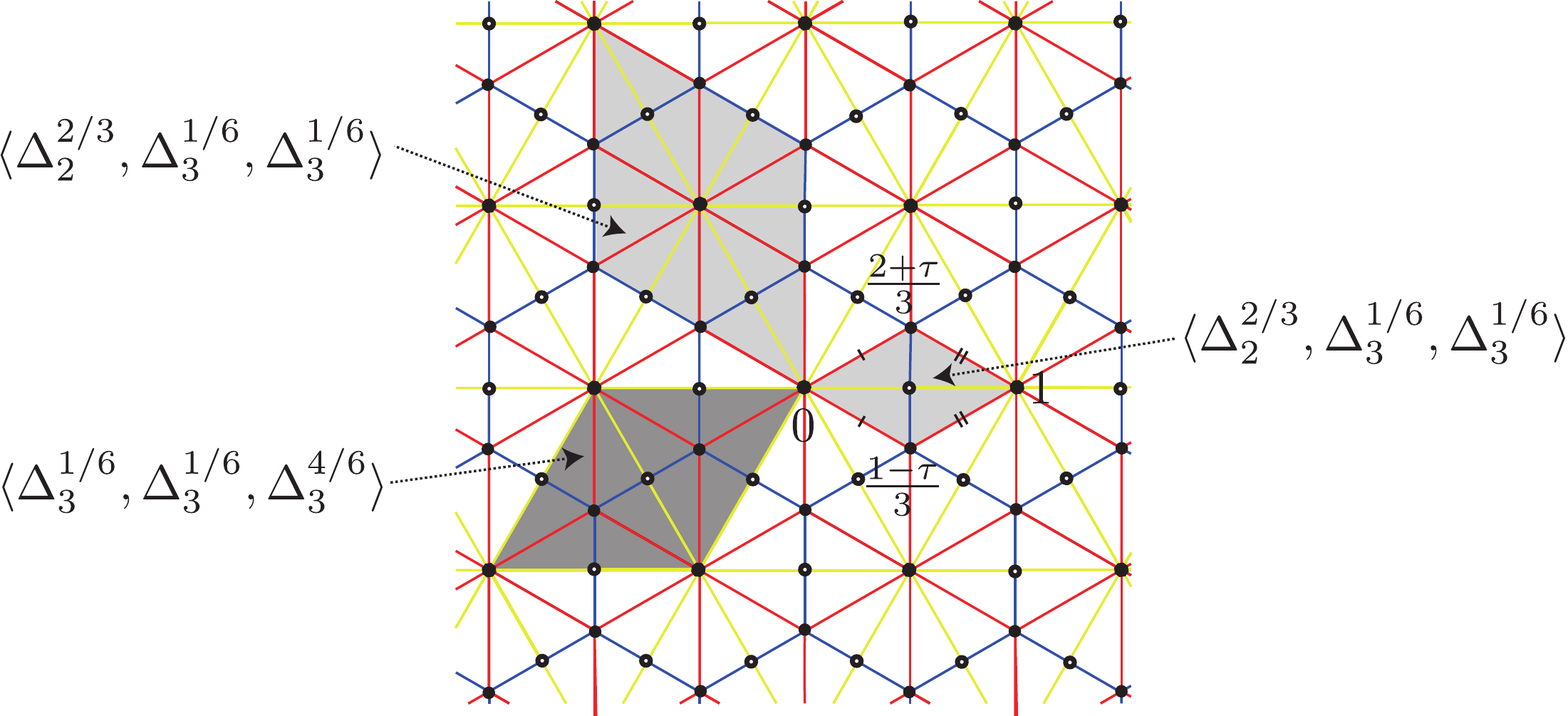}
\caption{Images of holomorphic orbi-spheres $\PP^1_{3,6,6} \to \PH$ visualized in the universal cover of $\PH$}\label{fig:366conj}
\end{center}
\end{figure}

There are various such rhombi, and their corresponding orbi-insertions can be classified in the following way. Note that these rhombi are images of the smallest rhombus $v$ given above by  linear maps $z \mapsto \lambda z$ for $\lambda \in \Z [\tau]$. (This means, we regard the vertices $\frac{2+ \tau}{3}$ and $\frac{1-\tau}{3}$ of $v$ as markings $z_2$ and $z_3$ of the order $6$ in the domain $\mathbb{P}^1_{3,6,6}$, respectively.)
Since $(1+\tau) \cdot \frac{1-\tau}{3} = \frac{2 + \tau}{3}$, insertions at $z_2$ and $z_3$ are the same, and if
$\lambda \cdot \left(\frac{2+\tau}{3}\right)$ is contained in $p^{-1}(w_3)$ (resp. $p^{-1}(w_2)$), the corresponding insertions are $\langle \Delta^{1/6}_3, \Delta^{1/6}_3, \Delta^{4/6}_3 \rangle$ (resp. $\langle \db{2}{3}, \dc{1}{6}, \dc{1}{6} \rangle$). Recall that $h_8$ counts $\langle \Delta^{1/6}_3, \Delta^{1/6}_3, \Delta^{4/6}_3 \rangle$ whereas $h_9$ counts $\langle \db{2}{3}, \dc{1}{6}, \dc{1}{6} \rangle$.

Using the identity $\lambda \cdot \frac{2 + \tau}{3} = \frac{2a-b}{3} + \frac{a + b}{3} \tau$ and proceeding as in case (a), we have
$$\lambda \cdot \frac{2 + \tau}{3} \in p^{-1}(w_3) \iff 3 \mid N (= a^2 - ab + b^2)$$
and
$$\lambda \cdot \frac{2 + \tau}{3} \in p^{-1}(w_2 ) \iff 3 \nmid N (= a^2 - ab + b^2)$$
It is easy to see that six rhombi related by $\Z_6$-rotation at the origin represent the same map, and the degrees of these rhombi are also given by $|\lambda|^2$. Comparing with the decomposition of $F$ in terms of $q$-th power (mod 3) as in Section \ref{subsec:compP1333}, it follows that
$h_8 (q) = \frac{1}{6} F_{0,3}(q^2)$ and $h_9 (q) = \frac{1}{6} F_{1,3}(q^2)$, if one can prove that there are {\em no} other contributions.

\begin{conjecture}\label{conj:P1366}
We conjecture that there are no contributions from $\PP^1_{3,6,6,}$ other that these rhombi, or equivalently,
\begin{align*}
	h_8 (q) &= \frac{1}{6} F_{0,3}(q^2) = \frac{1}{2} f^{\PT}_1 (q^2) \\
			&= \frac{1}{6} + q^6 + q^{18} + q^{24} + 2 q^{42} + q^{54} + q^{72} + 2 q^{78} + O(q^{96}),\\
	h_9 (q) &= \frac{1}{6} F_{1 ,3}(q^2) = f^{\PT}_0 (q^2) \\
			&= q^2 + q^8 + 2 q^{14} + 2 q^{26} + q^{32} + 2 q^{38} + O(q^{48}).
\end{align*}
\end{conjecture}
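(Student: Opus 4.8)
The plan is to classify \emph{directly} all holomorphic orbi-spheres $u : \PP^1_{3,6,6} \to \PH$ whose insertions are of type (c). Once completeness of such a classification is established, the numerology $h_8 = \frac16 F_{0,3}(q^2)$, $h_9 = \frac16 F_{1,3}(q^2)$ will follow from the lattice bookkeeping already carried out in the discussion preceding the conjecture. The linear-lift method used for types (a) and (b) is unavailable here because the domain $\PP^1_{3,6,6}$ is hyperbolic, so a lift $\widetilde u$ to the universal cover is a map out of the disc and need not be affine. I would replace it by a two-part argument: a Riemann--Hurwitz rigidity step pinning down the ramification of the underlying branched cover $|u| : \PP^1 \to \PP^1$ completely, followed by a flat-geometry step identifying the pulled-back flat structure on the domain with a folded lattice rhombus.

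For the first step I would run the Riemann--Hurwitz argument of Lemma \ref{lem:u_orb_cover} in the present situation. For $\langle \db{2}{3},\dc{1}{6},\dc{1}{6}\rangle$: over $w_1$ (order $2$) every preimage is a smooth domain point of even local degree; over $w_2$ (order $3$) the order-$3$ marking has local degree $\equiv 2 \pmod 3$ and the remaining preimages have degree in $3\Z$; over $w_3$ (order $6$) the two order-$6$ markings have local degree $\equiv 1\pmod 6$ and the rest lie in $6\Z$. Writing $s_i$ for the number of smooth preimages over $w_i$ and $B\ge 0$ for the branching of $|u|$ away from $\{w_1,w_2,w_3\}$, Riemann--Hurwitz gives $s_1+s_2+s_3 = d-1+B$, whereas minimality of the local degrees forces
\[
s_1+s_2+s_3 \;\le\; \frac{d}{2}+\frac{d-2}{3}+\frac{d-2}{6}=d-1 .
\]
Hence $B=0$ and every inequality is an equality, exactly as $a_i=0$ and $e_j=f_k=g_l=1$ were forced in Lemma \ref{lem:u_orb_cover}: $|u|$ is branched only over $w_1,w_2,w_3$, the markings carry the minimal local degrees, and $d\equiv 2\pmod 6$. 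The case $\langle\dc{1}{6},\dc{1}{6},\dc{4}{6}\rangle$ is identical and yields $d\equiv 0\pmod 6$ with the analogous profile.

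For the second step I would equip $\PH=[E/\Z_6]$ with the flat cone metric descending from $E=\C/\Z\langle 1,\tau\rangle$ and pull it back by $u$. Because a preimage of local degree $e$ over an order-$n$ cone point acquires cone angle $2\pi e/n$, the forced profile of Step one shows that the pulled-back metric is a flat cone metric on the domain $\PP^1$ with cone angles exactly $\{4\pi/3,\pi/3,\pi/3\}$ at the three markings and $2\pi$ (smooth) everywhere else. By the uniqueness of a flat cone metric on $S^2$ with three prescribed cone angles obeying Gauss--Bonnet (Troyanov), this flat sphere is rigid, and folding the base rhombus $v$ of Figure \ref{fig:366conj} along its long diagonal realizes precisely these angles. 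Developing the flat structure into $\C$ with holonomy in the deck group of $p:\C\to\PH$ (the $(2,3,6)$ group $\Z\langle 1,\tau\rangle\rtimes\Z_6$) then unfolds the domain to a lattice rhombus $\lambda\cdot v$ with $\lambda=a+b\tau\in\Z[\tau]$ and $d=2|\lambda|^2=2(a^2-ab+b^2)$. The dichotomy $3\mid(a+b)$ versus $3\nmid(a+b)$ separates the two insertion types exactly as in Section \ref{subsec:compP1333}, producing $h_8=\frac16 F_{0,3}(q^2)$ and $h_9=\frac16 F_{1,3}(q^2)$ after dividing by the $\Z_6=\langle 1+\tau\rangle$ symmetry, the constant term of $h_8$ arising from the constant map as in Subsection \ref{subsec:contriconst}.

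The \textbf{main obstacle}, and the reason the authors leave this open, is the completeness and bijectivity of this correspondence. One must show that \emph{every} holomorphic $u$ realizing the Riemann--Hurwitz data of Step one arises by developing the unique flat cone sphere into the lattice, and that distinct placements $\lambda$ (modulo $\Z_6$) give distinct orbi-maps with no further identifications; equivalently, the Hurwitz-type count of branched covers with the forced profile must equal the lattice count, with the developing holonomy automatically compatible with the $\Z_6$-quotient. The flat-geometry route is designed precisely to bypass the hyperbolic domain that obstructs the linear-lift method, but turning the chain ``rigid flat cone sphere $\Leftrightarrow$ lattice rhombus $\Leftrightarrow$ solution of $a^2-ab+b^2=d/2$'' into an honest bijection, and verifying the multiplicity $\tfrac16$, is where the genuine work lies.
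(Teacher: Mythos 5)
The statement you are proving is stated in the paper as a \emph{conjecture}: the authors only exhibit the folded rhombi, read off their insertions from the lattice, and then check the resulting formulas for $h_8$ and $h_9$ numerically against the Frobenius relation \eqref{eq:numcheck}; they explicitly leave open the possibility of ``other contributions.'' So there is no proof in the paper to compare against, and your proposal should be judged as an independent attempt to close that gap. Your route is genuinely different from anything in the paper and, in its first step, goes strictly further: the Riemann--Hurwitz rigidity argument is correct. For $\langle \db{2}{3},\dc{1}{6},\dc{1}{6}\rangle$ the minimal local degrees are $2$ at non-marked preimages of $w_1$, $2$ at the order-$3$ marking and $3$ at non-marked preimages of $w_2$, and $1,1$ at the order-$6$ markings and $6$ at non-marked preimages of $w_3$; your bound $s_1+s_2+s_3\le d/2+(d-2)/3+(d-2)/6=d-1$ against the Riemann--Hurwitz count $d-1+B$ does force $B=0$, all local degrees minimal, and $d\equiv 2\pmod 6$ (respectively $d\equiv 0 \pmod 6$ for $\langle\dc{1}{6},\dc{1}{6},\dc{4}{6}\rangle$). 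This is a real rigidity statement that the paper does not establish, and it is exactly the right substitute for condition \eqref{eq:orb-insert}, which fails here.

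That said, your proposal does not prove the conjecture, and you say so yourself: the second step is a plan rather than an argument. The pulled-back flat cone metric does have angles $4\pi/3,\pi/3,\pi/3$ (Gauss--Bonnet checks: the total deficit is $4\pi$), and identifying it with the double of the $(2\pi/3,\pi/6,\pi/6)$ triangle is correct; but the claim that \emph{every} such $u$ develops onto a lattice rhombus $\lambda\cdot v$ with $\lambda\in\Z[\tau]$, that the holonomy automatically lies in $\Z[\tau]\rtimes\Z_6$, and that the map $\lambda\mapsto u$ is exactly six-to-one with no extra identifications, is precisely the content of the conjecture restated in flat-geometry language. Until that bijection is written out (developing one triangular face, pinning its vertices to the orbits $p^{-1}(w_i)$ of \eqref{eq:lattice_PH}, and checking the edge identifications are realized by deck transformations), the count $h_8=\tfrac16F_{0,3}(q^2)$, $h_9=\tfrac16F_{1,3}(q^2)$ remains conditional. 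It is also worth noting that the paper's relation \eqref{eq:numcheck} gives one linear constraint on the pair $(h_8,h_9)$ with all other coefficients known; a second independent WDVV relation would determine both algebraically and is the route taken in \cite{ST} and \cite{KS}, so it would be a useful cross-check on, or alternative to, completing your geometric argument.
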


%

There is a nontrivial algebraic relation between $h_8(q)$ and $h_9(q)$ which basically comes from the Frobenius structure on $QH^\ast_{orb} (\PH;\Q)$. This can be obtained as follows: firstly,
\begin{equation}\label{eq:Frob366}
\begin{array}{rcl}
h_8 &=& \left( \Delta^{1/6}_3 \ast \Delta^{1/6}_3, \Delta^{4/6}_3 \right) \\
&=& \left( \Delta^{1/6}_3 \ast \Delta^{1/6}_3, \frac{1}{6}(h_6)^{-1} \Delta_2^{1/3} \ast \Delta_2^{1/3} -  \frac{1}{2} (h_6)^{-1} h_7 \Delta_2^{2/3} \right) \\
&=& - \frac{1}{2} (h_6)^{-1} h_7 h_9   + \frac{1}{6}(h_6)^{-1}  \left( \Delta^{1/6}_3 \ast \Delta^{1/6}_3,  \Delta_2^{1/3} \ast \Delta_2^{1/3}  \right) 
\end{array}
\end{equation}
where $\left( \,\, , \,\, \right)$ is the Poincar\`e paring and in the second equality, we used 
$$ \Delta_2^{1/3} \ast \Delta_2^{1/3} = 6 h_6 \Delta_{3}^{4/6} + 3 h_7 \Delta_2^{2/3}$$
which is completely known from cases (b) and (c).

The last term in \eqref{eq:Frob366} can be computed with the help of the Frobenius structure:
\begin{eqnarray*}
\left( \Delta^{1/6}_3 \ast \Delta^{1/6}_3, \Delta_2^{1/3} \ast \Delta_2^{1/3}  \right) &=& \left( \Delta^{1/6}_3, \Delta_3^{1/6} \ast \Delta_2^{1/3} \ast \Delta_2^{1/3}  \right) \\
&=& \left( \Delta^{1/6}_3, ( 6 h_1 \Delta^{3/6}_3 + 2 h_0 \Delta^{1/2}_1 ) \ast \Delta_2^{1/3}  \right) \\
&=& 6h_1 \left( \Delta^{1/6}_3, \Delta^{3/6}_3 \ast \Delta_2^{1/3} \right) + 2h_0 \left( \Delta^{1/6}_3, \Delta^{1/2}_1 \ast \Delta_2^{1/3} \right)\\
&=&  6 (h_1)^2 +2 (h_0)^2.
\end{eqnarray*}

Plugging it into \eqref{eq:Frob366}, we obtain the relation
\begin{equation}\label{eq:numcheck}
  6 h_6 h_8 = - 3 h_7 h_9 + 6  (h_1)^2 + 2  (h_0)^2.
\end{equation}

One can check \eqref{eq:numcheck} numerically up to a higher enough order using Mathematica with our conjectural $h_8$ and $h_9$.

\begin{remark}
A similar kind of lifting argument as in case (b) tells us that $(3,6,6)$-contributions for $\PT$ is equivalent to a certain kind of $4$-fold Gromov-Witten of $\PT$ which counts holomorphic orbi-spheres $\PP^1_{3,3,3,3} \to \PT$.
\end{remark}

\subsection{The product on $QH^\ast_{orb} (\PQ)$}
Let $E'$ be the elliptic curve associated with the lattice $\Z \langle 1, i \rangle$, where $i = \sqrt{-1}$. (In fact, $E$ and $E'$ are isomorphic as symplectic manifolds.) Then the quotient of $E'$ by the $\Z_4$-action which is generated by the $i(=\sqrt{-1})$-multiplication is the elliptic orbifold projective line $\PQ = [E' / \Z_4]$ with three singular points $w_1, w_2, w_3$. Here, $w_1$ is the point with the local group isomorphic to $\Z_2$, and $w_2, w_3$ have local groups isomorphic to $\Z_4$.

The inertial orbifold $\mathcal{I}\PQ$ consists of the smooth sector together with a $B\Z_2$ and two $B\Z_4$'s.
As usual, the $\Q$-basis of $H^*_{orb}(\PQ, \Q)$ is taken as 
$$1, \da{1}{2}, \db{1}{4}, \db{2}{4}, \db{3}{4}, \dc{1}{4}, \dc{2}{4}, \dc{3}{4}, \pt$$
Then the cohomology of the smooth sector is given by
\begin{align*}
&H^0_{orb}(\PQ, \Q) = \Q \cdot 1, \quad \quad \quad H^2_{orb}(\PQ, \Q) = \Q \cdot \pt.
\end{align*}
For twist sectors, $\da{1}{2} \in H^1_{orb}(\PQ, \Q)$, $\Delta_k^{j/4} \in H^{\frac{2j}{4}}_{orb}(\PH, \Q)$($j=1,2,3$, $k=2, 3$)  are generators supported at singular points $w_1, w_2, w_3$, respectively.
In a similar way with $\PH$ case, we classify all the triple orbi-insertions with expected dimension $0$ and their domain orbifolds.

\begin{enumerate}[(a)]
	\item $\PQ$ : $\langle \da{1}{2}, \Delta_j^{1/4}, \Delta_k^{1/4} \rangle$, $\langle \Delta_j^{2/4}, \Delta_j^{1/4}, \Delta_k^{1/4} \rangle$, \\$\langle \Delta_j^{2/4}, \Delta_k^{1/4}, \Delta_k^{1/4} \rangle$ for $j, k = 2, 3$.
	\item $\PP^1_{2,2}$ : $\langle 1, \da{1}{2}, \da{1}{2} \rangle$, $\langle 1, \da{1}{2}, \Delta_k^{2/4} \rangle$, $\langle 1, \Delta_j^{2/4}, \Delta_k^{2/4} \rangle$ for $j, k = 2,3$.
	\item $\PP^1_{4,4}$ : $\langle 1, \Delta_j^{1/4}, \Delta_k^{3/4} \rangle$ for $j, k = 2,3$
\end{enumerate}

Again, $(b)$ and $(c)$ do not occur because of Lemma \ref{lem:nodal}.
If we denote $\bt := \sum t_{j,i} \Delta^i_j$, the genus-0 Gromov-Witten potential of $\PQ$ is written up to order of $t^3$ as
\begin{equation}\label{eq:3GWpot244}
\begin{array}{rl}
F^{\PQ}_{0} (\bt) &= \frac{1}{2} t_0^2 \log q +  \frac{1}{2} t_{0} \ta{1}{2} \ta{1}{2} \\
&+ \frac{1}{4} t_{0} \tb{2}{4} \tb{2}{4} + \tb{1}{4} \tb{3}{4} + \tc{2}{4} \tc{2}{4} + \tc{1}{4} \tc{3}{4}
+ \frac{1}{2} \ta{1}{2} ( \tb{1}{4}^2 + \tc{1}{4}^2 ) \cdot g_0 (q)\\
& + \ta{1}{2} \tb{1}{4} \tc{1}{4} \cdot g_1 (q) + \frac{1}{2} ( \tb{2}{4} \tb{1}{4}^2 + \tc{2}{4} \tc{1}{4}^2 ) \cdot g_2 (q) \\
&+ \frac{1}{2} (\tb{2}{4} \tc{1}{4}^2 + \tc{2}{4} \tb{1}{4}^2) \cdot g_3 (q) + ( \tb{2}{4} \tb{1}{4} \tc{1}{4} + \tc{2}{4} \tc{1}{4} \tb{1}{4} ) \cdot g_4 (q) + O(t^4)
\end{array}
\end{equation}

The classification in (a) shows that the domain orbi-sphere should have the same orbifold structure, that is, the contributions only come from maps $\PQ \to \PQ$.
Let $p : \C \to \PQ$ be the universal covering which factors through the $\Z_4$-quotient map $E' \to \PQ$. We abuse the notation $p$ for covering maps of both the domain and the target $\PQ$.
From the obvious symmetry between $w_2$ and $w_3$, we may fix one of the orbi-insertions by $\db{1}{4}$, similarly to what we did for $\PT$ case. So, we assume that our holomorphic orbi-sphere sends $z_2$ to $w_2$.

As before, any holomorphic orbi-sphere $u$ in our concern can be lifted to a linear map $\tilde{u} :  z \to \lambda z$ by Lemmas \ref{lem:u_orb_cover} and \ref{lem:reg}.
We set the lattice structure on $\C$ induced by the covering $\C \to \PQ$ for both the domain and the target as follows:
\begin{align*}
p^{-1} (w_1) &= \left\{ \left. \frac{1}{2} (a+ib) \right| \text{either $a$ or $b$ is an odd number, but not both.} \right\},\\
p^{-1} (w_2) &= \Z \langle 1, i \rangle \,\,(\ni 0),	\\
p^{-1} (w_3) &= \left\{ \left. \frac{1}{2} (a+ib) \right| \text{both $a$ and $b$ are odd numbers} \right\}.
\end{align*}
Here, we think of $w_2$ as the base point associated with the universal cover $(\C,0)$.
(See Figure \ref{fig:244lattice}.)

\begin{figure}
\begin{center}
\includegraphics[height=3in]{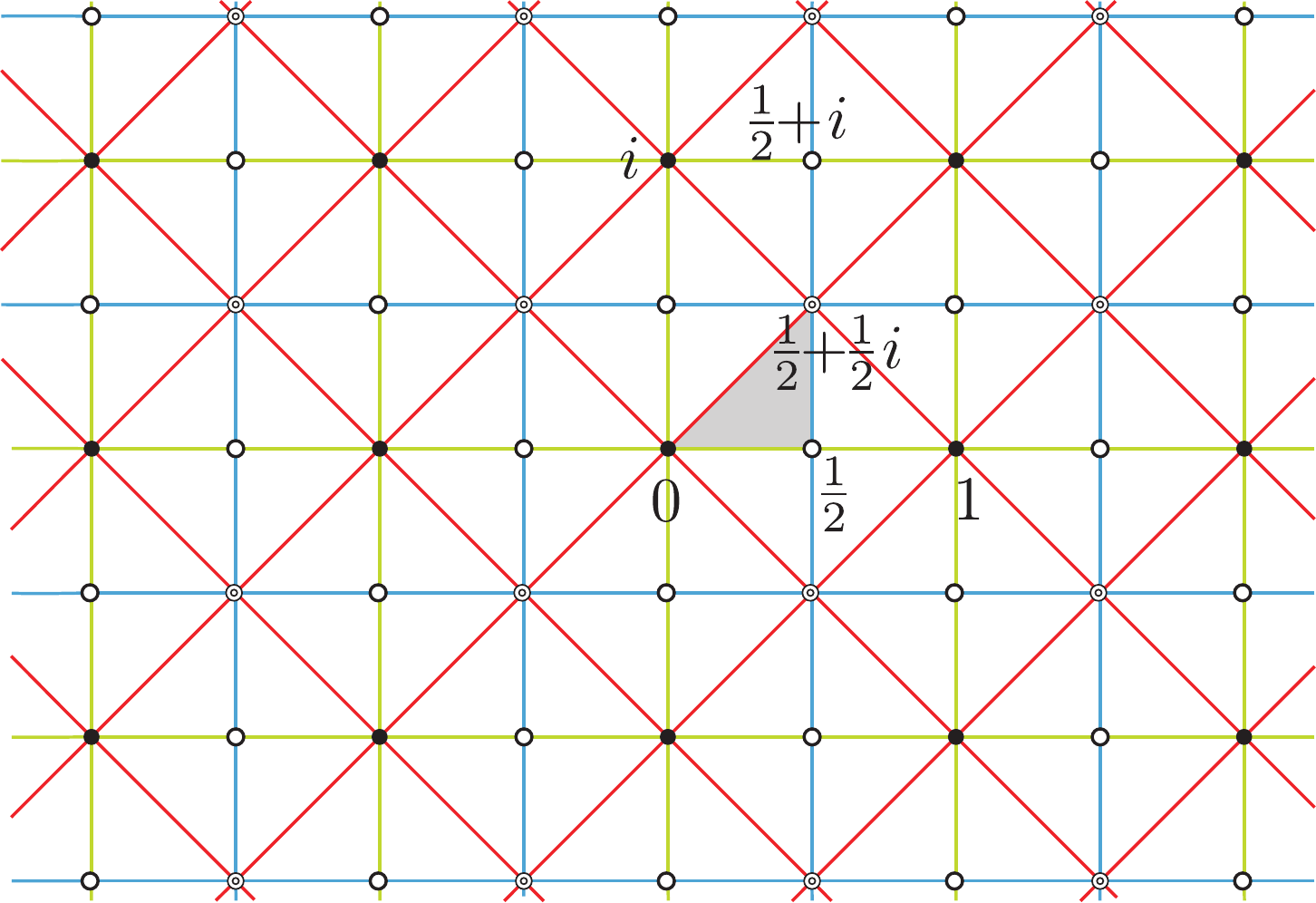}
\caption{Lattices on the universal cover of $\PQ$ : $p^{-1} (w_1)=\{ \circ \}$, $p^{-1} (w_2)= \{\bullet\}$ and $p^{-1}(w_3)=$ $\{${\tiny $\circledcirc$}$\}$}\label{fig:244lattice}
\end{center}
\end{figure}

Since we have assumed that the orbifold singular point $z_2$ is mapped to $w_2$, the lifting $\WT{u}$ of $u : \PQ \to \PQ$ maps $p^{-1} (z_2)$ to $p^{-1} (w_2)$ fixing the origin.
Therefore, $\WT{u}(z) = \lambda z$ for some $\lambda \in \Z[i]$. As mentioned, the degree of a holomorphic orbi-sphere $u$ is $|\lambda|^2 = a^2 + b^2$ if the lifting of $u$ is $\WT{u}(z) = \lambda z$ with $\lambda = a + b i$.
Let $G(q)$ denote the power series
\begin{equation}
G(q) = \sum_{a,b \in \Z} q^{a^2 + b^2}.
\end{equation}
See \eqref{eq:FandG} for first few terms of $G$.
Note that if we divide $a^2 + b^2$ by $4$, then $3$ can not appear as a remainder for any $a, b \in \Z$.
Thus, we can decompose $G$ into $G = G_{0,4} + G_{1,4} + G_{2,4}$ in accordance with the exponent of $q$ modulo $4$ .

We determine the orbi-insertion for each $\lambda = a + b i$ by the same way as before. Note that the right-angled isosceles triangle whose vertices $\{ 0, \frac{1}{2}, \frac{1+i}{2} \}$ is one of the fundamental domain of the upper-hemisphere of $\PQ$. (See the shaded region in Figure \ref{fig:244lattice}.)

Observe that the two marked points other than the origin in this fundamental domain map to
\begin{equation*}
(a + bi)\cdot \frac{1}{2} = \frac{a}{2} + \frac{b}{2}i \quad \mbox{and} \quad (a+bi)\cdot \frac{1+i}{2} = \frac{a-b}{2} + \frac{a+b}{2} i
\end{equation*}
by the linear map $z \mapsto (a+bi) z$. By proceeding as in the case of $\PH$, we see that there are only three possibilities of the type of insertions, which are listed as follows:
\begin{enumerate}[(i)]
\item
``$ \frac{a}{2} + \frac{b}{2} \in p^{-1} (w_1)$ and $\frac{a-b}{2} + \frac{a+b}{2} \in p^{-1} (w_3)$" if and only if $a^2 + b^2 \equiv 1 \mod 4$
\item ``$\frac{a}{2} + \frac{b}{2} \in p^{-1} (w_2)$ and $\frac{a-b}{2} + \frac{a+b}{2} \in p^{-1} (w_2)$" if and only if $a^2 + b^2 \equiv 0 \mod 4$
\item ``$\frac{a}{2} + \frac{b}{2} \in p^{-1} (w_3)$ and $\frac{a-b}{2} + \frac{a+b}{2} \in p^{-1} (w_2)$" if and only if $a^2 + b^2 \equiv 3 \mod 4$
\end{enumerate}
By definition of coefficients $g_i$ in \eqref{eq:3GWpot244}, holomorphic orbi-spheres with insertions (i), (ii) and (iii) precisely give rise to $g_1$, $g_2$ and $g_3$, respectively. Therefore, we conclude that $g_0 (q) = g_4 (q) =  0$, and
\begin{equation*}
\begin{array}{rl}
g_1 (q) &=  \dfrac{1}{4} \displaystyle\sum_{\substack{N =1 \\ N \equiv 1 \text{ mod } 4}}^{\infty} \sum_{\substack{a^2 + b^2 = N \\ a,b\in \Z}} q^N = \frac{1}{4} G_{1,4},\\
g_2 (q) &=  \dfrac{1}{4} \displaystyle\sum_{\substack{N =1 \\ N \equiv 0 \text{ mod } 4}}^{\infty} \sum_{\substack{a^2 + b^2 = N \\ a,b\in \Z}} q^N ,\\
g_3 (q) &=  \dfrac{1}{4} \displaystyle\sum_{\substack{N =1 \\ N \equiv 2 \text{ mod } 4}}^{\infty} \sum_{\substack{a^2 + b^2 = N \\ a,b\in \Z}} q^N.
\end{array}
\end{equation*}
Again, $\frac{1}{4}$ is due to the $\Z_4$-symmetry at the origin in the universal cover $\C$ (from the action of the local group at $w_2$) which is generated by the $i$-multiplication.

\section{Appendix : Theta series}
Recall that our results were expressed in terms of the following two power series
\begin{equation}\label{eq:FandG}
\begin{array}{rcl}
F(q) &=& \sum_{a,b \in \Z} q^{a^2 - ab + b^2} \\
&=& 1 + 6 q + 6 q^3 + 6 q^4 + 12 q^7 + 6 q^9 \\
&& + 6 q^{12} + 12 q^{13} + 6 q^{16} + 12 q^{19} + 12 q^{21} + O(q^{24}), \\
G(q) &=& \sum_{a,b \in \Z} q^{a^2 + b^2}\\
&=& 1+ 4q + 4q^2 + 4q^4 + 8 q^5 + 4 q^8 \\
&& + 4q^9 + 8 q^{10} + 8 q^{13} + 4 q^{16} + 8 q^{17} + 4 q^{18} + o(q^{20}).
\end{array}
\end{equation}
In this section, we briefly explain several number theoretic feature of $F$ and $G$. (For more details, see \cite[Chapter 4]{B} or \cite{G}.) We first provide a description of Fourier coefficients of $F$ and $G$.

\begin{prop} Write $F(q) = \sum_{N \geq 0} a_N q^N$ and $G(q) = \sum_{N \geq 0} b_N q^N$. Then
$$a_N=6 ( d_{1/3} (N) - d_{2/3} (N)),$$
$$b_N =  4 (d_{1/4} (N) - d_{3/4} (N) )$$
where $d_{j/3} (N)$ denotes the number of divisors of $N$ which are $j$ modulo $3$, and $d_{j/4} (N)$ the number of divisors of $N$ which are $j$ modulo $4$
\end{prop}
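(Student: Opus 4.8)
The plan is to identify $a_N$ and $b_N$ as the numbers of representations of $N$ by the norm forms of the Eisenstein integers $\Z[\tau]$ and the Gaussian integers $\Z[i]$, and then to evaluate these representation numbers through unique factorization in these two rings. Since $\tau+\overline{\tau}=-1$ and $\tau\overline{\tau}=1$, one has $a^2-ab+b^2=(a+b\tau)(a+b\overline{\tau})=N(a+b\tau)$, so $a_N$ is exactly the number of elements of $\Z[\tau]$ of norm $N$; likewise $a^2+b^2=N(a+bi)$, so $b_N$ is the number of elements of $\Z[i]$ of norm $N$. Both rings are Euclidean, hence principal ideal domains, with unit groups $\Z[\tau]^\times=\{\pm1,\pm\tau,\pm\tau^2\}$ of order $6$ and $\Z[i]^\times=\{\pm1,\pm i\}$ of order $4$.

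For $N\geq 1$, every element of norm $N$ generates a nonzero ideal of norm $N$, and since the ring is a domain the generators of a fixed principal ideal are precisely its $|\mathcal{O}^\times|$ unit multiples, all of the same norm. As the ring is a PID, every ideal of norm $N$ is principal, so the number of elements of norm $N$ equals $|\mathcal{O}^\times|$ times the number of integral ideals of norm $N$. Writing $A(N)$ and $B(N)$ for the ideal counts in $\Z[\tau]$ and $\Z[i]$, this gives $a_N=6\,A(N)$ and $b_N=4\,B(N)$, which already accounts for the prefactors $6$ and $4$.

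Next I would evaluate $A$ and $B$, which are multiplicative and governed by the splitting of rational primes. For $\Z[\tau]=\mathcal{O}_{\Q(\sqrt{-3})}$ the relevant character is the nontrivial Dirichlet character $\chi_{-3}$ modulo $3$, with $\chi_{-3}(n)=1,-1,0$ according as $n\equiv 1,2,0\pmod 3$; a prime splits, is inert, or ramifies exactly as $\chi_{-3}(p)=1,-1,0$. The prime-by-prime count then yields $A(N)=\sum_{d\mid N}\chi_{-3}(d)$, equivalently the factorization $\zeta_{\Q(\sqrt{-3})}(s)=\zeta(s)L(s,\chi_{-3})$. Since $\chi_{-3}(d)$ is $+1$ on divisors $\equiv 1\pmod 3$, $-1$ on divisors $\equiv 2\pmod 3$, and $0$ on multiples of $3$, this sum is precisely $d_{1/3}(N)-d_{2/3}(N)$. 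Running the identical argument for $\Z[i]=\mathcal{O}_{\Q(i)}$ with the character $\chi_{-4}$ modulo $4$ (so $\chi_{-4}(n)=1,-1,0$ for $n\equiv 1,3\pmod 4$ and $n$ even) gives $B(N)=\sum_{d\mid N}\chi_{-4}(d)=d_{1/4}(N)-d_{3/4}(N)$. Combining with the previous paragraph yields the two asserted formulas for $N\geq 1$, while $a_0=b_0=1$ is the constant term handled directly.

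The main obstacle is the prime-power evaluation establishing $A(N)=\sum_{d\mid N}\chi_{-3}(d)$ and its analogue for $B$: one must verify case by case that the number of ideals of norm $p^e$ matches $\sum_{j=0}^{e}\chi_{-3}(p)^j$, with the split, inert, and ramified primes treated separately. The ramified primes $3$ (for $\Z[\tau]$) and $2$ (for $\Z[i]$) demand the most care, since there a single prime ideal of norm $p$ occurs and one must check that it contributes consistently with $\chi(p)=0$. Once multiplicativity and these three local computations are in place, the global identity and hence the proposition follow.
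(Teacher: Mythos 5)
Your argument is correct, and it reaches the same two identities by a route that is organized differently from the paper's. The paper works entirely with elements: it fixes the prime factorization $N=3^f n_1 n_2$ in $\Z[\tau]$, writes out explicitly every way of splitting the prime factors between $\alpha$ and $\beta=\bar{\alpha}$ with $\alpha\beta=N$, counts these as $6\prod(r+1)$ after removing the redundancy coming from $(2+\tau)/(1-\tau)=1+\tau$ being a unit, and only at the very end matches this product against $6(d_{1/3}(N)-d_{2/3}(N))$; the Gaussian case is delegated to Grosswald. You instead insert the intermediate object of ideals: elements of norm $N$ are $|\mathcal{O}^\times|$ times the ideals of norm $N$ (using that both rings are PIDs with finite unit groups), and the ideal count is the Dirichlet convolution $\sum_{d\mid N}\chi(d)$ coming from the splitting of primes, i.e.\ from $\zeta_K=\zeta\cdot L(\cdot,\chi)$. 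The underlying input is identical — unique factorization in $\Z[\tau]$ and $\Z[i]$ together with the classification of primes by residue mod $3$ or mod $4$ — but your packaging buys three things: the unit bookkeeping is done once and cleanly (no hand manipulation of $(1-\tau)$ versus $(2+\tau)$), multiplicativity reduces everything to three routine local checks at split, inert, and ramified primes, and the $F$ and $G$ cases are handled uniformly rather than the latter being outsourced to a reference. The remaining gaps you flag (the prime-power verification of $A(p^e)=\sum_{j}\chi(p)^j$, and the convention at the ramified primes $3$ and $2$) are genuinely routine, so the sketch is complete in all essentials; the only caveat worth recording is that, as in the paper, the stated formulas are meant for $N\geq 1$, the constant term $a_0=b_0=1$ being checked separately.
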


\begin{proof}
We  only prove the first identity for $a_N$, and refer readers to \cite[Theorem 3]{G} for $G$. The following is a simple modification of the argument given in \cite{G}, but we repeat it here for completeness.

Recall that for $\tau= e^{2 \pi \sqrt{-1} / 3}$, $|a + b \tau|^2 = a^2 - ab + b^2$. This gives a structure of Euclidean domain in $\Z [ \tau]$ (this ring is usually called the ring of {\em Eisenstein integers} or {\em Eulerian integers}). In particular, $\Z [\tau]$ is a unique factorization domain, and hence a prime factorization in this ring makes sense up to units which are $=\{\pm1, \pm \tau, \pm \tau^2\} = \{ (1+\tau)^k \, | \, 0 \leq k \leq 5\}$ (and also up to the order of factors). It is known that a prime number in $\Z [\tau]$ is either a prime number in $\Z$ which is $2$ modulo $3$, or $a+b \tau$ whose modulus square $|a+b \tau|^2$ is a prime number in $\Z$. In latter case, $|a+b\tau|^2$ is alway $1$ modulo $3$ unless it is $3=(1-\tau)\overline{(1-\tau)}=(1-\tau)(2+\tau)$ itself. (Of course, a prime number multiplied with a unit is also prime.)

Note that finding solutions of 
\begin{equation}\label{eq:seeksol}
a^2 -ab + b^2 = (a+ b \tau) \overline{(a+b \tau)}  = N \in \Z
\end{equation}
is equivalent to finding factorizations $N=\alpha \beta$ of $N$ in $\Z[\tau]$ such that $\beta = \bar{\alpha}$ where $\bar{\alpha}$ is the complex conjugation of $\alpha$. Let $N=3^f n_1 n_2$ with $n_1 = \prod_{p \equiv1 ({\rm mod} 3)} p^r$ and $n_2 = \prod_{q \equiv 2 ({\rm mod} 3)} q^s$. Then the prime factorization of $N$ in $\Z [\tau]$ can be written as
$$N= \{ (1-\tau)(2+\tau) \}^f \prod_{\substack{c^2 -  cd + d^2 =p \\ p\equiv1 ({\rm mod} 3)}} \{(c+ d \tau) \overline{(c+d \tau)}\}^r \prod_{q \equiv 2 ({\rm mod} 3)} q^s$$
where $c+ d \tau$ and $\overline{c + d \tau}$ come in pair for each $p$ since $N$ is an integer. Now the condition $\beta=\bar{\alpha}$ forces them to be of the following forms:
\begin{equation}\label{eq:alphabetaufd}
\begin{array}{rl}
\alpha&= (1+ \tau)^{t} (1-\tau)^{f_1} (2+\tau)^{f_2} \prod  \{(c+ d \tau)^{r_1} \overline{(c+d \tau)}^{r_2}\} \prod q^{s_1} \\
\beta&= (1+\tau)^{-t} (1-\tau)^{f_2} (2+ \tau)^{f_1} \prod  \{(c+ d \tau)^{r_2} \overline{(c+d \tau)}^{r_1}\} \prod q^{s_2}
\end{array}
\end{equation}
with $0\leq t \leq 5$, $f_1 + f_2 =f$, $r_1 +r_2 = r$ and $s_1 +s_2=s$. $\bar{\beta} = \alpha$ also implies that $s_1 = s_2$, so there is no solution to \eqref{eq:seeksol} if $s$ is odd. Let us assume that $s$ is even from now on. Then $s_i$'s are uniquely determined (as the half of $s$).  Observe that $t$ has six choices and $f_1$, $r_1$ determine $f_2=f-f_1$, $r_2=r-r_1$ respectively. Thus, there are seemingly $6 (f+1) \prod (r+1)$ number of choice for $\alpha$ and $\beta$ satisfying \eqref{eq:alphabetaufd}. However, replacing one $(1-\tau)$ by $(2+\tau)$ for in the expression of $\alpha$ \eqref{eq:alphabetaufd} affects $\alpha$ by multiplying a unit since $(2+\tau)/(1-\tau) = 1 +\tau $ is a multiplicative generator of the group of units in $\Z [\tau]$. Getting rid of this redundancy, the number of pairs $(\alpha,\beta)$ satisfying $N= \alpha \beta$ and $\beta=\bar{\alpha}$ is given by $6 \prod (r+1)$. It is easy to check that this number is same to $6 ( d_{1/3} (N) - d_{2/3} (N) )$.
\end{proof}

\begin{remark}
From the proof, we see that ``$6$" in  the expression of $a_N$ is related to the number of units in the ring $\Z [\tau]$, which give the symmetries on the associated moduli space of orbi-spheres (see the last paragraph of Section \eqref{subsec:symm333}).
\end{remark}

We next describe $F$ and $G$ in terms of famous Jacobi theta functions.
The definitions of related Jacobi theta functions are given as follows.
\begin{definition}
The second and the third Jacobi theta functions are the power series $\theta_2$ and $\theta_3$ in $q$ which are defined as follows:
$$\theta_2(q) := \sum_{-\infty}^{\infty} q^{(n+1/2)^2},$$
$$\theta_3(q) := \sum_{-\infty}^{\infty} q^{n^2}.$$
\end{definition}

\begin{remark}
Originally, theta functions are two variable functions depending on $z$ and $q$. Above $\theta_i$ is indeed obtained by putting $z=0$.
\end{remark}
%
%

Let us now express $F(q)$ and $G(q)$ in terms of $\theta_i$'s ($i=2,3$). Firstly for $F$, observe that the number of integer solutions of $x^2 - xy + y^2 = N$ is equivalent to that of solutions of $(m^2  + 3n^2)/4 =N$. To see this, simply put $m=x+y$ and $n=x-y$ to $(m^2 + 3n^2)/4$. Note that $m$ and $n$ should have the same parity. Therefore,
\begin{eqnarray*}
F(q) &=& \sum_{x,y \in \Z} q^{x^2 - xy + y^2} \\
&=& \sum_{m,n : \,\,even } q^{\frac{m^2 + 3n^2}{4}}  + \sum_{m,n : \,\, odd } q^{\frac{m^2 + 3n^2}{4}} \\
&=& \sum_{k,l \in \Z} q^{k^2 + 3l^2} + \sum_{k,l \in \Z} q^{ ((k+1)/2)^2 + 3( (l+1)/2)^2} \\
&=& \theta_3 (q) \theta_3 (q^3) + \theta_2 (q) \theta_2 (q^3) 
\end{eqnarray*}
The expression of $G(q)$ is even simpler since
$$ G(q) = \sum_{x,y \in \Z} q^{x^2 + y^2} = (\theta_3 (q))^2.
$$

In general, the theta function associated with a binary quadratic form $Q(x,y)=ax^2 +bxy + cz^2$ is defined by
$$ \theta_Q (z)= \sum_{(x,y)\in \Z^2} \exp(2\pi izQ(x,y)) $$
where we have used the substitution $q = \exp({2 \pi i z})$ mostly in the paper. 
In the Fourier expansion,
$$\theta_Q (z) = \sum_{N=0}^\infty R_Q (N) \exp(2\pi i N z),$$
the numbers $R_Q (N)$ are called the representation numbers of the form $Q$, and hence $a_N$ and $b_N$ above are given as $R_F (N)$ and $R_G (N)$, respectively.

These theta functions are known to be modular forms of weight $1$ on (an appropriately defined subgroup of) the modular group. We believe that this modularity of $F$ and $G$ may help to compare our result with the one given in \cite{ST} or \cite{MR}.

\bibliographystyle{amsalpha}

\end{document}